\PassOptionsToPackage{unicode}{hyperref}
\PassOptionsToPackage{hyphens}{url}
\PassOptionsToPackage{dvipsnames,svgnames,x11names}{xcolor}
\PassOptionsToPackage{format=plain,
  labelfont={bf,sf,small,singlespacing},
  textfont={sf,small,singlespacing},
  justification=justified,
  margin=0.25in}{caption}%
\documentclass[
  11pt,
]{article}
\usepackage{xcolor}
\usepackage[lmargin=1in,rmargin=1in,tmargin=1in,bmargin=1in]{geometry}
\usepackage{amsmath,amssymb}
\usepackage{iftex}
\ifPDFTeX
  \usepackage[T1]{fontenc}
  \usepackage[utf8]{inputenc}
  \usepackage{textcomp} 
    \usepackage[p,osf,swashQ,scale=1.10]{cochineal}
  \usepackage{amsthm}
  \usepackage[cochineal,vvarbb]{newtxmath}
      \usepackage{helvet}
    \usepackage[scale=0.95,varl]{inconsolata}
\else 
    \usepackage{unicode-math} 
    \usepackage{mathpazo}
  \usepackage{newpxtext}
    \usepackage[scaled]{helvet}
    \defaultfontfeatures{Scale=MatchLowercase}
  \defaultfontfeatures[\rmfamily]{Ligatures=TeX,Scale=1}
\fi

\usepackage{sectsty}
\ifPDFTeX\else
\fi
\IfFileExists{upquote.sty}{\usepackage{upquote}}{}
\IfFileExists{microtype.sty}{
  \usepackage[]{microtype}
  \UseMicrotypeSet[protrusion]{basicmath} 
}{}
\makeatletter
\@ifundefined{KOMAClassName}{
  \IfFileExists{parskip.sty}{%
    \usepackage{parskip}
  }{
    \setlength{\parindent}{0pt}
    \setlength{\parskip}{6pt plus 2pt minus 1pt}}
}{
  \KOMAoptions{parskip=half}}
\makeatother

\usepackage{longtable,booktabs,array}
\usepackage{calc} 
\usepackage{etoolbox}
\makeatletter
\patchcmd\longtable{\par}{\if@noskipsec\mbox{}\fi\par}{}{}
\makeatother
\IfFileExists{footnotehyper.sty}{\usepackage{footnotehyper}}{\usepackage{footnote}}
\makesavenoteenv{longtable}
\usepackage{graphicx}
\makeatletter
\newsavebox\pandoc@box
\newcommand*\pandocbounded[1]{
  \sbox\pandoc@box{#1}%
  \Gscale@div\@tempa{\textheight}{\dimexpr\ht\pandoc@box+\dp\pandoc@box\relax}%
  \Gscale@div\@tempb{\linewidth}{\wd\pandoc@box}%
  \ifdim\@tempb\p@<\@tempa\p@\let\@tempa\@tempb\fi
  \ifdim\@tempa\p@<\p@\scalebox{\@tempa}{\usebox\pandoc@box}%
  \else\usebox{\pandoc@box}%
  \fi%
}
\def\fps@figure{htbp}
\makeatother

\providecommand{\tightlist}{%
  \setlength{\itemsep}{0pt}\setlength{\parskip}{0pt}}

\usepackage[]{natbib}
\usepackage{mathtools}
\usepackage{amsmath}
\usepackage{amsthm}
\usepackage{amssymb}
\usepackage[italicdiff]{physics}

\renewcommand{\bf}{g} 
\newcommand{\tf}{h} 
\renewcommand{\b}{{\mathrm{BART}}}
\newcommand{\vmin}{\wedge}
\newcommand{\vmax}{\vee}

\newcommand{\cC}{\mathcal{C}}
\newcommand{\cS}{\mathcal{S}}
\newcommand{\cN}{\mathcal{N}}
\newcommand{\cF}{\mathcal{F}}

\newcommand{\X}{\mathcal{X}}
\newcommand{\A}{\mathcal{A}}
\newcommand{\V}{\mathcal{V}}

\newcommand{\bx}{\mathbf{x}}
\newcommand{\bu}{\mathbf{u}}
\newcommand{\by}{\mathbf{y}}

\newcommand{\adj}{\sim_\tf}

\newcommand{\D}{\mathcal{D}}
\renewcommand{\H}{\mathcal{H}}
\newcommand{\GP}{\mathrm{GP}}

\newcommand{\R}{\mathbb{R}}
\newcommand{\N}{\mathbb{N}}

\newcommand{\iid}{\stackrel{\text{iid}}{\sim}}
\DeclareMathOperator{\Norm}{\mathcal{N}}

\DeclareMathOperator*{\argmin}{arg\,min}
\newcommand{\Unif}{\mathrm{Unif}}
\newcommand{\Pois}{\mathrm{Pois}}

\newcommand{\E}{\mathbb{E}}
\renewcommand{\Pr}{\mathbb{P}}

\newcommand{\Cov}{\mathrm{Cov}}

\newcommand{\vbg}[1]{\vb*{#1}}
\providecommand{\norm}[1]{\lVert #1\rVert}

\newcommand{\ind}{\mathbf{1}}

\newcommand{\cvd}{\Rightarrow}

\AtBeginDocument{%
  \renewcommand{\bf}{g}%
  \renewcommand{\b}{{\mathrm{BART}}}%
  \renewcommand{\H}{\mathcal{H}}%
  \renewcommand{\L}{\mathcal{L}}
}
\makeatletter
\@ifpackageloaded{caption}{}{\usepackage{caption}}
\AtBeginDocument{%
\ifdefined\contentsname
  \renewcommand*\contentsname{Table of contents}
\else
  \newcommand\contentsname{Table of contents}
\fi
\ifdefined\listfigurename
  \renewcommand*\listfigurename{List of Figures}
\else
  \newcommand\listfigurename{List of Figures}
\fi
\ifdefined\listtablename
  \renewcommand*\listtablename{List of Tables}
\else
  \newcommand\listtablename{List of Tables}
\fi
\ifdefined\figurename
  \renewcommand*\figurename{Figure}
\else
  \newcommand\figurename{Figure}
\fi
\ifdefined\tablename
  \renewcommand*\tablename{Table}
\else
  \newcommand\tablename{Table}
\fi
}
\@ifpackageloaded{float}{}{\usepackage{float}}
\floatstyle{ruled}
\@ifundefined{c@chapter}{\newfloat{codelisting}{h}{lop}}{\newfloat{codelisting}{h}{lop}[chapter]}
\floatname{codelisting}{Listing}

\usepackage{amsthm}
\theoremstyle{plain}
\newtheorem{theorem}{Theorem}[section]
\theoremstyle{plain}
\newtheorem{proposition}{Proposition}[section]
\theoremstyle{plain}
\newtheorem{lemma}{Lemma}[section]
\theoremstyle{plain}
\newtheorem{corollary}{Corollary}[section]
\theoremstyle{remark}
\AtBeginDocument{}

\makeatother
\makeatletter
\@ifpackageloaded{caption}{}{\usepackage{caption}}
\@ifpackageloaded{subcaption}{}{\usepackage{subcaption}}
\makeatother
\usepackage{bookmark}
\IfFileExists{xurl.sty}{\usepackage{xurl}}{} 
\hypersetup{
  pdftitle={The Gaussian Process Limit of BART},
  pdfkeywords={BART, Gaussian process, random features},
  colorlinks=true,
  linkcolor={black},
  filecolor={Maroon},
  citecolor={VioletRed4},
  urlcolor={DodgerBlue4},
  pdfcreator={LaTeX via pandoc}}

\makeatletter
\newcommand\@shorttitle{}
\newcommand\shorttitle[1]{\renewcommand\@shorttitle{#1}}
\usepackage{fancyhdr}
\fancyheadoffset{0pt}
\makeatother
\renewenvironment{abstract}{
  \centerline
  {\large\sffamily\bfseries Abstract}\vspace{-0.25em}
  \begin{quote}\small
}{
  \end{quote}
}

\title{\sffamily\bfseries\huge\parfillskip=0pt
\rightskip=0pt plus .5\textwidth
\leftskip=0pt plus .5\textwidth
\emergencystretch=.3\textwidth Seeing the Forest for the Trees: The
Gaussian Process Limit of BART}
\shorttitle{The Gaussian Process Limit of BART}
\author{\textbf{Cory McCartan}\footnote{
To whom correspondence should be addressed.
Email: \texttt{\href{mailto:mccartan@psu.edu}{mccartan@psu.edu}}.
Website: \url{https://corymccartan.com/}.
Address:
326 Thomas Building, University Park, PA 16802.
The authors thank Bharath K. Sriperumbudur for helpful comments.}
\\Department of Statistics%
\\Pennsylvania State University%
\vspace{2pt}
 \and \textbf{Melody Huang}
\\Department of Political Science%
\vspace{2pt}
\\Department of Statistics \& Data Science%
\\Yale University%
\vspace{2pt}
 }
\date{August 31, 2026}

\begin{document}
\allsectionsfont{\sffamily}

\maketitle

\begin{abstract}
Bayesian Additive Regression Trees (BART) have shown state-of-the-art
performance in both prediction and causal inference problems. Previous
theoretical work has attempted to explain BART's superior performance by
establishing posterior contraction rates for standard BART models, but
these rates depend strongly on the number of covariates. Here, we take a
different approach and study the behavior of BART as the number of trees
grows towards infinity. We show that in this regime, BART converges to a
Gaussian process (GP) with a particular kernel. The kernel and its
corresponding reproducing kernel Hilbert space (RKHS) have favorable
inferential properties that help explain BART's excellent performance.
We introduce \emph{random tree features} as an approximation to this
limiting GP, and establish minimax-optimal learning rates for ridge
regression on these random features that depend only logarithmically on
dimension. In addition to providing insight into the empirical success
of BART, random tree features offer a computational benefit over
traditional MCMC estimation. The random-features approximation also
allows practitioners to easily incorporate BART into any model which has
a linear predictor, expanding the applicability and flexibility of BART.
\end{abstract}

\textbf{\textit{Keywords}}\quad BART~\textbullet~Gaussian
process~\textbullet~random features

\newpage

\section{Introduction}\label{sec-intro}

Bayesian Additive Regression Trees
\citep[BART,][]{chipman2007bayesian, chipman2010bart} is a nonparametric
Bayesian regression model that is often considered the Bayesian analogue
of boosted trees. BART models the regression function as a sum of many
decision trees, which are constrained to be small by the prior on each
tree. BART has demonstrated strong empirical performance across both
prediction and causal inference tasks, often outperforming alternative
approaches such as random forests and gradient boosting, and is widely
applied across the social and biomedical sciences
\citep{linero2017review, hill2020bayesian, dorie2019automated}.

BART's practical success has spurred theoretical investigations into its
asymptotic properties. While posterior contraction rates have been
established for BART and its variants under a variety of function
classes \citep[see, e.g.,][]{rovckova2019theory}, they suggest but do
not isolate a specific aspect of the BART model that is responsible for
its superior performance. Previous studies have hypothesized several
explanations for BART's success, including additivity, the Bayesian
inferential framework, the model's ability to adapt the tree structure,
and the flexible nature of the trees themselves
\citep{chipman2010bart, rovckova2020posterior, linero2018bayesian}.

We show that besides additivity, these other explanations are not
empirically supported when the number of trees are large. Instead, we
argue that BART is most usefully viewed as a random-feature
approximation to a limiting Gaussian process. This perspective provides
theoretical, explanatory, computational, and practical benefits for
working with BART.

\subsection{Contributions}\label{contributions}

We first investigate the load-bearing components of the BART model
through a simple ablation study, presented briefly in
Section~\ref{sec-ablate} and in more detail in
Section~\ref{sec-sim-ablate}. By removing aspects of the model, we show
that Bayesian learning of the tree structure, and even aspects of the
tree structure itself, do not appear to drive the bulk of BART's
performance, so long as the number of trees is sufficiently large.

This motivates a theoretical investigation into the large-tree limit of
BART. In Section~\ref{sec-gp}, we establish that a BART prior on
symmetric trees (defined below) converges weakly to a Gaussian process
(GP) in the infinite-tree limit, and derive a series expression for the
covariance function and a closed-form special case. While the GP limit
has been conjectured \citep{linero2017review} and the finite-dimensional
covariance derived in the asymmetric case \citep{petrillo2024gaussian},
we are, to our knowledge, the first to formally state and prove the weak
convergence result.

We investigate the reproducing kernel Hilbert space (RKHS) corresponding
to the limiting GP and show that it is equivalent to \(S_1([0,1]^p)\),
the Sobolev space with dominating mixed derivatives \citep[and defined
below]{nikol1962boundary, lin2000tensor}. As we discuss in
Section~\ref{sec-rkhs}, this space can be interpreted as the set of
functions that are additively decomposable as \(p\)-way interactions of
univariate functions. This result helps explain the performance of BART,
because the minimax rate for regression in \(S_1([0,1]^p)\) depends on
the covariate dimension \(p\) only logarithmically.

Of course, these results only apply with infinitely many trees. In
Section~\ref{sec-rf}, we study the behavior of a random-feature
approximation to the limiting GP. Practically, this means a ridge
regression on \emph{random tree features} drawn from the prior, without
any adaptation. We show that, under appropriate regularity conditions,
as few as \(T_n\asymp n^{1/3}\log(n)^{1 + 2(p-1)/3}\) trees can be
sufficient to achieve an error rate of \(n^{-1/3}\log(n)^{(p-1)/3}\),
which is minimax-optimal for regression in \(S_1([0,1]^p)\)
\citep{lin2000tensor}. The error rate is particularly notable as it is
faster than \(n^{-1/4}\) for \emph{any} \(p\), and relies only on weak
smoothness assumptions on the underlying function. This allows
regression on random tree features to be used within modern debiased
machine learning estimators, which require a rate faster than
\(n^{-1/4}\) for nuisance function estimation
\citep{chernozhukov2018double}.

The result has immediate computational implications as well: rather than
carry out relatively expensive MCMC sampling for a BART model, a single
ridge regression on sufficiently many completely random tree features
can provide optimal inferential rates. Even when computation is not a
bottleneck, the random basis expansion perspective allows practitioners
to achieve BART-like performance in a much wider variety of models,
simply by including the random tree features as part of any linear
predictor. This is a marked improvement over current practice, which
requires experts to write custom MCMC samplers that may suffer from poor
mixing, especially in non-Gaussian settings where data augmentation can
be required
\citep{sparapani2016nonparametric, murray2021log, deshpande2026vcbart}.

We conclude in Section~\ref{sec-sim} with a real-data evaluation of the
predictive performance of random tree features compared to full BART,
boosted trees, and random forests. We also compare the uncertainty
quantification of random tree features to full BART as well as the
limiting GP, and find strong similarities between random features and
full BART.

\subsection{Related Work}\label{related-work}

This paper connects to three distinct branches of existing work: (1)
prior theoretical work on BART's performance, (2) nonparametric methods
that target functions in spaces like \(S_1([0, 1]^p)\), and (3) random
feature approximations to Gaussian processes and kernel methods.

Existing BART literature has established different contraction rates for
BART for a variety of different function classes. For example,
\citet{van2017bayesian} proves near-minimax rates for step mean
functions in Gaussian regression; \citet{rovckova2019theory} and
\citet{rovckova2020posterior} provide rates for Hölder continuous
functions. These results all suffer from the curse of dimensionality.
More specifically, the rates generally all boil down to the typical
nonparametric rate \(n^{-\alpha/(2\alpha + p)}\) for \(p\) covariates
and a smoothness parameter \(\alpha\). Follow-up work has tackled this
challenge through new proof techniques as well as modifications to the
BART prior, leading to results in more esoteric function spaces that are
anisotropic or sparse
\citep{jeong2023art, yee2024scalable, rovckova2020posterior, linero2018var, linero2018bayesian}.
By and large, these results alter the basic rate by letting \(p\) refer
to the \emph{effective} number of covariates, either across the entire
function or after decomposing into a small number of additive
components. While these results shed light on BART's adaptability, the
curse of dimensionality remains: even a moderate number of effective
covariates still leads to a slow rate. With \(\alpha<1\) (as is often
required), an effective \(p\ge 3\) leads to a rate slower than
\(n^{-1/4}\).

All of these papers focus on the \(n\to\infty\) limit, with \(T\) fixed
(or drawn from a fixed prior). Instead, we focus on the \(T \to \infty\)
limit, which leads to a different theoretical approach. Growing \(T\)
with \(n\) preserves the ``many small trees'' intuition that motivated
BART \citep{chipman2010bart}. The resulting learning rates, which unlike
existing rates do not depend as strongly on the covariate dimension,
highlight the importance of this intuition.

A different recent strand of literature, which initially appears to be
far distant from BART, develops regression methods with a large basis
expansion consisting of interactions of step functions
\citep[e.g.,][]{fang2021multivariate, benkeser2016highly, vanderlaan2023higher, schuler2024highly, schuler2022ltb}.
The proposed approaches obtain learning rates similar to the ones we
find here, for similar or identical function classes. This includes the
highly adaptive lasso
\citep[HAL,][]{benkeser2016highly, vanderlaan2023higher}, highly
adaptive ridge \citep[HAR,][]{schuler2024highly}, and lassoed tree
boosting \citep[LTB,][]{schuler2022ltb}. The basis features in these
approaches are of an identical form to the ones studied here under the
symmetric-tree BART model we define. The key difference is that HAL and
HAR use all possible interactions of \emph{all} features. Instead, we
use a random set of such features, with priority to low-dimensional
interactions. One interpretation of our main random-feature result here
is as showing that a randomized version of HAR can perform just as well
asymptotically, while avoiding HAR's \(O(n^3)\) computational cost.

The developers of HAL, HAR, and LTB operate within slightly different
function classes, those defined by properties of their sectional
derivatives, but as they explain, their classes are similar or identical
to \(S_1([0, 1]^p)\) under certain assumptions. Other authors, most
notably \citet{zhang2023regression}, have studied regression in
\(S_1([0, 1]^p)\) directly. \citet{zhang2023regression} develop a
deterministic sieve basis expansion based on reordered tensor products
of continuous basis functions of univariate Sobolev spaces. While not
identical, the pattern of features in their sieve is similar to the
random tree features developed here, as they primarily consists of
interactions of simple nonlinear functions of a few covariates. The
sieve perspective naturally leads to questions about a more
deterministic way of selecting random tree features from the prior,
which may pay dividends theoretically and practically.

Finally, our use of random tree features builds directly on prior work
on random feature approximations to Gaussian processes, a framework
first introduced by \citet{rahimi2007random}. Most of the theoretical
work has focused on random Fourier features, which are slightly simpler
to analyze than the multivariate and discontinuous random features
studied here. We build particularly on the results of
\citet{rudi2017generalization}, generalizing some of their key results
to handle these complications. More applied literature has directly
proposed random features not dissimilar from the random trees here,
including `random binning features' proposed by \citet{rahimi2007random}
and the Mondrian kernel studied by \citet{balog2016mondrian}. Very
recently, \citet{linero2026bayesian} proposed random tree features as a
way of approximating a BART-based kernel mean embedding. However, none
of these papers formally established weak convergence to a GP, nor
learning rates comparable to those developed here.

\section{Setup}\label{sec-setup}

We focus on the setting of estimating a regression function \(g(\vb x)\)
on predictors \(\vb x\) lying in the unit hypercube \([0, 1]^p\). This
section introduces the BART model for \(g\) and establishes our
notation. Once the BART model is defined, we empirically investigate the
performance of its components through an ablation study, which motivates
our theoretical investigations in the rest of the paper.

Let \(\vb 1\{\cdot\}\) be an indicator function for the event
\(\{\cdot\}\). By \([p]\) for \(p\in\N\) we mean the set
\(\{1, \dots, p\}\). We denote weak convergence by \(\cvd\). The
notation \(a \lesssim b\) means that \(a\le C\cdot b\) for a universal
constant \(C\); \(a\asymp b\) means \(a\lesssim b\) and \(b\lesssim a\),
or equivalently \(C\le a/b \le C'\) for universal constants \(C, C'\).

\subsection{Decision trees}\label{decision-trees}

A \emph{decision tree} is a random function of \(\bx\in[0, 1]^p\)
defined by a partition of \([0,1]^p\) into \(L\) hyperrectangular
regions or \emph{blocks} of the form
\([a_1, b_1)\times\cdots\times[a_p, b_p)\), where each
\([a_j, b_j)\subseteq[0, 1]\), and the right endpoint is closed if
\(b_j=1\).

We index blocks by binary words \(\vb l\in \L \subseteq \{-1, 1\}^D\),
where \(D\) is the \emph{depth} of the tree: the number of binary
decisions that define membership in the block indexed by \(\vb l\). In
general we have \(D+1\le |\L|\le 2^D\). We denote by
\(\psi_{\vb l}(\bx)\) the indicator variable that \(\bx\) is in region
indexed by \(\vb l\).

Each block is associated with a \emph{leaf parameter}
\(\mu_{\vb l}\in\R\). The tree function itself is constant within each
block, and so can be represented as a linear combination:
\begin{equation}\protect\phantomsection\label{eq-bart}{
\tf(\bx) = \sum_{\vb l\in\L} \mu_{\vb l} \psi_{\vb l}(\bx).
}\end{equation}

A \emph{symmetric decision tree} is a special case where the decision
rule and cutpoint are the same for all nodes at the same level of the
tree. Formally, \(\L=\{-1,1\}^D\) and the blocks take a certain
structure, defined by a tuple of \(D\) decision rules, each consisting
of a \emph{variable} \(V\in[p]^D\) and a cutpoint or \emph{split value}
\(S\in[0, 1]\). Together, the rules map the input \(\bx\) to one of
\(2^D\) regions, which we can represent as \[
    \psi_{\vb l}(\bx; D, \vb V, \vb S) := \prod_{k=1}^D \ind\{l_kx_{V_k} <_{l_k} l_kS_k\}.
\] By \(<_{l_k}\) we mean \(<\) if \(l_k=1\) and \(\le\) if \(l_k=-1\),
so that each \(\bx\) is mapped to exactly one leaf. Since multiplication
is commutative, the order in which the decision rules are applied in a
symmetric tree is arbitrary; each symmetric tree has \(D!\) equivalent
representations yielding the same partition of \([0, 1]^p\).

\subsection{BART}\label{bart}

A BART function with \(T\) trees is defined by \[
\bf_T(\bx) := T^{-1/2} \sum_{j=1}^T \tf_j(\bx)
= T^{-1/2} \sum_{j=1}^T \sum_{\vb l\in\L} \mu_{j\vb l} \psi_{j\vb l}(\bx)
= T^{-1/2} \sum_{j=1}^T \Psi_j(\bx)^\top \vbg\mu_j,
\] where \(\Psi_j = (\psi_{j\vb l})_{\vb l\in\L}\) denotes the vector of
indicator functions. We draw the parameters of each \(\tf_j\) i.i.d.
from the following generic prior: \[
\Psi_j \sim f_\Psi \qand
\mu_{j\vb l}\mid D_j \iid \Norm(0, \sigma_\mu^2).
\] In general BART models, the tree structure prior \(f_\Psi\) is
defined by a branching process. For symmetric trees, we can factor this
prior into pieces specific to \(D\), \(\vb V\), and \(\vb S\). \[
D_j \sim f_D, \qquad
V_{jk}\mid D_j \iid f_V, \qand
S_{jk}\mid D_j \iid f_S.
\] Here \(f_D\), \(f_V\), and \(f_S\) are generic density functions,
with \(f_D\) supported on the positive integers, \(f_V\) supported on
\([p]\) with \(f_V(v)>0\) everywhere, and \(f_S\) continuous and
supported on \([0, 1]\) with strictly positive density everywhere.
Often, a BART function is used directly as the model for a conditional
expectation function (CEF) \(\E[Y\mid \bx, \bf_T]=\bf_T(\bx)\), as we do
here, but BART functions can be used within other parts of a model as
well.

\subsection{Motivating ablation study}\label{sec-ablate}

In fitting a BART model, both the tree structure encoded by
\((D_j, \vb V_j, \vb S_j)_{j=1}^T\) and the leaf parameters
\((\vbg\mu_j)_{j=1}^T\) are learned from the data. It is natural to ask
which of these parameters are most important for explaining BART's
empirical success, or whether both are equally important. Two other
parts of the BART model are found universally in BART applications, but
each impose a nontrivial computational cost: the use of Bayesian
inference for the parameters, and the use of fully general trees versus
symmetric trees. It is likewise natural to wonder whether these parts of
the model are strictly necessary for BART's performance.

\begin{figure}[ht]

\centering{

\pandocbounded{\includegraphics[keepaspectratio]{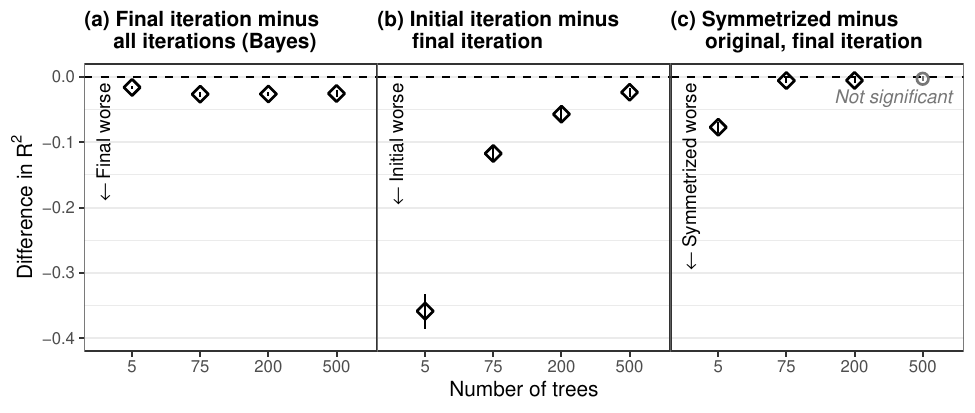}}

}

\caption{\label{fig-ablate-intro}\textbf{Three ablation studies on
\texttt{abalone} data} (\(n=4\,177, p=8\)). Each panel compares
out-of-sample \(R^2\) values of the posterior mean, for two different
regression fits and different numbers of regression trees. The average
difference in \(R^2\) across 20 cross validation splits and multiple
replicates is plotted along with 95\% confidence intervals. \textbf{(a)
Ablating Bayes:} \(R^2\) for the full posterior mean versus the
posterior mean conditioned on the tree structure at the final MCMC
iteration. \textbf{(b) Ablating tree learning:} \(R^2\) conditioned on
the final-iteration trees versus the initial-iteration trees.
\textbf{(c) Ablating asymmetric trees:} \(R^2\) conditioned on the
final-iteration trees versus a symmetrized version of those same trees.}

\end{figure}%

We investigate these three aspects of the BART model through a simple
ablation study. In this section, we provide the results for a single
dataset, with comprehensive results for a battery of datasets in
Section~\ref{sec-sim-ablate}. We consider the \texttt{abalone} dataset,
which contains the age of 4,177 abalone and 8 predictors such as sex,
diameter, and shell weight \citep{abalone}. For each part of the model,
we compare the predictive performance of the full BART model, as
measured with out-of-sample \(R^2\), to a version of the model with
different parts removed, in the ways we describe below.
Figure~\ref{fig-ablate-intro} plots the difference in \(R^2\) for each
of these experiments as the number of trees \(T\) varies.

First, to measure the impact of Bayesian inference, we compute the
posterior conditional on the tree structure at the final MCMC iteration,
and compare its performance to the full posterior mean. As panel (a)
shows, using a single sampled tree structure incurs only a small loss in
predictive performance, which appears relatively constant across the
number of trees.

Second, to measure the impact of learning the tree structure, we compare
the posterior conditional on the tree structure at the final MCMC
iteration to that from the initial MCMC iteration. To the extent that
the posterior exhibits adaptation to the tree structure, we would expect
a large gap in performance between these two samples. While this gap is
evident for small \(T\) in panel (b), it narrows as \(T\) increases, and
is relatively small at \(T=500\) for these data.

Finally, to measure the impact of asymmetric trees, we build a
symmetrized version of the final-iteration tree structure and compare
posteriors conditional on the asymmetric and symmetric trees. To
symmetrize, we replace all nodes at each level of a tree with a single
node chosen uniformly from all the nodes at that level. We randomly
terminate this process with a certain probability at each depth, set so
that the expected number of nodes in the symmetrized tree matches the
number of nodes in the original tree. This ensures that the symmetrized
tree is no more expressive than the original tree, on average. As panel
(c) shows, the impact of using symmetrized trees is nearly zero as long
as \(T\) is not very small. If anything, this figure overstates the
performance of asymmetric trees, since the original MCMC did not sample
with symmetric trees in mind.

Taken together, these experiments suggest that once \(T\) is large, none
of Bayesian averaging, tree structure learning, or asymmetric trees is
critical to BART's performance. The additive combination of many small
interacted step functions appears to be the key ingredient, at least
asymptotically. This motivates our investigation into the \(T\to\infty\)
limit of BART, and allows us to focus on symmetric trees, which are
computationally more efficient and theoretically simpler to study.

\section{BART converges to a Gaussian process}\label{sec-gp}

In this section, we investigate the distribution of \(\bf_T\) as a
random function, and its limiting behavior as \(T\to\infty\). We
establish that \(\bf_T \cvd \GP(0, k_\b)\) for a certain kernel \(k_\b\)
and on a suitably defined function space. First, we derive the
covariance function of \(\bf_T\), which in turn yields convergence of
the finite-dimensional distributions of \(\bf_T\). We then turn to the
weak convergence of \(\bf_T\) itself.

\subsection{Covariance function}\label{covariance-function}

From the definition of the BART model, for any fixed \(\bx\), \[
\bf_T(\bx)\mid \{\Psi_j\}_{j=1}^T \sim \Norm(0, \sigma^2_\mu),
\] since exactly one \(\mu_{j\vb l}\) contributes for each \(j\), and
each is independently drawn with variance \(\sigma_\mu^2\). With
finitely many trees, \(\bf_T(\bx)\) and \(\bf_T(\bx')\) are not jointly
Gaussian for \(\bx\neq\bx'\); however, we can derive their covariance.
We do so in two steps: first, showing that the covariance function of a
single tree, i.e., between \(\tf(\bx)\) and \(\tf(\bx')\), is
proportional to the probability of \(\bx\) and \(\bx'\) being in the
same leaf node, and second, computing that probability for symmetric
trees specifically.

\begin{proposition}[]\protect\hypertarget{prp-tree-cov}{}\label{prp-tree-cov}

For a tree \(\tf\) drawn from its prior, and any fixed
\(\bx, \bx'\in[0, 1]^p\), \[
\Cov[\tf(\bx), \tf(\bx')] = k_\b(\bx, \bx')
:= \sigma^2_\mu \Pr[\bx\adj \bx'],
\] where \(\bx\adj \bx'\) is the event that \(\bx\) and \(\bx'\) are in
the same leaf node in \(\tf\).

\end{proposition}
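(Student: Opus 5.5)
The plan is to condition on the tree structure $\Psi$ and then apply the law of total covariance. Write $\tf(\bx)=\sum_{\vb l\in\L}\mu_{\vb l}\psi_{\vb l}(\bx)$. Conditional on $\Psi$, each $\mu_{\vb l}$ has mean zero, so $\E[\tf(\bx)\mid\Psi]=0$ for every $\bx$. Hence
\[
\Cov[\tf(\bx),\tf(\bx')]=\E\bigl[\Cov[\tf(\bx),\tf(\bx')\mid\Psi]\bigr]+\Cov\bigl[\E[\tf(\bx)\mid\Psi],\E[\tf(\bx')\mid\Psi]\bigr],
\]
and the second term vanishes. It remains to compute the conditional covariance and then average over $\Psi$.

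For the conditional covariance, I would use bilinearity together with the conditional independence of the leaf parameters:
\[
\Cov[\tf(\bx),\tf(\bx')\mid\Psi]=\sum_{\vb l,\vb l'}\psi_{\vb l}(\bx)\psi_{\vb l'}(\bx')\,\E[\mu_{\vb l}\mu_{\vb l'}\mid\Psi].
\]
The expectation on the right equals $\sigma_\mu^2\ind\{\vb l=\vb l'\}$, so the sum collapses to $\sigma_\mu^2\sum_{\vb l}\psi_{\vb l}(\bx)\psi_{\vb l}(\bx')$. The blocks partition $[0,1]^p$; for symmetric trees this is guaranteed by the $<_{l_k}$ convention, and I would check that it makes each $\bx$ lie in exactly one block, including the boundary cases $x_{V_k}=S_k$ and $b_j=1$. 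Given the partition property, the sum $\sum_{\vb l}\psi_{\vb l}(\bx)\psi_{\vb l}(\bx')$ is exactly the indicator $\ind\{\bx\adj\bx'\}$ that the two points share a leaf.

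Taking the expectation over $\Psi\sim f_\Psi$ then gives $\sigma_\mu^2\Pr[\bx\adj\bx']$, which is the claim. Since all quantities are bounded indicators times Gaussians with finite variance, Fubini and the tower property apply without issue. One subtlety is that the number of leaves is random, since $D$ is random. I would handle this by conditioning on $D$ as well, which is part of $\Psi$, so that every sum is finite. The prior's notation $\mu_{j\vb l}\mid D_j$ already reflects this.

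I do not expect a real obstacle. The only step requiring care is verifying that the blocks form an exact partition, so that the indicator identity holds pointwise.
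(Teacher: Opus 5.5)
Your proposal is correct and follows essentially the same route as the paper. The paper also uses mean-zero-ness to reduce to $\E[\tf(\bx)\tf(\bx')]$, collapses the double sum via $\E[\mu_{\vb l}\mu_{\vb l'}\mid \cdot\,]=\sigma_\mu^2\ind\{\vb l=\vb l'\}$, and identifies $\sum_{\vb l}\psi_{\vb l}(\bx)\psi_{\vb l}(\bx')$ with the same-leaf indicator; your law-of-total-covariance framing just repackages that tower-property step, and your explicit check that the blocks form an exact partition makes precise a fact the paper uses implicitly.
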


We now turn to characterizing \(\Pr[\bx\adj \bx']\) for symmetric trees.
To do so, define the \emph{symmetric BART covariate metric} as \[
d_\b(\bx, \bx')
:= \sum_{v=1}^p f_V(v) \cdot \left|F_S(x_v) - F_S(x'_v)\right|.
\] \(d_\b\) is clearly symmetric. Furthermore, because \(f_S\) has
strictly positive density, \(F_S\) is strictly increasing and so
\(d_\b\) also satisfies positivity. The triangle inequality follows from
the triangle inequality for absolute values. As a result, \(d_\b\) is
indeed a metric.

When \(f_V\) is uniform on \([p]\), and \(f_S\) is uniform on
\([0, 1]\), then \[
d_\b(\bx, \bx') = p^{-1}\norm{\bx - \bx'}_1.
\] Because of the rescaling, \(d_\b(\,\cdot\,,\,\cdot\,)\le 1\). We can
then express \(\Pr[\bx\adj \bx']\) as a series involving \(d_\b\) and
the distribution of tree depths \(f_D\).

\begin{proposition}[]\protect\hypertarget{prp-tree-cov-exact}{}\label{prp-tree-cov-exact}

For a tree \(\tf\) drawn from the symmetric BART tree prior, and any
fixed \(\bx, \bx'\in[0, 1]^p\), \[
\Pr[\bx\adj \bx'] = \sum_{k=1}^\infty \left(1 - d_\b(\bx, \bx')\right)^k f_D(k).
\]

\end{proposition}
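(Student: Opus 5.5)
We condition on the depth \(D\) and use the product structure of the symmetric-tree leaf indicators. In a symmetric tree with rules \((V_k,S_k)_{k=1}^D\), two points \(\bx,\bx'\) fall in the same leaf if and only if, for every level \(k\), they fall on the same side of the split \(x_{V_k}\) versus \(S_k\). This follows because \(\psi_{\vb l}(\bx)\psi_{\vb l}(\bx')\) is a product over \(k\) of per-level indicators. So
\[
\{\bx\adj\bx'\} = \bigcap_{k=1}^D \bigl\{\ind\{x_{V_k} < S_k\} = \ind\{x'_{V_k} < S_k\}\bigr\},
\]
with the convention that the left branch uses \(<\) and the right uses \(\le\). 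Conditional on \(D\), the pairs \((V_k,S_k)\) are i.i.d. across \(k\). Hence
\[
\Pr[\bx\adj\bx' \mid D] = q^D, \qquad q := \Pr\bigl[\text{a single rule does not separate } \bx,\bx'\bigr].
\]
Averaging over \(D\sim f_D\), which is supported on the positive integers, gives \(\sum_{k\ge1} q^k f_D(k)\). Interchanging the sum and the expectation is harmless since all terms are nonnegative.

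It remains to show \(q = 1 - d_\b(\bx,\bx')\). Condition on \(V=v\). The rule separates \(\bx\) and \(\bx'\) exactly when \(S\) lies between \(x_v\) and \(x'_v\), i.e. in \([\min(x_v,x'_v),\max(x_v,x'_v))\), where the endpoint conventions determine which end is included. Since \(f_S\) is continuous (so \(S\) has no atoms), the endpoint conventions do not matter. The separation probability is therefore
\[
\Pr[\text{separate} \mid V=v] = |F_S(x_v) - F_S(x'_v)|.
\]
Averaging over \(V\sim f_V\) and using the independence of \(V\) and \(S\) gives
\[
1-q = \sum_v f_V(v)\,|F_S(x_v)-F_S(x'_v)| = d_\b(\bx,\bx').
\]

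The only delicate step is the first one: checking that leaf co-membership factorizes exactly across levels. This needs the symmetric structure, since every node at level \(k\) uses the same \((V_k,S_k)\). It also needs the strict/non-strict inequality convention, so that each point lies in exactly one leaf and "same side at each level" is equivalent to "same leaf." Once that is in place, the rest follows from conditional independence.
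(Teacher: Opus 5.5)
Your proof is correct and is essentially the paper's argument: the paper's distributive-law step, which rewrites $\sum_{\vb l}\prod_k$ as $\prod_k\sum_{l\in\{-1,1\}}$ and uses conditional independence given $D$, is exactly your level-by-level ``same side'' factorization, and its per-level value $(1-F_S(x_{V_k}\vmax x'_{V_k}))+F_S(x_{V_k}\vmin x'_{V_k}) = 1-|F_S(x_{V_k})-F_S(x'_{V_k})|$ is the complement of your separation probability. One tiny slip: under the stated $<$/$\le$ conventions the separating set is $(\min,\max]$ rather than $[\min,\max)$, which, as you note, is irrelevant because $S$ has no atoms.
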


\begin{figure}[t]

\centering{

\pandocbounded{\includegraphics[keepaspectratio]{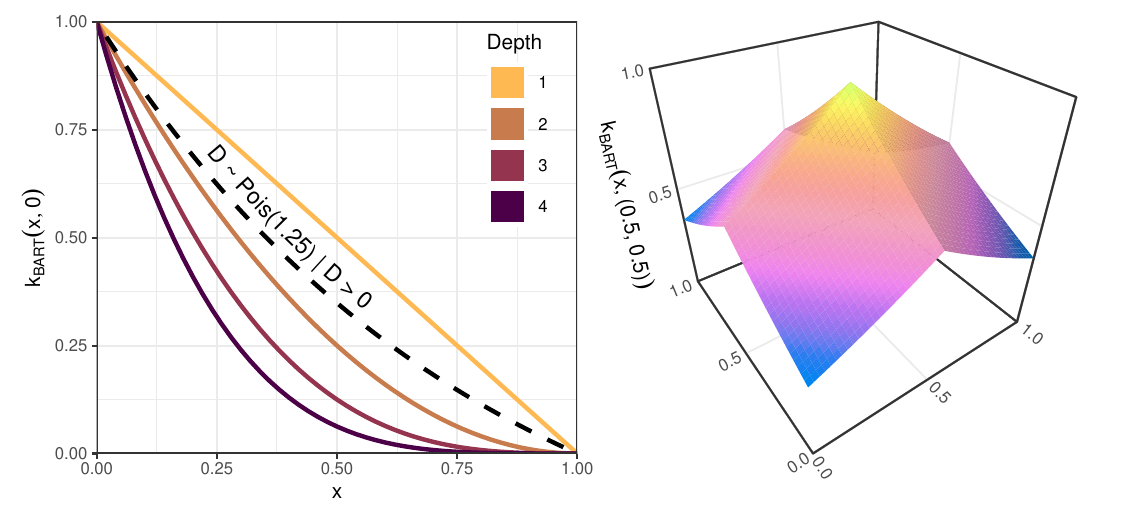}}

}

\caption{\label{fig-kernel}\textbf{Visualization of \(k_\b\)}. The
\textbf{left} panel shows \(k_\b(\cdot, 0)\) for different choices of
the depth prior \(f_D\). The colored lines correspond to \(f_D\) that
place all mass on a single depth, while the black dashed line
corresponds to the Poisson \(f_D\) in Corollary~\ref{cor-cov-pois}, with
\(r=1.25\) chosen to match the mean depth of the traditional BART prior.
The \textbf{right} panel shows \(k_\b(\cdot, (0.5, 0.5))\) on the unit
square, using the same calibrated Poisson \(f_D\).}

\end{figure}%

Certain choices of \(f_D\) yield even simpler closed-form expressions
for \(k_\b\) in symmetric trees. For example, if all trees have depth
\(1\) such that \(f_D(1)=1\) and \(f_D(k)=0\) for all \(k>1\), then
\(k_\b(\bx,\bx')=  1 - d_\b(\bx,\bx')\). This example, as well as the
cases of all trees having depths 2, 3, or 4, are visualized in the left
panel of Figure~\ref{fig-kernel}. A more interesting case is the
following result.

\begin{corollary}[]\protect\hypertarget{cor-cov-pois}{}\label{cor-cov-pois}

Let \(W\sim\Pois(r)\). If \(D\sim(W\mid W>0)\), then \[
k_\b(\bx, \bx') = \frac{\sigma^2_\mu}{1-e^{-r}} \left(\exp(-r\,d_\b(\bx, \bx')) - e^{-r}\right).
\]

\end{corollary}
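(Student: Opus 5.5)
The plan is to combine Proposition~\ref{prp-tree-cov} with Proposition~\ref{prp-tree-cov-exact} and evaluate the resulting series in closed form for the zero-truncated Poisson depth prior.

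First, write down the truncated depth pmf. Since \(D\sim(W\mid W>0)\) with \(W\sim\Pois(r)\), we have
\[
f_D(k) = \frac{e^{-r} r^k/k!}{1-e^{-r}}, \qquad k\ge 1.
\]

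Second, set \(d := d_\b(\bx,\bx')\in[0,1]\) and \(q := 1-d\). Substituting into Proposition~\ref{prp-tree-cov-exact} gives
\[
\Pr[\bx\adj\bx'] = \frac{e^{-r}}{1-e^{-r}}\sum_{k=1}^\infty \frac{(rq)^k}{k!}
= \frac{e^{-r}}{1-e^{-r}}\left(e^{rq}-1\right).
\]
The exponential series converges absolutely for every real argument, so this resummation is justified.

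Third, simplify using \(e^{-r}e^{r(1-d)} = e^{-rd}\). This yields
\[
\Pr[\bx\adj\bx'] = \frac{e^{-rd}-e^{-r}}{1-e^{-r}}.
\]
Multiplying by \(\sigma_\mu^2\), as in Proposition~\ref{prp-tree-cov}, gives the stated formula for \(k_\b\).

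There is no real obstacle here. The only point needing care is that the sum in Proposition~\ref{prp-tree-cov-exact} starts at \(k=1\), because \(f_D\) has no mass at depth \(0\). Subtracting the missing \(k=0\) term of the exponential series is exactly what produces the \(-e^{-r}\) in the numerator.
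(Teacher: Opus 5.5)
Your proposal is correct and matches the paper's proof essentially line for line. Both substitute the zero-truncated Poisson pmf into the series of Proposition~\ref{prp-tree-cov-exact}, resum the exponential series with the missing $k=0$ term subtracted, simplify $e^{-r}e^{r(1-d)}=e^{-rd}$, and multiply by $\sigma_\mu^2$.
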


The rescaling and shifting according to \(-e^{-r}\) differentiates
\(k_\b\) in this case from the form of the \(\ell_1\)-Laplacian kernel
and ensures that when \(d_\b(\bx,\bx')=1\) (i.e., its maximum), the
covariance is zero, and when \(d_\b(\bx,\bx')=0\), the covariance is
\(\sigma^2_\mu\).

This formulation makes clear the role of average depth in determining
the covariance structure. Larger \(r\) produces deeper trees and also
shrinks the length scale of the prior. The right panel of
Figure~\ref{fig-kernel} visualizes \(k_\b\) with this choice of \(f_D\)
on the unit square, with \(r\) chosen by moment-matching the mean tree
depth to the traditional BART prior of \citet{chipman2007bayesian}. The
visible ridge along \(x_1=0.5\) and \(x_2=0.5\) clearly demonstrates the
anisotropic nature of \(k_\b\), a key difference from standard isotropic
kernels like the squared-exponential or Matérn.

\subsection{Convergence result}\label{convergence-result}

Having derived the form of the covariance function for tree functions,
we can apply the central limit theorem to derive the
(finite-dimensional) distributional limit of a BART function.

\begin{lemma}[]\protect\hypertarget{lem-clt-fdd}{}\label{lem-clt-fdd}

For any fixed \(\bx_1,\dots,\bx_k \in[0, 1]^p\), as \(T\to\infty\), \[
(\bf_T(\bx_1), \dots, \bf_T(\bx_k)) \cvd
\Norm(0, K),
\] where \(K_{ij} := k_\b(\bx_i, \bx_j)\).

\end{lemma}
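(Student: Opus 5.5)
The plan is to apply the multivariate Lindeberg--Lévy central limit theorem to the i.i.d.\ vectors
\[
Z_j := (\tf_j(\bx_1), \dots, \tf_j(\bx_k)) \in \R^k, \qquad j = 1, \dots, T,
\]
since by definition \((\bf_T(\bx_1), \dots, \bf_T(\bx_k)) = T^{-1/2}\sum_{j=1}^T Z_j\). The \(Z_j\) are i.i.d.\ because the pairs \((\Psi_j, \vbg\mu_j)\) are drawn i.i.d.\ across trees. It therefore suffices to check three things: \(\E Z_j = 0\), \(\Cov(Z_j) = K\), and that the second moments are finite. The CLT then gives weak convergence to \(\Norm(0, K)\) directly.

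\textbf{Step 1: mean zero.} Condition on the tree structure \(\Psi_j\). Each \(\tf_j(\bx_i)\) equals a single leaf parameter \(\mu_{j\vb l}\), namely the one whose block contains \(\bx_i\). Since \(\mu_{j\vb l}\mid D_j \sim \Norm(0,\sigma_\mu^2)\) independently of \(\vb V_j, \vb S_j\), we get \(\E[\tf_j(\bx_i)\mid\Psi_j]=0\). The tower property then gives \(\E Z_j = 0\).

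\textbf{Step 2: covariance.} Proposition~\ref{prp-tree-cov} gives \(\Cov[\tf_j(\bx_i), \tf_j(\bx_{i'})] = k_\b(\bx_i, \bx_{i'})\) for every pair \(i, i'\), so \(\Cov(Z_j) = K\) entrywise. The diagonal entries are \(k_\b(\bx_i,\bx_i)=\sigma_\mu^2\Pr[\bx_i\adj\bx_i]=\sigma_\mu^2<\infty\). This is consistent with the observation earlier in the section that \(\tf_j(\bx)\) is marginally \(\Norm(0,\sigma_\mu^2)\) whatever the tree structure.

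\textbf{Step 3: finite moments.} Finite variance of each coordinate follows from Step 2, so the multivariate CLT applies. If one prefers a Cramér--Wold argument, fix \(\vb a\in\R^k\) and apply the scalar CLT to \(\vb a^\top Z_j\), which has variance \(\vb a^\top K\vb a\). This covers the case where \(K\) is singular, since a degenerate normal limit is handled trivially.

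The main obstacle is essentially absent. The only subtlety is that the number of leaves and the leaf identities are random, so the moment calculations must go through conditioning on \(\Psi_j\) rather than treating the \(Z_j\) as fixed Gaussian combinations. Proposition~\ref{prp-tree-cov} already handles this, so the proof reduces to citing it together with the i.i.d.\ structure across trees.
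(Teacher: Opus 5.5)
Your proposal is correct and matches the paper's proof essentially step for step. The paper likewise writes the vector as \(T^{-1/2}\sum_{j} Z_j\) for i.i.d.\ \(Z_j\), obtains \(\E[Z_j]=0\) by iterated expectations and \(\Cov[Z_j]=K\) from Proposition~\ref{prp-tree-cov}, and concludes with the multivariate central limit theorem; your added remarks on finite second moments and on a Cram\'er--Wold treatment of singular \(K\) are harmless refinements of that argument.
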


Thus any finite collection of points behaves like a Gaussian process
under \(\bf_T\), as \(T\to\infty\). Showing that \(\bf_T\) itself
converges weakly to a Gaussian process requires additional work, and
also depends on a chosen function space on which the limit will be
defined.

Following \citet{neuhaus1971weak} and \citet{bickel1971convergence}, we
generalize the notion of càdlàg functions to real-valued functions on
\([0, 1]^p\). To do so, notice that every point \(\vb z\in[0,1]^p\)
defines a partition \(\mathcal{B}(\vb z)\) of that space into \(2^p\)
blocks. For example, when \(p=3\),
\([0, z_1)\times[z_2, 1]\times[0, z_3)\) is one such block. We define
the function space \(\D_p\) as the set of functions \(f:[0, 1]^p\to\R\)
such that for every \(\vb x\in[0, 1]^p\), every nonempty block
\(B\in\mathcal{B}(\vb x)\), and every sequence \(\vb x_n\in B\) with
\(\vb x_n\to\vb x\),

\begin{enumerate}
\def\labelenumi{(\arabic{enumi})}
\tightlist
\item
  the sequence \(f(\vb x_n)\) converges; and
\item
  if \(\vb x\in B\), then \(f(\vb x_n)\to f(\vb x)\).
\end{enumerate}

The point \(\vb x\) is only in one of the blocks, so these conditions
encode the idea of ``continuity from above, with limits from below.'' It
is possible to define a metric topology on \(\D_p\) so the space is
Polish, and such that when \(p=1\) we recover the Skorohod function
space of càdlàg functions on \([0, 1]\) \citep{bickel1971convergence}.
As a result, Prokhorov's theorem applies, which means that the tightness
of the sequence \((\bf_T)_{T=1}^\infty\) is sufficient to establish weak
convergence, along with the convergence of finite-dimensional
distributions, which was shown in Lemma~\ref{lem-clt-fdd}. See
\citet{neuhaus1971weak} for more discussion, and
\citet{kern2024skorokhod} for a review of topologies in Skorohod spaces.

To show tightness of a random function sequence, the general approach is
to rely on the Arzelà--Ascoli theorem, which establishes that uniform
boundedness and uniform equicontinuity of the sequence are sufficient
for tightness. However, Arzelà--Ascoli applies to sequences of
continuous functions, while the \(\bf_T\) are discontinuous. We instead
adapt an argument developed by \citet{bickel1971convergence} to show
convergence of sequences of random functions on \(\D_p\), i.e.,
\(\D_p\)-valued stochastic processes.

We now state our convergence result along with a short sketch of our
proof. The full proof is contained in the appendix.

\begin{theorem}[]\protect\hypertarget{thm-gp}{}\label{thm-gp}

Let \(\{\bf_T\}_{T=1}^\infty\) be a sequence of symmetric BART functions
on \([0,1]^p\). If \(F_S\) is Lipschitz continuous and
\(\E[D^{2p}]<\infty\), then as \(T\to\infty\),
\(\bf_T \cvd \GP(0, k_\b)\) on \(\D_p\).

\end{theorem}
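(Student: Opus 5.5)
Since $\D_p$ is Polish and Lemma~\ref{lem-clt-fdd} already gives convergence of the finite-dimensional distributions, the proof reduces to tightness of $(\bf_T)$ in $\D_p$. For tightness I will use the moment criterion of \citet{bickel1971convergence}, which is built for multiparameter processes. Their conditions are stated in terms of the increment of a process $X$ over a block $B=\prod_v (s_v,t_v]$, namely the $p$-fold difference $X(B)=\sum_{\epsilon\in\{0,1\}^p}(-1)^{p-|\epsilon|}X(\cdot)$. The criterion asks for a bound
\[
\E\bigl[|X(B)|^{\gamma_1}|X(C)|^{\gamma_2}\bigr]\le \bigl(\nu(B)\nu(C)\bigr)^{\beta}
\]
for neighbouring blocks $B,C$, with $\beta>1/2$ and a finite measure $\nu$ having continuous marginals, together with control of the process on the lower-dimensional faces of the cube. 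The faces are handled by induction on dimension, since restricting $\bf_T$ to a face is again a symmetric BART function in fewer coordinates. I will take $\gamma_1=\gamma_2=2$ and $\nu=$ the product of the laws $F_S$ (or Lebesgue measure, using that $F_S$ is Lipschitz), and aim for $\beta=1$.

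The key computation is the fourth-order moment. Write $\bf_T(B)=T^{-1/2}\sum_j \tf_j(B)$ with the $\tf_j(B)$ i.i.d. and mean zero. Expanding $\E[\bf_T(B)^2\bf_T(C)^2]$ in the i.i.d. summands gives two kinds of term:
\begin{itemize}
\item the off-diagonal terms $\E[\tf(B)^2]\E[\tf(C)^2]+2(\E[\tf(B)\tf(C)])^2$, which survive the limit;
\item a diagonal term $T^{-1}\E[\tf(B)^2\tf(C)^2]$.
\end{itemize}

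A single symmetric tree has $\tf(B)\neq0$ only if every coordinate interval $(s_v,t_v]$ contains a split on variable $v$. Since $\tf$ is a sum over leaves of $\mu_{\vb l}\psi_{\vb l}$ and the $\psi$ are products of one-dimensional step functions, the $p$-fold difference kills any leaf pattern that does not depend on all $p$ coordinates inside $B$. Conditioning on $D$ and taking a union bound over which levels land in each interval gives
\[
\Pr[\tf(B)\neq 0\mid D]\lesssim D^p\prod_v |F_S(t_v)-F_S(s_v)| \lesssim D^p\,\nu(B).
\]
Given the structure, $\tf(B)$ is a signed sum of at most $2^p$ Gaussian leaf values, so its second moment is bounded by a constant. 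Hence $\E[\tf(B)^2]\lesssim \E[D^p]\,\nu(B)$, and the off-diagonal terms are $\lesssim \nu(B)\nu(C)$.

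For the diagonal term, both $\tf(B)$ and $\tf(C)$ must be nonzero, which needs splits in all $2p$ relevant intervals. The intervals of adjacent blocks share at most one coordinate direction but are disjoint, so a similar union bound gives a probability $\lesssim \E[D^{2p}]\,\nu(B)\nu(C)$. This is exactly where the hypothesis $\E[D^{2p}]<\infty$ is used.

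The main obstacle is this last step: bookkeeping the joint event for adjacent blocks, and making sure the $D^{2p}$ counting is correct when one level's split can serve only one interval. An alternative I would try if it becomes awkward is to bound the diagonal term crudely by $T^{-1}\E[\tf(B)^4]^{1/2}\E[\tf(C)^4]^{1/2}\lesssim T^{-1}\sqrt{\nu(B)\nu(C)}$. One would then handle blocks with $\nu\lesssim T^{-1}$ separately, using that $\bf_T$ has only finitely many jumps, via the modified criterion in \citet{bickel1971convergence} that allows a $T$-dependent grid. With the moment bound established, Bickel--Wichura gives tightness, and Prokhorov together with Lemma~\ref{lem-clt-fdd} identifies the limit as $\GP(0,k_\b)$, completing the proof.
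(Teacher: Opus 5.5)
Your architecture (f.d.d.\ convergence plus a Bickel--Wichura fourth-moment criterion with $\gamma_1=\gamma_2=2$) matches the paper's. Your off-diagonal bounds and your union bound $\Pr[h(B)\neq0\mid D]\lesssim D^p\nu(B)$ are fine.

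\textbf{The gap is the diagonal term, and better bookkeeping will not fix it.} Neighbouring blocks in the Bickel--Wichura sense abut along a common face. They have \emph{identical} intervals in $p-1$ coordinates and are adjacent in only one, so your premise that they ``share at most one coordinate direction but are disjoint'' is backwards. A split in a shared interval serves both increments. Hence $h(B)h(C)\neq0$ needs splits in only $p+1$ intervals, not $2p$. The diagonal term is therefore of order $T^{-1}\nu(B)\nu(C)/\nu(B\cap C)$, with $\nu(B\cap C)$ the $(p-1)$-volume of the shared face, rather than $T^{-1}\nu(B)\nu(C)$.

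Concretely, take $p\ge2$ and $\Pr(D\ge p+1)>0$. Let $B=(0,\tfrac12]\times J$ and $C=(\tfrac12,1]\times J$ with $J=(a,a+\eta]\times(0,1]^{p-2}$.
\begin{itemize}
\item One tree separates all vertices of both blocks with probability $\asymp\eta$.
\item On that event both increments are nondegenerate Gaussians, so $\E[g_T(B)^2g_T(C)^2]\gtrsim T^{-1}\eta$, while $\nu(B)\nu(C)=\eta^2/4$.
\item For any fixed $T$, a bound $\lesssim(\mu(B)\mu(C))^\beta$ with $\beta>1/2$ would force the second marginal of $\mu$ to give every interval of length $\eta$ mass $\gtrsim(\eta/T)^{1/(2\beta)}$. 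No finite measure does this.
\item The probability form of the condition fails the same way (take $\lambda\asymp T^{-1/2}$).
\end{itemize}
So $g_T$ lies in no class $\cC(\beta,4)$ for finite $T$, and Bickel--Wichura's Theorem~3 cannot be invoked. This is exactly the obstruction the paper points out.

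\textbf{Your fallback does not close the gap.} The Cauchy--Schwarz bound $T^{-1}\sqrt{\nu(B)\nu(C)}$ is the excluded borderline $\beta=1/2$. It therefore leaves precisely the hard pairs (volume below roughly $1/T$) to an argument you have not given. A grid-based variant of the criterion, as used for partial-sum processes, relies on the process being constant on the cells of a fixed deterministic grid. The roughly $T\,\E[D]$ discontinuities of $g_T$ instead sit at random, continuously distributed split points.

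What is needed is a maximal inequality that tolerates the extra term, and the paper supplies one:
\begin{itemize}
\item It defines a class $\cC_T$ by $\Pr[\min\{|\Delta_f(B)|,|\Delta_f(C)|\}\ge\lambda]\le K_p\lambda^{-4}\nu(B)\nu(C)\{1+1/(T\nu(B\cap C))\}$ over compatible pairs.
\item It verifies membership of $g_T$ using your four-case expansion. The diagonal case, bounded via $\E[D^{2p}]$, is where the factor $1/\nu(B\cap C)$ appears.
\item It re-proves Bickel--Wichura's Theorem~1 for $\cC_T$ by induction on $p$. This also removes the vanishing-on-the-lower-boundary requirement via $\norm{f}_\infty\le pM''(f)+\max_{\text{vertices}}|f|$.
\end{itemize}
Your reduction of the boundary issue to faces (each again a symmetric BART function) is a reasonable alternative for that part. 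On faces of dimension at least two, however, it runs into the same diagonal-term problem.
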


\begin{proof}[Proof Sketch]
Given Lemma~\ref{lem-clt-fdd}, we must show that the sequence
\((\bf_T)_{T=1}^\infty\) is tight in \(\D_p\). To do so, we first prove
a modified version of a key result from \citet{bickel1971convergence},
which establishes that if a stochastic process belongs to a specific
class, then the sequence is tight in \(\D_p\). We then prove that
\(\bf_T\) belongs to this class of functions, allowing us to establish
weak convergence in the infinite-tree limit.

More specifically, the corollary to \citet{bickel1971convergence}
Theorem 2 provides sufficient conditions for tightness to hold. The main
condition is that a certain modulus of continuity, defined essentially
to ignore a finite number of discontinuities, converge uniformly to
zero. This modulus contains a supremum over all pairs of points in the
unit hypercube. Their theorem 3 replaces this supremum with another that
is only over adjacent triplets of points, as long as the sequence of
stochastic processes belongs to a class \(\cC(\beta, \gamma)\).
Informally, the class \(\cC(\beta, \gamma)\) constrains the fluctuations
in the stochastic process for a given measure across adjacent blocks.
\(\beta\) controls the decay in block size, while \(\gamma\) controls
the deviations in the stochastic process. The advantage of working with
\(\cC(\beta,\gamma)\) is that membership can be established via a moment
bound on elements in the sequence, which is easier to work with.
Unfortunately, a key requirement in \citet{bickel1971convergence} is
that the stochastic processes ``vanish on the lower boundary,'' i.e.,
are fixed at zero whenever any coordinate is zero. In our specific case,
\(\bf_T\) is not fixed at zero whenever a coordinate is zero.
Furthermore, the original moment bound is not satisfied by \(\bf_T\) for
finite \(T\). As a result, we cannot directly apply their results.

We develop an alternative \(T\)-dependent class of functions
\(\cC_T([0,1]^p, \R)\) that is analogous to \(\cC(2,4)\) of Bickel and
Wichura, but does not require vanishing on the lower boundary.
Helpfully, our alternative class \(\cC_T([0,1]^p, \R)\) allows for
relaxations of the moment bound. We then prove a version of their
Theorem 1 for \(\cC_T([0,1]^p, \R)\) using a similar approach via
induction on \(p\) (i.e., Lemma~\ref{lem-condition-bound}), which allows
us to establish an analogous result to their Theorem 3.

It remains to show that in fact \(\bf_T\in\cC_T([0,1]^p, \R)\) for all
\(T\), i.e., that the moment bound is satisfied. We establish the bound
in Lemma~\ref{lem-inc-bc}, relying on a preceding series of technical
lemmas. The bound relies on controlling the probability that the
vertices of a block in \([0,1]^p\) end up in different leaf nodes of a
random tree. If any pair of vertices are in the same leaf node, then the
key function increment whose moment we must control is zero, and so the
probability of all vertices being in different leaf nodes critically
controls our upper bound. It is here that the key features of the BART
model, and the assumptions of the theorem, are used. The negative
association of the split locations conditional on \(D\) helps factor
joint probabilities over \(p\) dimensions into a product over each
dimension, and Lipschitz continuity of \(F_S\) allows us to bound these
probabilities by the side lengths of the block. The finite \(2p\)-th
moment of \(D\) ensures that there are not too many splits, which would
otherwise make the probability of all vertices being in different leaf
nodes too large.
\end{proof}

The conditions needed for convergence are relatively weak and encompass
the BART priors used in practice, in which \(D\) is light-tailed and
\(S\) is uniform.

\subsection{Corresponding kernel and RKHS}\label{sec-rkhs}

Having shown that the BART model converges to a GP, we now investigate
the properties of the kernel \(k_\b\) and its corresponding reproducing
kernel Hilbert space (RKHS) \(\H_\b\), which characterize the properties
of GP samples and the optimal learning rates for function estimation
from such GPs. The formal result is stated in Proposition~\ref{prp-rkhs}
below, but we first introduce the relevant function spaces and explain
their relationships.

When \(p=1\), Proposition~\ref{prp-rkhs} implies that \(k_\b\) is
equivalent to the Laplacian kernel, and the RKHS \(\H_\b\) is
norm-equivalent to the Sobolev space \[
W_1([0, 1]) := \left\{ f \in L^2([0, 1]) : Df\text{ exists and } Df\in L^2([0, 1]) \right\},
\] where \(Df\) is the weak derivative of \(f\) \citep[see,
e.g.,][]{kanagawa2018gaussian}. When \(p>1\), \(k_\b\) is a product
kernel, and Proposition~\ref{prp-rkhs} establishes that the
corresponding RKHS is norm-equivalent to a tensor product of Sobolev
spaces, which can be characterized analogously to \(W_1([0, 1])\) as \[
S_1([0,1]^p) := \bigotimes_{v=1}^p W_1([0,1]) 
= \left\{ f \in L^2([0, 1]^p) : D^{\vb a} f\in L^2([0, 1]^p) \text{ for all } \norm{\vb a}_\infty \le 1 \right\}.
\] where \(D^{\vb a}\) is the weak partial derivative of \(f\) along the
variables indexed by \(\vb a\); for example, if \(p=2\) then
\(D^{(1, 1)}f\) is the (weak) mixed partial derivative of \(f\). This
space is also referred to as a Sobolev space with dominating mixed
derivatives. We can now state the result.

\begin{proposition}[]\protect\hypertarget{prp-rkhs}{}\label{prp-rkhs}

Suppose \(\Pr(D\ge p)>0\) and \(\E[D^p]<\infty\), and \(F_S\) is
Lipschitz continuous. Then there exists constants \(0<c<C<\infty\) such
that \[
cK_\otimes\preceq k_\b\preceq CK_\otimes,
\] where
\(K_\otimes(\bu,\bu'):=\prod_{v=1}^p \exp(-\left|u_v-u_v'\right|)\) is
the \(\ell_1\)-Laplacian kernel on \([0, 1]^p\), and the constants
\(c, C\) depend on the noise variance \(\sigma^2_\mu\), the dimension
\(p\) and the depth distribution \(f_D\). Consequently, the RKHS
\(\H_\b\) corresponding to \(k_\b\) is norm-equivalent to the tensor
product Sobolev space \(S_1([0,1]^p)\).

\end{proposition}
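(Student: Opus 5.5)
The plan is to reduce $k_\b$ to a weighted sum of product kernels built from one univariate kernel, and then compare each summand with $K_\otimes$ in the Loewner order, using two facts throughout:
\begin{itemize}
\item Loewner inequalities between psd kernels are preserved under products (Schur product theorem) and under nonnegative sums;
\item $k_1\preceq a\,k_2$ if and only if $\H_{k_1}\subseteq\H_{k_2}$ with $\norm{f}_{k_2}^2\le a\norm{f}_{k_1}^2$.
\end{itemize}

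\textbf{Step 1: change of variables.} Set $u_v=F_S(x_v)$ and write $w_v=f_V(v)$, so that $\sum_v w_v=1$. Since $F_S$ is Lipschitz, and $f_S$ is continuous and strictly positive on $[0,1]$ (hence bounded below), $F_S$ is bi-Lipschitz. Composition with a bi-Lipschitz map of each coordinate preserves $W_1([0,1])$ with equivalent norms, and therefore preserves $S_1([0,1]^p)$. So it suffices to prove the kernel sandwich in the $\bu$ variables, with $K_\otimes$ evaluated at $\bu$. I will then transfer it back to $\bx$ by noting that $K_\otimes(F_S(\cdot),F_S(\cdot))$ and $K_\otimes$ have norm-equivalent RKHSs, both equal to $S_1$.

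\textbf{Step 2: expansion into product kernels.} Let $\kappa(t,t'):=1-|t-t'|$ on $[0,1]$. We have $\kappa(t,t')=\min(t,t')+\min(1-t,1-t')$, which is psd, and its RKHS is $W_1([0,1])$ with an equivalent norm; it also contains the constants. Because $\sum_v w_v=1$,
\[
1-d_\b(\bx,\bx')=\sum_v w_v\,\kappa(u_v,u_v').
\]
Proposition~\ref{prp-tree-cov-exact} and the multinomial theorem then give
\[
k_\b=\sigma_\mu^2\sum_{k\ge1}f_D(k)\sum_{|\vb m|=k}\binom{k}{\vb m}\prod_v w_v^{m_v}\,\kappa^{m_v}(u_v,u_v'),
\]
with the convention $\kappa^0\equiv1$. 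This writes $k_\b$ as a nonnegative combination of product kernels, which is the structure both bounds will exploit.

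\textbf{Step 3: lower bound.} Since $\Pr(D\ge p)>0$, there is a $k\ge p$ with $f_D(k)>0$, and hence a multi-index $\vb m$ with $|\vb m|=k$ and every $m_v\ge1$. Dropping all other (psd) terms gives
\[
k_\b\succeq c_0\prod_v\kappa^{m_v}.
\]
The constant $1$ lies in $\H_\kappa$, say with $\norm{1}_\kappa^2=a$, so $1\preceq a\,\kappa$. Equivalently $\kappa\succeq a^{-1}$, and hence $\kappa^{m}=\kappa\cdot\kappa^{m-1}\succeq a^{-(m-1)}\kappa$. Finally, $\kappa$ and $e^{-|t-t'|}$ both have RKHS $W_1([0,1])$ with equivalent norms, so $\kappa\succeq c'e^{-|t-t'|}$. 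Taking the product over $v$ yields $k_\b\succeq cK_\otimes$.

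\textbf{Step 4: upper bound (the main obstacle).} Here every summand must be bounded, so I need
\[
\kappa^{m}\preceq C_m\,e^{-|t-t'|}\quad\text{with }C_m\text{ growing at most linearly in }m,\qquad 1\preceq C_0\,e^{-|t-t'|}.
\]
Heuristically, $\kappa^m\approx e^{-m|t-t'|}$, a Laplacian kernel with length scale $1/m$. For the Laplacian kernel $e^{-\lambda|t-t'|}$ on an interval, the RKHS norm is $\asymp\lambda\norm{f}_2^2+\lambda^{-1}\norm{f'}_2^2$ plus boundary terms, which gives a comparison constant of order $\lambda$ (for $\lambda\ge1$) against $e^{-|t-t'|}$.

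I would make this rigorous in one of two ways. The first is to bound $\norm{f}_{W_1}$ by $C\,m\,\norm{f}_{\kappa^m}$ via the feature representation of $\kappa^m$ as products of $m$ indicator-type features. The second, which I would try first, is the direct route: for $t,t'\in[0,1]$,
\[
\kappa^m(t,t')=(1-|t-t'|)^m\le e^{-m|t-t'|}.
\]
A pointwise inequality does not give Loewner order, however. So the concrete plan is to compute the spectral (Fourier-type) representation of $\kappa^m$ extended to $\R$, namely $(1-|s|)_+^m$, whose Fourier transform decays like $m\,\omega^{-2}$. I would then compare it with the spectral density $(1+\omega^2)^{-1}$ of $e^{-|s|}$, getting a ratio bounded by $Cm$ uniformly in $\omega$; restricting both kernels to $[0,1]$ preserves the order.

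Given $C_m\lesssim m$, each summand satisfies
\[
\prod_v\kappa^{m_v}\preceq\prod_v C\max(m_v,1)\,e^{-|u_v-u'_v|}\preceq C^p k^p K_\otimes .
\]
The multinomial weights sum to $1$ for each $k$, so
\[
k_\b\preceq C\sigma_\mu^2\sum_k f_D(k)\,k^p\,K_\otimes,
\]
which is finite because $\E[D^p]<\infty$.

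\textbf{Conclusion.} Combining Steps 3 and 4 with the change of variables in Step 1 yields $cK_\otimes\preceq k_\b\preceq CK_\otimes$. Norm equivalence of $\H_\b$ with $S_1([0,1]^p)$ then follows from the RKHS inclusion criterion and the standard identification of the RKHS of $K_\otimes$ with the tensor product $\bigotimes_v W_1([0,1])$.
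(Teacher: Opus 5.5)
Your proof is correct. Most of it coincides with the paper's: the change of variables $u_v=F_S(x_v)$, the multinomial expansion of $(1-d_\b)^k$ into tensor products of powers of $\kappa(t,t')=1-|t-t'|$, and the Fourier comparison $\widehat{(1-|s|)_+^m}(\omega)\le Cm(1+\omega^2)^{-1}$ for the upper bound. The paper obtains the $4m/\omega^2$ tail by integrating by parts twice. The final summation against $f_D$ using $\E[D^p]<\infty$ is also the same.

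The genuine difference is the lower bound. The paper proves $a_m L\preceq\kappa^m$, with $L(t,t')=e^{-|t-t'|}$, separately for each $m\ge2$ by a second spectral estimate. It shows that the bracketed factor $1-(m-1)\int_0^1(1-t)^{m-2}\cos(\omega t)\,dt$ is at least $b_m\min\{\omega^2,1\}$, and it treats $m=1$ on its own through the Brownian-motion decomposition.

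You instead use the fact that constants lie in $\H_\kappa$. Indeed $\kappa(0,\cdot)+\kappa(1,\cdot)\equiv1$, so $\|1\|_{\H_\kappa}^2\le 2$ and $\kappa\succeq\tfrac12$. The Schur product theorem then gives $\kappa^m\succeq 2^{-(m-1)}\kappa$, so only the single $m=1$ comparison with $L$ is needed.

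What your route buys:
\begin{itemize}
\item It is shorter.
\item It avoids the positivity-and-limits argument for a non-explicit $b_m$.
\item It makes the lower-bound constants explicit.
\end{itemize}
Its only cost is a constant that degrades geometrically in $m$. This is harmless, because the lower bound only ever uses one fixed depth $d_0\ge p$ and one fixed count vector.

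Your final step transfers the sandwich back to the $\bx$ coordinates, using norm-equivalence of the RKHSs of $K_\otimes\circ F$ and $K_\otimes$ together with the inclusion criterion. That step is also valid. It slightly sharpens the paper, which proves the kernel ordering only in the transformed coordinates and pulls back just the RKHS statement.
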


Intuitively, the moment requirement on \(D\) ensures that the trees are
not too deep, which would lead to functions that are too flexible; a
stronger moment requirement is needed for GP convergence. The
requirement that \(\Pr(D\ge p)>0\) ensures that trees exist that can
capture \(p\)-way interactions, which is necessary for the RKHS to
contain all functions in \(S_1([0, 1]^p)\).

We will say that a symmetric BART prior is \emph{regular} if it
satisfies the prior conditions of Proposition~\ref{prp-rkhs}. Usefully,
the Poisson-distributed \(D\) of Corollary~\ref{cor-cov-pois} is regular
for all \(p\), as is the standard BART depth prior introduced by
\citet{chipman2010bart}. Note that regularity is slightly weaker than
the conditions required for Theorem~\ref{thm-gp} above, which rely on
more finite moments of \(D\).

It is also instructive to compare the definition of \(S_1([0, 1]^p)\) to
that of the \(p\)-dimensional first-order Sobolev space \[
W_1([0, 1]^p) := \left\{ f \in L^2([0, 1]^p) : D^{\vb a} f\in L^2([0, 1]^p) \text{ for all } \norm{\vb a}_1 \le 1 \right\}.
\] \(S_1([0, 1]^p)\) is a strictly smaller space than \(W_1([0, 1]^p)\),
since it requires more (weak) mixed partial derivatives to exist.
Consequently, the norm on \(S_1([0, 1]^p)\) imposes a much stronger
constraint than the norm on \(W_1([0, 1]^p)\). However, as
\citet{zhang2023regression} discuss, \(S_1([0, 1]^p)\) is a strictly
larger space than the order-\(p\) Sobolev space \(W_p([0, 1]^p)\), which
requires \(p\) derivatives to exist. Those authors also give the
following equivalent characterization of \(S_1([0, 1]^p)\) as the
closure of the set \[
\left\{ f = \sum_{m=1}^N \prod_{k=1}^p f_{mk} : N<\infty, f_{mk}\in W_1([0, 1]) \right\}
\] with respect to a certain norm. Thus \(\H_\b\) can also be thought of
as the space of functions which are additively decomposable into
\(p\)-way interactions of univariate functions in \(W_1([0, 1])\)---not
dissimilar to symmetric BART functions.\footnote{Of course, the
  component leaf functions in BART do not belong to \(W_1([0, 1])\).}

The more restrictive nature of the space \(S_1([0, 1]^p)\) versus
\(W_1([0, 1]^p)\), and specifically the metric entropy of their unit
balls, translates to improved rates of function estimation, as we
discuss in the next section.

\section{BART as a Random Feature Approximation}\label{sec-rf}

As discussed in the preceding section, the RKHS \(\H_\b\) corresponding
to the BART kernel has attractive properties as a hypothesis class for
function estimation: functions in \(\H_\b\) decompose additively into
\(p\)-way interactions of univariate functions. However, the limiting
Gaussian process requires infinitely many trees. In practice, BART
models use a finite number of trees. This section investigates the
behavior of BART models with a finite number of \emph{random tree
features}, i.e.~models whose trees are drawn i.i.d. from the BART prior
and not treated as parameters, as in standard BART models. We establish
learning rates for these models, which both help explain BART's
theoretical performance and motivate random tree features as a practical
approximation to full BART modeling in their own right. Usefully, our
rates show that under mild regularity conditions, as few as
\(T_n \asymp n^{1/3} \log(n)^{1+2(p-1)/3}\) trees are sufficient to
achieve a minimax-optimal rate for regression in \(S_1([0,1]^p)\). This
rate avoids the curse of dimensionality and is faster than \(n^{-1/4}\)
for any \(p\).

We consider a standard nonparametric regression setup. Assume
\((X_i, Y_i)_{i=1}^n\iid \Pr\); then we can write \[
Y = g_0(\bx) + \epsilon,
\] for the conditional expectation function (CEF)
\(g_0(\bx) := \E[Y\mid X=\bx]\) and a mean-zero error term \(\epsilon\).
Throughout, we impose two regularity conditions on \(\Pr\):

\begin{enumerate}
\def\labelenumi{(\arabic{enumi})}
\tightlist
\item
  \(X\) has density \(\rho\) with respect to Lebesgue measure which is
  bounded above and below:
  \(0 < \rho_- \le \rho(x) \le \rho_+ < \infty\).
\item
  The noise is sub-exponential: there exist \(\sigma^2,B<\infty\) with
  \(\E\left[\left|Y\right|^c \mid X\right] \le \frac{1}{2} c! \sigma^2 B^{c-2}\)
  for all \(c \geq 2\).
\end{enumerate}

The main modeling assumption we make is that \(g_0\in\mathcal{H}_\b\).
More precisely, we assume that the conditional expectation function
admits a version that belongs to \(\mathcal{H}_\b\) with
\(\norm{g_0}_{\mathcal{H}_\b} \leq R\) for some constant \(R < \infty\).
Our results below do not require a priori knowledge of \(R\).

\subsection{Kernel ridge regression}\label{kernel-ridge-regression}

A well-studied estimator for \(g_0\) in this setting is the kernel ridge
regression estimate \[
\hat g_n = \argmin_{g \in \mathcal{H}_\b} \frac{1}{n} \sum_{i=1}^n \left(Y_i - g(X_i) \right)^2 + \lambda \norm{g}_{\mathcal{H}_\b}^2,
\] for regularization parameter \(\lambda > 0\). The statistical
properties of kernel ridge regression and other kernel learning methods
are strongly determined by the eigenvalues of their corresponding
integral operator
\begin{equation}\protect\phantomsection\label{eq-kernel-op}{
\Sigma_k f(\bx) := \int_{[0, 1]^p} k(\bx, \bx') f(\bx') \rho(\bx') d\bx',
}\end{equation} where \(\rho\) is the density of \(\bx\). Intuitively,
the slower the rate of eigenvalue decay, the larger the corresponding
RKHS, and realizations from the GP will be less smooth. Note that while
the RKHS \(\mathcal{H}_\b\) depends on \(k_\b\), the operator
\(\Sigma_\b := \Sigma_{k_\b}\) depends on \emph{both} \(k_\b\) and the
covariate density \(\rho\).

The following result characterizes the eigenvalues of the operator
\(\Sigma_\b\) under a regular BART prior and the assumed regularity
conditions on \(X\). It follows from the sandwiching relationship
between \(k_\b\) and the \(\ell_1\)-Laplacian kernel established in
Proposition~\ref{prp-rkhs}.

\begin{corollary}[]\protect\hypertarget{cor-eigen}{}\label{cor-eigen}

If the covariate density \(\rho\) is bounded above and bounded away from
zero on \([0,1]^p\), then for a regular BART prior, the eigenvalues
\(\{\varsigma_j\}_{j=1}^\infty\) of \(\Sigma_\b\) satisfy \[
\varsigma_j \asymp j^{-2}\log^{2(p-1)}(j).
\]

\end{corollary}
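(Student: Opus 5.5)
The plan is to move the known eigenvalue asymptotics of the $\ell_1$-Laplacian product kernel under Lebesgue measure over to $\Sigma_\b$ under the density $\rho$, using Proposition~\ref{prp-rkhs} and min--max (Courant--Fischer) comparison arguments. The key observation is that the eigenvalues of an integral operator $\Sigma_k$ on $L^2(\rho)$ coincide with those of the covariance-type operator $S_k^* S_k$, where $S_k:\H_k\to L^2(\rho)$ is the inclusion. They therefore admit the variational characterization
\[
\varsigma_j(k,\rho)=\max_{\dim V=j}\ \min_{0\neq f\in V\subseteq\H_k}\frac{\norm{f}_{L^2(\rho)}^2}{\norm{f}_{\H_k}^2}.
\]
This expresses each eigenvalue as a ratio of two quadratic forms. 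The whole argument then reduces to showing that both forms change by at most bounded constants when we replace $k_\b$ by $K_\otimes$ and $\rho$ by Lebesgue measure.

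The first step handles the numerator. Because $\rho_-\le\rho\le\rho_+$, we have $\rho_-\norm{f}_{L^2(\mathrm{Leb})}^2\le\norm{f}_{L^2(\rho)}^2\le\rho_+\norm{f}_{L^2(\mathrm{Leb})}^2$. Hence $\rho_-\,\varsigma_j(k,\mathrm{Leb})\le\varsigma_j(k,\rho)\le\rho_+\,\varsigma_j(k,\mathrm{Leb})$ for any kernel $k$.

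The second step handles the denominator. The relation $cK_\otimes\preceq k_\b\preceq CK_\otimes$ from Proposition~\ref{prp-rkhs} gives, by the standard Aronszajn comparison of RKHSs, that $\H_\b=\H_{K_\otimes}$ as sets, with $C^{-1}\norm{f}_{K_\otimes}^2\le\norm{f}_{\H_\b}^2\le c^{-1}\norm{f}_{K_\otimes}^2$. Substituting into the min--max formula yields $c\,\varsigma_j(K_\otimes)\le\varsigma_j(k_\b)\le C\,\varsigma_j(K_\otimes)$. A convenient alternative route is to note that $cK_\otimes\preceq k_\b$ means $\Sigma_{k_\b}-c\Sigma_{K_\otimes}$ is a positive operator, so Weyl monotonicity applies directly; I would use whichever version is cleaner to write.

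The third step is to compute the eigenvalues of $\Sigma_{K_\otimes}$ under Lebesgue measure on $[0,1]^p$. This operator is the tensor product of $p$ copies of the one-dimensional Laplacian operator on $[0,1]$, so its eigenvalues are all products $\prod_{v}\lambda_{m_v}$. The one-dimensional eigenvalues are explicit: they equal $2/(1+\omega_m^2)$, where the $\omega_m$ solve a transcendental equation with $\omega_m\asymp m$, so $\lambda_m\asymp m^{-2}$. It remains to count products. For $\epsilon>0$, the number of tuples with $\prod_v m_v^{-2}\gtrsim\epsilon$ equals the number of lattice points in $\{\prod_v m_v\le\epsilon^{-1/2}\}$, which is $N(\epsilon)\asymp \epsilon^{-1/2}\log^{p-1}(\epsilon^{-1/2})$. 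Inverting $j=N(\varsigma_j)$ gives $\varsigma_j\asymp j^{-2}\log^{2(p-1)}j$. This is the hyperbolic-cross count familiar from \citet{lin2000tensor}, and I would cite it rather than redo it.

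The main obstacle is carrying the constants through this counting. The $\asymp$ bounds on each $\lambda_m$ must be uniform, and the inversion of $N$ must tolerate the resulting multiplicative slack. That slack enters as a factor of order $a^{p}$ on $N(a\epsilon)$, and since the log factor is slowly varying, such a factor changes $\varsigma_j$ only by a constant. The first two steps are routine by comparison.
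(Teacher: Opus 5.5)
Your proposal is correct and has the same structure as the paper's proof. You compare the density $\rho$ to Lebesgue measure and $k_\b$ to $K_\otimes$ by min--max arguments (the latter through Proposition~\ref{prp-rkhs}), then count products of the univariate Laplacian eigenvalues $\asymp m^{-2}$ to get the hyperbolic-cross rate. The one technical difference is minor: your Rayleigh quotient $\norm{f}_{L^2(\rho)}^2/\norm{f}_{\H_k}^2$ handles both comparisons at once using only RKHS norm-equivalence, whereas the paper routes the density and split-distribution step through the coordinate change and $AA^*$/$A^*A$ argument of Lemma~\ref{lem-coord-meas}.
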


The eigenvalue decay rate established in Corollary~\ref{cor-eigen}
allows \(\hat g_n\) to achieve a minimax-optimal rate for regression in
\(S_1([0, 1]^p)\), up to logarithmic factors. For example, the following
result is immediate from Theorem 4 of \citet{bak2025effect}.

\begin{proposition}[]\protect\hypertarget{prp-krr-rate}{}\label{prp-krr-rate}

Under the stated assumptions on the data, kernel ridge regression using
\(k_\b\) for a regular BART prior achieves \[
\sqrt{\E(\hat g_{n}(X) - g_0(X))^2}
\lesssim n^{-1/3}\log^{p-1} n.
\]

\end{proposition}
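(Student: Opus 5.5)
The plan is to read off Proposition~\ref{prp-krr-rate} from Theorem~4 of \citet{bak2025effect}, which gives the $L^2(\rho)$ error of kernel ridge regression in terms of the eigenvalue decay of the integral operator. First, I will check that each hypothesis of that theorem is met in our setting:
\begin{itemize}
\item a bounded measurable kernel, since $k_\b\le\sigma^2_\mu$;
\item sub-exponential noise, which is exactly regularity condition (2);
\item a source condition, namely $g_0\in\H_\b$ with $\norm{g_0}_{\H_\b}\le R$, which is the case $r=1/2$ of the standard $\Sigma^r$-range source condition;
\item a capacity condition, given as a polynomial-times-logarithmic eigenvalue decay.
\end{itemize}
The capacity condition comes from Corollary~\ref{cor-eigen}. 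It applies because the prior is regular and $\rho$ is bounded above and below, and it gives $\varsigma_j\asymp j^{-2}\log^{2(p-1)} j$.

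Next, I will translate this decay into the effective dimension $\cN(\lambda)=\sum_j \varsigma_j/(\varsigma_j+\lambda)$. Splitting the sum at the index $j^*$ where $\varsigma_{j^*}\approx\lambda$ gives $j^*\asymp \lambda^{-1/2}\log^{p-1}(1/\lambda)$. The tail contributes $\lambda^{-1}\sum_{j>j^*}\varsigma_j\lesssim j^*$, so $\cN(\lambda)\asymp\lambda^{-1/2}\log^{p-1}(1/\lambda)$.

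The usual bias--variance bound for kernel ridge regression then reads
\[
\E(\hat g_n(X)-g_0(X))^2\lesssim \lambda R^2+\frac{\sigma^2\,\cN(\lambda)}{n}.
\]
I will balance the two terms with $\lambda\asymp n^{-2/3}\log^{c}n$ for a suitable power $c$. With that choice both terms are of order $n^{-2/3}$ times a polylogarithmic factor. Taking square roots gives $n^{-1/3}\log^{p-1}n$, or something better. Since the statement only claims an upper bound with $\log^{p-1}$, a slightly loose logarithmic power is harmless: choosing $\lambda\asymp n^{-2/3}$ already gives variance $\lesssim n^{-2/3}\log^{p-1}n$ and bias $\lesssim n^{-2/3}$.

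I expect the main obstacle to be matching the exact form of the hypotheses in \citet{bak2025effect}, rather than any analysis. Their theorem may require an embedding or $L^\infty$ condition, for example $\norm{\sum_j \varsigma_j^{\alpha} e_j^2}_\infty<\infty$ for some $\alpha<1$, where the $e_j$ are the eigenfunctions of $\Sigma_\b$. If so, I will verify it through the norm equivalence of Proposition~\ref{prp-rkhs}. Interpolation spaces $[L^2,\H_\b]_\alpha$ between $L^2$ and $S_1([0,1]^p)$ embed into $L^\infty$ whenever $\alpha>1/2$, because mixed Sobolev smoothness above $1/2$ in each coordinate gives continuity. Since $\rho$ is bounded above and below, these interpolation spaces agree with the Lebesgue versions.

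Finally, the theorem may be stated in high probability rather than in expectation. In that case I will integrate the tail bound, using that $\hat g_n$ is bounded in $L^2$ through the regularization, to recover the displayed bound in expectation.
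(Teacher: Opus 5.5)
Your proposal matches the paper's argument: the paper treats the result as immediate from Theorem~4 of \citet{bak2025effect}, with the capacity condition supplied by Corollary~\ref{cor-eigen} and the source condition $g_0\in\H_\b$, which is exactly your first step. Your extra effective-dimension and bias--variance bookkeeping agrees with the paper's own Lemma~\ref{lem-eff-dim-bart}, and it simply makes explicit why the stated, somewhat loose, $\log^{p-1} n$ factor is enough.
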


In practice, however, working with \(k_\b\) directly requires \(O(n^3)\)
computation, which is impractical for large \(n\). We next show that the
leading rate in Proposition~\ref{prp-krr-rate} can be obtained with a
finite number of trees.

\subsection{Random tree features}\label{random-tree-features}

Recall that \(\Psi_j(\vb x)\) is the vector of indicators for membership
in the leaf nodes of tree \(j\) given input \(\vb x\). Define
\(\phi_T(\vb x) = T^{-1/2} (\Psi_1(\vb x), \dots, \Psi_T(\vb x))\) as
the concatenated vector of all leaf indicators across all trees,
rescaled by \(T^{-1/2}\). We call the random vector \(\phi_T(\vb x)\) a
\emph{random tree feature} representation of the covariates \(\vb x\).

Let \(\vbg\Phi \in \R^{n \times M}\) represent the matrix of random tree
features, where \(M=\sum_{j=1}^T 2^{D_j}\) is the total number of leaf
nodes across all trees. Then consider an approximate estimator
\(\hat g_n\) obtained by performing ridge regression on the random
features, \begin{equation}\protect\phantomsection\label{eq-rtf-ridge}{
\begin{aligned}
\hat g_{T,n}(\vb x) = \phi_T(\vb x)^\top\hat\beta, \quad\text{where}\quad
\hat\beta &= \argmin_{\beta\in\R^M} \frac{1}{n} \sum_{i=1}^n \left(Y_i - \phi_T(X_i)^\top\beta\right)^2 + \frac{\lambda}{\sigma_\mu^2}\norm{\beta}_2^2 \\
&= \left(\vbg\Phi^\top\vbg\Phi + (n\lambda/\sigma_\mu^2) I\right)^{-1}\vbg\Phi^\top\vb y.
\end{aligned}
}\end{equation} This estimator is approximate in two senses: it
approximates the infinite-tree kernel ridge estimator \(\hat g_n\), and
it approximates the full BART model with \(T\) trees, where the tree
structures are learned in addition to the leaf parameters \(\vb*\mu\).

Intuitively, as \(T \to \infty\),
\(\sigma^2_\mu(\vbg\Phi\vbg\Phi^\top)_{ij}\to k_\b(\bx_i, \bx_j)\).
Early work in random features established rates on this convergence
\citep{rahimi2007random}. However, without further assumptions,
convergence of \(\vbg\Phi\vbg\Phi^\top\) entrywise does not imply
convergence of its inverse, nor does it cleanly map onto a convergence
rate for \(\hat g_{T,n}\) itself. Thus studying the convergence of
\(\hat g_{T,n}\) is more complicated than for \(\hat g_n\) or for the
kernel matrix \(\sigma^2_\mu(\vbg\Phi\vbg\Phi^\top)\). The specific
setting here poses unique theoretical challenges as well. While recent
work has established convergence rates for a large class of random
feature estimates \citep[e.g.,][]{rudi2017generalization}, these results
generally require the random features to be univariate and continuous.
These properties are not satisfied by random tree features and therefore
cannot be directly applied.

We extend results from \citet{rudi2017generalization} to explicitly
consider random tree features and allow for jump discontinuities and
random dimension \(2^D\) in the features. We establish the following
minimax-optimal convergence rate for \(\hat g_{T,n}\), provided the
penalty \(\lambda\) and number of trees \(T\) are chosen appropriately.

\begin{theorem}[]\protect\hypertarget{thm-finite-tree-conv}{}\label{thm-finite-tree-conv}

Under the stated assumptions on the data, for a regular BART prior, if
\(\lambda_n \asymp n^{-2/3}\log(n)^{2(p-1)/3}\) and
\(T_n \ge c_T n^{2/3}\log(n)^{1-2(p-1)/3}\) for some sufficiently large
\(c_T\), then there exists a constant \(n_0\) such that for all
\(n>n_0\), \[
\sqrt{\E(\hat g_{T,n}(X) - g_0(X))^2}
\lesssim n^{-1/3} \log(n)^{(p-1)/3}.
\] Moreover, if \(D\le d^+<\infty\), then the same result holds with
\(T_n \ge c_T n^{1/3}\log(n)^{1 + 2(p-1)/3}\).

\end{theorem}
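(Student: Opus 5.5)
We will follow the operator-theoretic framework of \citet{rudi2017generalization}, adapted to the tree features. Write the random-feature kernel as $k_T(\bx,\bx') = \sigma_\mu^2\phi_T(\bx)^\top\phi_T(\bx') = T^{-1}\sum_{j=1}^T \sigma_\mu^2\Psi_j(\bx)^\top\Psi_j(\bx')$. This is an average of i.i.d. kernels $\sigma_\mu^2\ind\{\bx\adj_j\bx'\}$ with mean $k_\b$, by Proposition~\ref{prp-tree-cov}. Let $\Sigma_\b$ be the integral operator in \eqref{eq-kernel-op}, with eigenvalues as in Corollary~\ref{cor-eigen}. Let $\Sigma_T$ be its random-feature analogue, acting on $\R^M$ as $\E_X[\phi_T(X)\phi_T(X)^\top]$, and let $\hat\Sigma_T$ be its empirical version.

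\textbf{Error decomposition.} We decompose $\hat g_{T,n}-g_0$ into three terms, as Rudi and Rosasco do:
\begin{itemize}
\item a sampling (variance) term, comparing $\hat g_{T,n}$ to the population random-feature ridge solution;
\item a random-feature approximation term, comparing the population random-feature ridge solution to the population kernel ridge solution $g_\lambda=(\Sigma_\b+\lambda)^{-1}\Sigma_\b g_0$;
\item the bias $\norm{g_\lambda-g_0}_{L^2(\rho)}\lesssim \sqrt{\lambda}\,R$, which holds since $g_0\in\H_\b$.
\end{itemize}

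\textbf{Controlling the two stochastic terms.} Both reduce to bounding $\norm{(\Sigma+\lambda)^{-1/2}(\Sigma-\hat\Sigma)(\Sigma+\lambda)^{-1/2}}$ by $1/2$ with high probability. We will use Bernstein inequalities for sums of i.i.d. self-adjoint operators.

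The effective dimension $\cN(\lambda)=\mathrm{tr}(\Sigma_\b(\Sigma_\b+\lambda)^{-1})$ is computed from Corollary~\ref{cor-eigen}. Summing $\varsigma_j/(\varsigma_j+\lambda)$ gives $\cN(\lambda)\asymp\lambda^{-1/2}\log(1/\lambda)^{p-1}$.

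The sampling term then contributes $\cN(\lambda)/n$ to the squared risk. The sub-exponential noise condition supplies the Bernstein moment condition.

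For the random-feature term, Rudi--Rosasco need a bound on $\sup_\bx\norm{(\Sigma_\b+\lambda)^{-1/2}\psi(\cdot,\bx)}$ for a single feature, i.e., a maximal degrees-of-freedom bound $\cN_\infty(\lambda)$. For a single tree $j$, the relevant vector is $\Psi_j$. We will bound
\[
\mathcal{F}_\infty(\lambda)=\sup_{\bx}\sigma_\mu^2\,\Psi_j(\bx)^\top(\text{corresponding operator})\Psi_j(\bx).
\]
The crude bound is $\lesssim \lambda^{-1}$, using $k_\b\le\sigma_\mu^2$ and the fact that each tree has bounded feature norm. Bernstein then requires $T\gtrsim \lambda^{-1}\log(1/\lambda)$. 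With $\lambda_n\asymp n^{-2/3}\log(n)^{2(p-1)/3}$, this gives $T\gtrsim n^{2/3}\log(n)^{1-2(p-1)/3}$, which is the first claim.

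\textbf{Balancing.} We balance $\lambda$ against $\cN(\lambda)/n$: setting $\lambda \asymp \lambda^{-1/2}\log^{p-1}(1/\lambda)/n$ gives the stated $\lambda_n$ and the rate $n^{-1/3}\log(n)^{(p-1)/3}$. Bounds holding with high probability are converted into expectation bounds via a truncation argument; this is where $n_0$ enters.

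\textbf{Main obstacle: the bounded-depth refinement.} When $D\le d^+$, we aim to use the refined bound $T\gtrsim\cN(\lambda)\log(1/\lambda)$ instead of $\lambda^{-1}$. This requires a compatibility condition of the form
\[
\sup_\bx \E_{\text{tree}}\norm{(\Sigma_\b+\lambda)^{-1/2}\Psi(\bx)}^2\lesssim \cN(\lambda),
\]
together with a uniform almost-sure bound. Because the features are discontinuous and vector-valued of random dimension $2^D$, we cannot use Rudi--Rosasco's continuous bounded-feature argument directly.

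Bounded depth keeps the dimension at most $2^{d^+}$. Within a tree, each leaf indicator is a product of at most $d^+$ one-dimensional step functions. Using the sandwich in Proposition~\ref{prp-rkhs} and the tensor-product eigenstructure of the Laplacian kernel, we will bound the "leverage" of a step-function product by the effective dimension up to constants depending on $d^+$ and $p$.

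This leverage bound is the delicate step. The first thing to try is bounding the $\H_\b$-norm of $(\Sigma_\b+\lambda)^{-1/2}$ applied to a smoothed indicator, and controlling the smoothing error using Lipschitz $F_S$ and the bounds on $\rho$. Plugging the result into the Bernstein bound gives $T\gtrsim \lambda^{-1/2}\log^{p-1}(1/\lambda)\log n \asymp n^{1/3}\log(n)^{1+2(p-1)/3}$.
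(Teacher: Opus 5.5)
Your route is the same as the paper's. You use the RR17 decomposition and operator Bernstein bounds driven by $\cF_\infty(\lambda)$. You get $\cN(\lambda)\asymp\lambda^{-1/2}\log^{p-1}(1/\lambda)$ from Corollary~\ref{cor-eigen}. The crude bound $\cF_\infty(\lambda)\le 1/\lambda$ handles the first claim, a per-leaf leverage bound summed over at most $2^{d^+}$ leaves handles the second, and truncation passes to expectations. The first claim goes through this way, modulo a point noted at the end.

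The gap is in the bounded-depth claim. That claim rests entirely on a uniform bound $\langle\psi_R,(\Sigma+\lambda)^{-1}\psi_R\rangle\lesssim\cN(\lambda)$ for every rectangle indicator $\psi_R$. You leave this bound open, and the smoothing route you suggest cannot reach it. To see why, note the identity $\langle\psi,(\Sigma+\lambda)^{-1}\psi\rangle=\inf_f\{\norm{\Sigma^{-1/2}f}_X^2+\lambda^{-1}\norm{\psi-f}_X^2\}$, where $\norm{\Sigma^{-1/2}f}_X\asymp\norm{f}_{S_1}$. So smoothing just picks one $f$. Take a leaf constraining $k$ coordinates and let $f$ be a product of univariate ramps of width $h$. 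The mixed derivative in those $k$ coordinates alone gives $\norm{f}_{S_1}^2\gtrsim h^{-k}$, while $\norm{\psi_R-f}_X^2\asymp h$. The best you can get is therefore $\lambda^{-k/(k+1)}$, and unequal widths do not help, by AM--GM. For $k\ge2$, which occurs whenever $p\ge2$, this is polynomially larger than $\cN(\lambda)$. It would force $T_n\gtrsim n^{4/9}$ or worse, rather than $n^{1/3}$ up to logs. The near-optimal $f$ is a hyperbolic-cross filter, not a single-scale mollifier.

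The missing idea is the paper's spectral estimate, which encodes exactly that hyperbolic-cross structure.
\begin{itemize}
\item The eigenpairs of $L(s,t)=e^{-|s-t|}$ on $[0,1]$ are explicit sinusoids of frequency $\omega_j\ge1$, with $\varsigma_j=2/(1+\omega_j^2)$ and bounded normalizing constants.
\item Hence $|\int_I e_j|^2\lesssim\omega_j^{-2}\lesssim\varsigma_j$, uniformly over all intervals $I$.
\item Tensorizing gives $|\langle\psi_R,e_{\vb j}\rangle|^2\lesssim\varsigma_{\vb j}$, so $\langle\psi_R,(c\Sigma_L^{\otimes p}+\lambda)^{-1}\psi_R\rangle\lesssim\sum_{\vb j}\varsigma_{\vb j}/(c\varsigma_{\vb j}+\lambda)$.
\item The lower sandwich $\Sigma\succeq c\Sigma_L^{\otimes p}$ is available because regularity forces $d^+\ge p$. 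Together with antitonicity of $A\mapsto(A+\lambda)^{-1}$ and operator monotonicity of $t\mapsto t/(t+\lambda)$, it transfers both sides to $\Sigma$ and gives $\lesssim\cN(\lambda)$.
\end{itemize}
Lipschitz $F_S$ and the bounds on $\rho$ enter only through the change to uniform coordinates (Lemma~\ref{lem-coord-meas}), not as a way to control smoothing error.

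Two smaller points. First, the sampling term is naturally controlled by the random-feature effective dimension $\Tr(\Sigma_T(\Sigma_T+\lambda)^{-1})$. You therefore also need its concentration around $\cN(\lambda)$, the analogue of RR17 Proposition 10, which again costs $T\gtrsim\cF_\infty(\lambda)\log(1/(\lambda\delta))$. Second, your displayed compatibility condition is not the relevant quantity. Bernstein needs $\sup_\omega\Tr(\Sigma_\lambda^{-1}\Sigma_\omega)=\sup_\omega\sum_{\vb l}\langle\psi_{\vb l},\Sigma_\lambda^{-1}\psi_{\vb l}\rangle$, a supremum over trees of the leverages of leaf indicators viewed as functions of $\bx$. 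Its average over trees equals $\cN(\lambda)$ with no work, so only the uniform bound carries content.
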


Theorem~\ref{thm-finite-tree-conv} demonstrates two advantages of
regression on random tree features. First, random tree features can
reduce computational complexity compared to both full BART and kernel
ridge regression with \(k_\b\). The latter requires \(O(n^3)\)
computation, while the theorem shows that for bounded tree depths,
\(O(n^{5/3})\) computation is sufficient, up to logarithmic factors. We
note that it is likely possible to relax the boundedness condition on
\(D\) in Theorem~\ref{thm-finite-tree-conv} to a condition on its tails,
and still require only the \(n^{1/3}\) lower bound on \(T_n\), but such
a result would require substantial modifications to the existing proof
strategy. In any event, standard BART priors place vanishingly small
probability on large \(D\) (\(\Pr(D > 8)< 10^{-5}\) under the Poisson
prior) and so the boundedness condition is not restrictive in practice.

Second, random tree feature regression achieves learning rates that
depend only logarithmically on \(p\), thus avoiding the curse of
dimensionality while still learning functions in a rich function space.
Moreover, as the results in Figure~\ref{fig-ablate-intro} suggest, when
\(T\) is large, very little learning of the tree structure in full BART
occurs, so Theorem~\ref{thm-finite-tree-conv} is also helpful as a
simpler-to-analyze model for studying full BART, existing learning rates
for which still suffer from the curse of dimensionality.\footnote{Or
  require strong assumptions about sparsity in the covariates.}

\section{Empirical Studies}\label{sec-sim}

To complement the theoretical findings in the preceding sections, we
conduct three empirical studies of the BART model and the performance of
random tree features. We extend the ablation study of
Section~\ref{sec-ablate} to additional datasets, and consider how tuning
\(\sigma^2_\mu\) affects performance. We then compare the predictive
performance of the proposed random tree features with full BART,
gradient boosted trees, and random forests, and show that random tree
features perform comparably to these alternative methods under default
settings. Finally, we examine uncertainty quantification for Bayesian
linear regression models fit to random tree features, finding that
learning only leaf parameters, as random tree features do, does not
degrade uncertainty quantification compared to the full BART model.

\begin{table}

\begin{minipage}[t]{0.50\linewidth}

{\def\LTcaptype{none} 
\begin{longtable}[]{@{}lrrr@{}}
\toprule\noalign{}
Dataset & \(n\) & \(p\) & BART \(R^2\) \\
\midrule\noalign{}
\endhead
\bottomrule\noalign{}
\endlastfoot
\texttt{diamonds} & \(53\,940\) & \(9\) & \(0.979\) \\
\texttt{abalone} & \(4\,177\) & \(8\) & \(0.548\) \\
\texttt{cane} & \(3\,775\) & \(31\) & \(0.809\) \\
\texttt{amenity} & \(3\,044\) & \(25\) & \(0.718\) \\
\texttt{edu} & \(2\,339\) & \(6\) & \(0.980\) \\
\texttt{budget} & \(1\,729\) & \(10\) & \(0.997\) \\
\texttt{rice} & \(1\,026\) & \(18\) & \(0.982\) \\
\texttt{attend} & \(838\) & \(9\) & \(0.670\) \\
\end{longtable}
}

\end{minipage}%
\begin{minipage}[t]{0.50\linewidth}

{\def\LTcaptype{none} 
\begin{longtable}[]{@{}lrrr@{}}
\toprule\noalign{}
Dataset (\emph{cont.}) & \(n\) & \(p\) & BART \(R^2\) \\
\midrule\noalign{}
\endhead
\bottomrule\noalign{}
\endlastfoot
\texttt{boston} & \(506\) & \(13\) & \(0.893\) \\
\texttt{diabetes} & \(442\) & \(10\) & \(0.514\) \\
\texttt{mpg} & \(392\) & \(7\) & \(0.881\) \\
\texttt{baseball} & \(263\) & \(19\) & \(0.641\) \\
\texttt{cpu} & \(209\) & \(7\) & \(0.881\) \\
\texttt{ais} & \(202\) & \(12\) & \(0.873\) \\
\texttt{servo} & \(167\) & \(4\) & \(0.925\) \\
\texttt{basketball} & \(96\) & \(4\) & \(0.281\) \\
\end{longtable}
}

\end{minipage}%

\caption{\label{tbl-data}Sixteen datasets studied in Sections
\ref{sec-sim-ablate} and \ref{sec-sim-perf}, arranged by sample size
\(n\). The column \(p\) refers to the number of covariates before
one-hot encoding any categorical covariates, and \(R^2\) refers to the
out-of-sample \(R^2\) value of the full BART model with default
parameters, averaged across cross validation splits and replicates.}

\end{table}%

\subsection{Ablation study}\label{sec-sim-ablate}

We repeat the study in Section~\ref{sec-ablate} on fifteen additional
datasets, summarized in Table~\ref{tbl-data}, which are often used in
machine learning examples.\footnote{ For full disclosure, we initially
  ran experiments on seventeen datasets, but dropped two once we
  realized that they had a panel data structure that was not suited to
  BART modeling without additional structure such as fixed effects. For
  example, in one of these, the \texttt{strikes} dataset, a full BART
  fit yielded an average out-of-sample \(R^2\) of \(-0.208\).} The data
vary across sample sizes \(n\), the \(n/p\) ratio, the number of
continuous covariates, and the signal-to-noise ratio, as measured by the
out-of-sample \(R^2\) of the full BART model. As in the
Section~\ref{sec-ablate} study, each comparison is averaged across 20
random train-test splits and between 1 and 5 replicates (depending on
the dataset size), due to the randomness in fitting the models. Each
split holds out 25\% of the data for evaluating \(R^2\).

\begin{figure}

\centering{

\pandocbounded{\includegraphics[keepaspectratio]{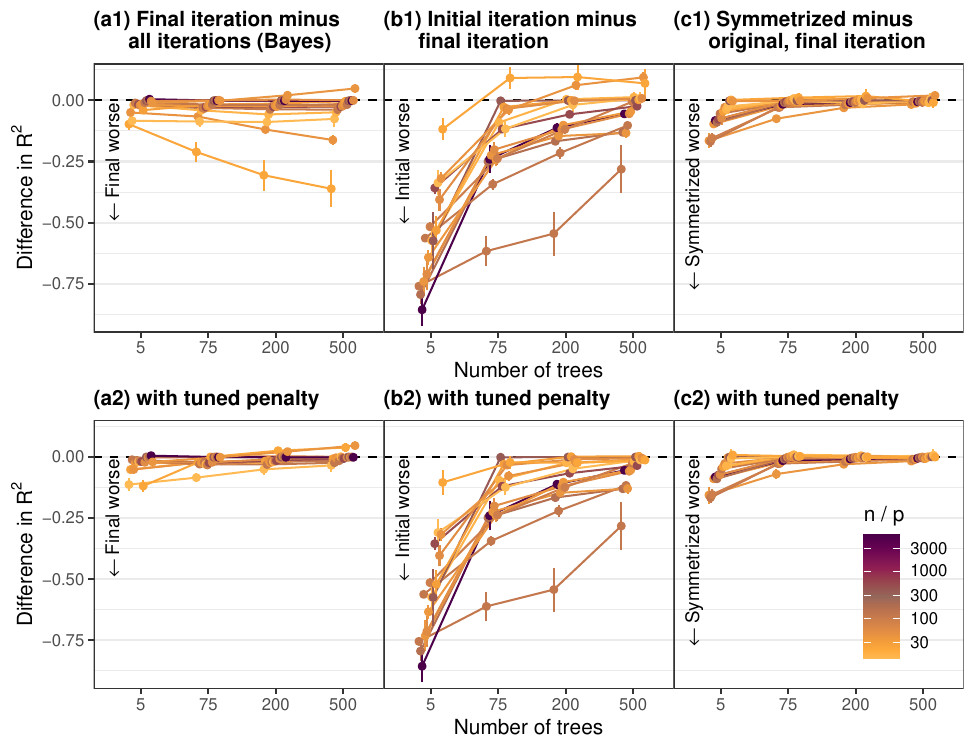}}

}

\caption{\label{fig-ablate-all}\textbf{Full ablation study}. Each panel
compares out-of-sample \(R^2\) values of the posterior mean, for two
different regression fits and different numbers of regression trees,
across 16 different datasets. The average difference in \(R^2\) across
20 cross validation splits and multiple replicates is plotted along with
95\% confidence intervals, with each dataset's line colored according to
the ratio of observations to covariates. The top panel of each pair uses
the original BART prior on the leaf parameters, while the bottom panel
tunes the leaf prior variance using LOOCV. \textbf{(a1) and (a2)
Ablating Bayes:} \(R^2\) for the full posterior mean versus the
posterior mean conditioned on the tree structure at the final MCMC
iteration. \textbf{(b1) and (b2) Ablating tree learning:} \(R^2\)
conditioned on the final-iteration trees versus the initial-iteration
trees. \textbf{(c1) and (c2) Ablating asymmetric trees:} \(R^2\)
conditioned on the final-iteration trees versus a symmetrized version of
those same trees.}

\end{figure}%

The top row of Figure~\ref{fig-ablate-all} shows the results of the
ablation study that correspond directly to
Figure~\ref{fig-ablate-intro}. The overall pattern is extremely similar:
as the number of trees increases, the gain in performance from averaging
across Bayesian uncertainty, learning tree structure, or using
asymmetric trees is minimal.

However, there are several datasets that do not follow this overall
trend, and some datasets that significantly outperform full BART in
panel (b1). We suspect that this is due to two factors: low \(n/p\), and
incorrect tuning of the leaf prior variance \(\sigma^2_\mu\) or the
closely related \(\kappa\) parameter. Indeed, when we re-fit the
comparisons after tuning \(\sigma^2_\mu\) using leave-one-out cross
validation, the results are less variable, and the qualitative patterns
are the same across datasets. Incidentally, the dataset with the highest
gap in panel (b2) is \texttt{cane}, which contains several
high-cardinality categorical covariates; a more careful encoding of
these covariates may improve the performance of full BART and the
ablated models \citep{deshpande2025flexbart}.

\subsection{Predictive performance of random BART
features}\label{sec-sim-perf}

Next, we compare the predictive performance of ridge regression on
random tree features to full BART as well as the two leading tree-based
machine learning methods, gradient boosted trees and random forests.

For each of the 16 datasets in Table~\ref{tbl-data}, we fit three ridge
regression models: one with 75 random tree features, one with 200 such
features, and one where the 200 random tree features are augmented with
the original covariates entering linearly. In each model, we tune the
ridge penalty using leave-one-out cross validation, which can be
computed from the singular value decomposition of the feature matrix, or
an efficient Monte Carlo estimate of generalized cross validation (GCV),
when \(n>1000\). We compare these random tree feature regressions
against BART \citep[\texttt{dbarts},][]{dbarts}, random forests
\citep[\texttt{ranger},][]{ranger}, and gradient boosted trees
\citep[\texttt{xgboost},][]{xgboost}. We use the default settings in
each package, including 75 trees for BART and 500 trees for random
forests. For gradient boosted trees, we use 250 trees (rounds of
boosting), and run the method twice: once with the defaults, and once
5-fold cross-validating key hyperparameters, including the learning rate
and maximum tree depth. As in the ablation study, all results are
averaged over 20 train-test splits and between 1--5 replicates.

\begin{figure}

\centering{

\pandocbounded{\includegraphics[keepaspectratio]{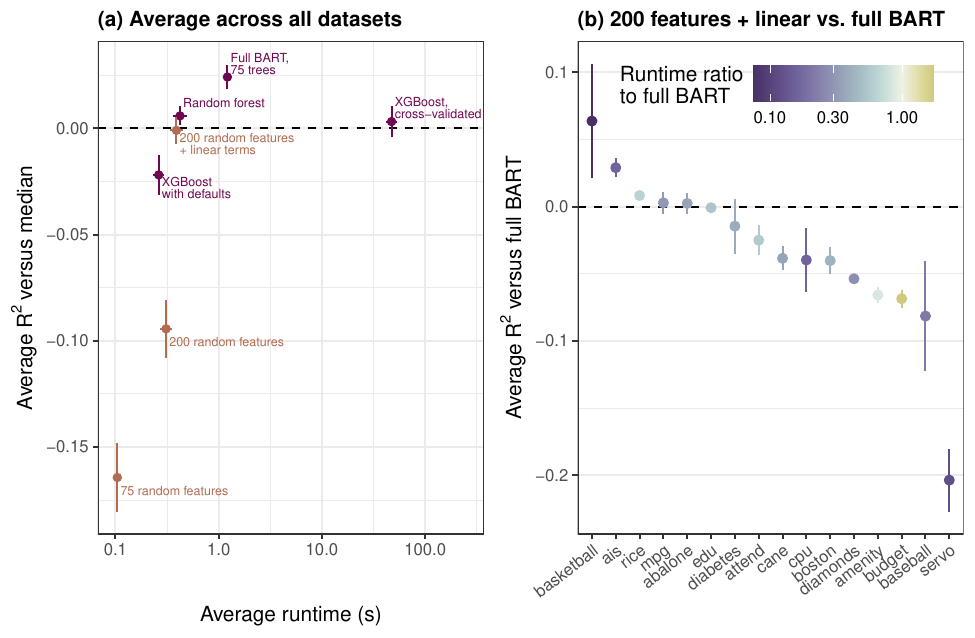}}

}

\caption{\label{fig-perf}\textbf{Predictive performance of random BART
features}. \textbf{(a)} Out-of-sample \(R^2\) values and average runtime
for different methods, averaged across 16 different datasets. \(R^2\)
values are normalized by subtracting the median \(R^2\) for all methods
in a given dataset. \textbf{(b)} Out-of-sample \(R^2\) values for 200
random BART features and additional linear features for continuous
covariates, versus the \(R^2\) value for the full BART fit. Points are
colored by the ratio of the runtime of the random features fit to the
full BART fit.}

\end{figure}%

Figure~\ref{fig-perf}(a) shows the results of this comparison.
Regression on 75 random tree features is by far the fastest method,
though it also suffers a notable loss in performance compared to the
median \(R^2\) across methods, indicated by the horizontal line.
However, increasing the number of trees to 200 improves performance, and
augmenting the trees with a linear covariate specification increases
performance further, to where it exceeds gradient-boosted trees and is
statistically indistinguishable from random forests. The performance of
gradient boosting can be improved through cross validation, but only at
significant computational cost. Full BART is the best-performing method
overall, but it is also 3--4 times slower than the best-performing
random tree feature setup. Overall, random tree features are at or near
the performance-speed frontier, and their place on that frontier can be
adjusted by varying the number of random features.

There is significant variability across datasets in the relative
performance of the methods, as Figure~\ref{fig-perf}(b) shows, comparing
the best-performing random tree feature model to full BART. On some
datasets, like the low-\(R^2\) \texttt{basketball} data, random features
outperform BART at nearly ten times the speed. On others, like the
\texttt{servo} data, random features are moderately faster than full
BART but perform around 0.2 worse in \(R^2\).

Each of the comparison methods is to some extent sensitive to choices of
hyperparameters, and we expect, especially on particular datasets, that
the relative performance of the methods could vary with extensive
hyperparameter tuning. Thus, we see these results as illustrative of
broad patterns in the common case where practitioners use methods
off-the-shelf, and not as a definitive comparison of the methods'
performance when each is tuned optimally.

Given the comparable performance of random features in
Figure~\ref{fig-perf} to leading tree-based methods, which are often the
best-performing machine learning methods for tabular data, it is
reasonable to expect that random tree features could perform well in
models more complicated than regression. For example, random tree
features could be combined with fixed effects, or used to flexibly fit
hazard functions that vary with covariates, or interacted with a main
variable of interest in a varying-coefficient model. It may be more
difficult or impossible for practitioners to adapt the other tree-based
methods to these settings, especially if dedicated software is not
available.

\subsection{Uncertainty
quantification}\label{uncertainty-quantification}

Finally, we investigate how well Bayesian linear regression on random
tree features quantifies uncertainty compared to the full BART model.
While Theorem~\ref{thm-finite-tree-conv} establishes learning rates for
the posterior mean \(\hat g_{T,n}\) when \(T_n\) and the coefficient
prior are chosen appropriately, it does not yield any guarantees on
uncertainty quantification. However, the convergence result in
Theorem~\ref{thm-gp} suggests that the posterior with infinitely many
trees will behave like a Gaussian process. We empirically show in the
following subsection that even with a finite number of trees, the
uncertainty quantification will be similar.

We generate \(n=400\) points from a correlated design bounded to
\([0, 0.75]^2\), and draw a true \(g_0\) from a Gaussian process with
the BART kernel \(k_\b\). Adding noise to these points yields an outcome
\(Y\), which we plot in Figure~\ref{fig-uncert}(a) along with \(g_0\)
itself.

To compare uncertainty quantification, we first fit a Gaussian process
model with the BART kernel \(k_\b\) to the data. Then we fit the full
BART model through \texttt{dbarts}, which automatically rescales the
observed points, and four Bayesian regression models on random tree
features, two models with 75 trees and two with 250 trees. We also vary
whether the tree split points are drawn from \([0,1]\) or just from the
observed range \([0,0.75]\). We expect the former to behave similarly to
the GP model, to which it converges as \(T\to\infty\), while the latter
may look more like the \texttt{dbarts} fit, which only makes splits in
the observed range. After fitting these six models, we measure the
posterior standard deviation of the underlying regression function
\(g_0\) across a grid of points in \([0,1]^2\) and plot the results in
panels (b1)--(b6) of Figure~\ref{fig-uncert}.

\begin{figure}

\centering{

\pandocbounded{\includegraphics[keepaspectratio]{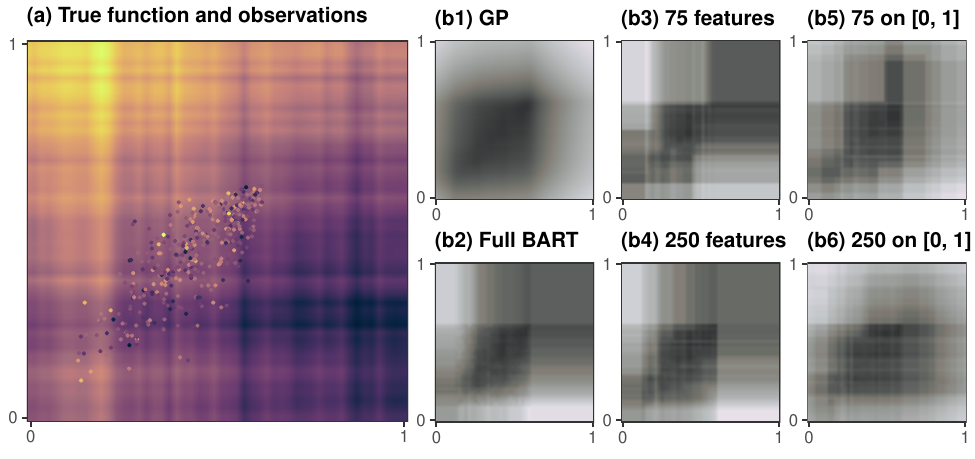}}

}

\caption{\label{fig-uncert}\textbf{Comparison of uncertainty
quantification}. \textbf{(a)} A function on \([0,1]^2\) drawn from
\(\GP(0, k_\b)\), along with 400 points drawn from a design bounded to
\([0, 0.75]^2\). \textbf{(b)} Posterior standard deviation for the
underlying regression function from the full GP model, a BART model with
75 trees and default parameters, and Bayesian linear regressions fit to
random BART features with 75 and 250 trees. Brighter colors indicate
higher posterior variance. The rightmost panels (b5) and (b6) use splits
for each variable drawn uniformly in \([0,1]\), while the middle panels
use splits drawn from the observed data range.}

\end{figure}%

The results are encouraging. Regression on both 75 and 250 random tree
features drawn on \([0,0.75]^2\) yields uncertainty estimates that look
similar to that of the full BART model, with particularly strong
agreement with 250 features. Notably, uncertainty in \(g_0\) does not
increase for any of these models outside the support of the covariates.
In contrast, the GP uncertainty increases with distance from the sampled
points, as does the uncertainty for regression on random tree features
drawn from \([0,1]^2\). The appendix contains additional results
comparing average credible interval length and coverage across a range
of sample sizes and residual variance; the patterns are similar to those
in Figure~\ref{fig-uncert}, though coverage and interval length are
smaller on average for trees using splits outside the observed range.

Thus, despite regression on random tree features not learning the tree
structure as full BART does, it appears that it can still yield
comparable uncertainty quantification, even with a finite number of
trees. Indeed, the similarity to full BART, while exhibiting differences
from the GP model, suggests that some of the advantages of a finite
number of trees \citep{chipman2010bart, jeong2023art} may be retained
when using random tree features.

\section{Discussion}\label{sec-concl}

By considering the \(T\to\infty\) limit of the BART model, we have shown
that BART models converge to a Gaussian process in this limit, and that
growing \(T\) with \(n\) can yield favorable learning rates for
regression, even when no learning of the tree structure occurs. Notably,
these rates are sufficiently fast to use within modern double-debiased
machine learning methods \citep{chernozhukov2018double}. The random tree
features introduced for our study of the learning rates also have
practical utility as a flexible and efficient tool for function
modeling, and appear to capture uncertainty about as well as full BART.

We conclude with several directions for future work, both theoretical
and empirical. On the theoretical side, further generalizing and
simplifying the arguments establishing learning rates for random
features, and extending them to general nuisance function learning with
different loss functions, are interesting avenues of future research. As
discussed in the introduction, the results here draw connections between
random features, past research on learning rates for BART, and
regression in spaces like \(S_1([0, 1]^p)\). We suspect there are
further interesting results to be found in the intersection of these
areas, and beyond the BART model specifically. Within BART, it would be
useful to establish formal results on uncertainty quantification, as
well as a posterior contraction formulation of
Theorem~\ref{thm-finite-tree-conv}.

On the empirical side, we see three interesting directions for future
work. First, many flavors of BART have been proposed, including ``soft''
varieties that use probabilistic splits and can handle smoother
functions. Researchers could empirically study the corresponding GP
kernel for these models by estimating \(\Pr[\bx\adj \bx']\), and could
use random draws from these models' priors as random features. Adapting
the type of random features to the data at hand in this way may yield
improved performance at a smaller number of trees. Second, with a small
number of random tree features, there is significant variability due to
the randomness in the tree structures. It would be of interest to
investigate variance reduction techniques, such as quasi-Monte Carlo
sampling, to reduce this variability. Such techniques have been
fruitfully applied to other types of random features
\citep{liu2021random}. Finally, when not all covariates matter equally,
BART models have been meaningfully improved by adding a hierarchical
variable selection prior on top of the prior on \(V\), the splitting
variable \citep{linero2018var}. It would be of great interest to
understand if it is possible to approximate these priors through a
different penalty on the random tree feature coefficients, and other
ways to perform variable selection while using random features.

\section*{References}\label{references}
\addcontentsline{toc}{section}{References}

\renewcommand{\bibsection}{}
\bibliography{refs.bib}

\newpage{}

\appendix

\renewcommand\thefigure{\thesection.\arabic{figure}}

\setcounter{figure}{0}

\section{Gaussian process proofs}\label{sec-app-proofs-gp}

\subsection{Covariance function
proofs}\label{covariance-function-proofs}

\subsubsection{\texorpdfstring{Proof of
Proposition~\ref{prp-tree-cov}}{Proof of Proposition~}}\label{proof-of-prp-tree-cov}

\begin{proof}
First,
\(\E[\tf(\bx')] = \E[\tf(\bx)] = \E[\E[\tf(\bx) \mid D, \vb V, \vb S]] = 0\).
Thus \[
\begin{aligned}
\Cov[\tf(\bx), \tf(\bx')]
&= \E[\tf(\bx)\tf(\bx')] \\
&= \E\left[\left(\sum_{\vb l\in\L} \mu_{\vb l} \psi_{\vb l}(\bx)\right)\left(\sum_{\vb l'\in\L}\mu_{\vb l'}\psi_{\vb l'}(\bx')\right)\right] \\
&= \E\left[\sum_{\vb l\in\L}\sum_{\vb l'\in\L} \psi_{\vb l}(\bx)\psi_{\vb l'}(\bx')
    \E\left[\mu_{\vb l}\mu_{\vb l'} \middle| \psi_{\vb l}(\bx), \psi_{\vb l'}(\bx'), D\right] \right] \\
&= \E\left[\sum_{\vb l\in\L}\sum_{\vb l'\in\L} \psi_{\vb l}(\bx)\psi_{\vb l'}(\bx')
    \sigma^2_\mu \ind\{\vb l=\vb l'\}\right] \\
&= \sigma^2_\mu \E\left[\sum_{\vb l\in\L}\psi_{\vb l}(\bx)\psi_{\vb l}(\bx')\right] \\
\end{aligned}
\] For any \(\vb l\), \(\psi_{\vb l}(\bx)\psi_{\vb l}(\bx')=1\) if and
only if \(\bx\) and \(\bx'\) are both in leaf node \(\vb l\). Thus
\(\E[\sum_{\vb l\in\L}\psi_{\vb l}(\bx)\psi_{\vb l}(\bx')]=\Pr[\bx\adj\bx']\).
\end{proof}

\subsubsection{\texorpdfstring{Proof of
Proposition~\ref{prp-tree-cov-exact}}{Proof of Proposition~}}\label{proof-of-prp-tree-cov-exact}

\begin{proof}
From Proposition~\ref{prp-tree-cov}, we have \[
\begin{aligned}
\Pr[\bx\adj \bx']
&= \E\left[\sum_{\vb l\in\L}\psi_{\vb l}(\bx)\psi_{\vb l}(\bx')\right] \\
&= \E\left[\sum_{\vb l\in\L}\prod_{k=1}^D
    \ind\{l_kx_{v_k} <_{l_k} l_ks_k\} \ind\{l_kx'_{v_k} <_{l_k} l_ks_k\} \right] \\
&= \E\left[\prod_{k=1}^D \sum_{l\in\{-1, 1\}} \E\left[
    \ind\{lx_{V_k} <_l lS_k\} \ind\{lx'_{V_k} <_l lS_k\}\middle| D\right] \right]. \\
\end{aligned}
\] The final step follows by the distributive property and the fact that
each summand is independent of the others given \(D\), and so the
expectation may be brought inside the product. Let \(U\sim\Unif[0, 1]\),
so \(F_S(S_k)\sim U\). Then we can compute the inner expectation,
conditioning additionally on \(V_k\), as \[
\begin{aligned}
\E&\left[\ind\{lx_{V_k} <_l l S_k\} \ind\{lx'_{V_k} <_l l S_k\} \middle| V_k, D\right] \\
&= \Pr\left[x_{V_k} < S_k, x'_{V_k} < S_k \middle| V_k, D\right]\ind\{l=1\}
+ \Pr\left[x_{V_k} \ge S_k, x'_{V_k} \ge S_k \middle| V_k, D\right]\ind\{l=-1\} \\
&= \Pr[F_S(x_{V_k}) < U, F_S(x'_{V_k}) < U]\ind\{l=1\} \\
&\quad+ \Pr[F_S(x_{V_k}) \ge U, F_S(x'_{V_k}) \ge U]\ind\{l=-1\} \\
&= (1 - F_S(x_{V_k} \vmax x'_{V_k})){\ind\{l=1\}}
    + F_S(x_{V_k} \vmin x'_{V_k}){\ind\{l=-1\}},
\end{aligned}
\] where \(\vmax\) indicates the maximum and \(\vmin\) the minimum. Then
summing over \(l\in\{-1, 1\}\) yields \[
\begin{aligned}
\sum_{l\in\{-1, 1\}} &\E\left[\ind\{lx_{V_k} <_l lS_k\} \ind\{lx'_{V_k} <_l lS_k\}\middle| D, V_k\right] \\
&= (1 - F_S(x_{V_k} \vmax x'_{V_k}))
    + F_S(x_{V_k} \vmin x'_{V_k}) \\
&= 1 - |F_S(x_{V_k}) - F_S(x'_{V_k})|.
\end{aligned}
\] Notice that \[
\begin{aligned}
\E\left[1 - |F_S(x_{V_k}) - F_S(x'_{V_k})| \middle| D\right]
&= 1 - \sum_{v=1}^p |F_S(x_{v}) - F_S(x'_{v})|f_V(v) \\
&= 1 - d_\b(\bx, \bx'),
\end{aligned}
\] so all together we have \[
\sum_{l\in\{-1, 1\}} \E\left[\ind\{lx_{V_k} <_l l S_k\} \ind\{lx'_{V_k} <_l l S_k\} \middle| D\right]
= 1 - d_\b(\bx, \bx').
\] Substituting the above result, which no longer depends on \(D\), we
find \[
\begin{aligned}
\Pr[\bx\adj \bx']
&= \E\left[\prod_{k=1}^D (1 - d_\b(\bx, \bx')) \right] \\
&= \E\left[(1 - d_\b(\bx, \bx'))^D \right] \\
&= \sum_{k=1}^\infty \left(1 - d_\b(\bx, \bx')\right)^k f_D(k).
\end{aligned}
\]
\end{proof}

\subsubsection{\texorpdfstring{Proof of
Corollary~\ref{cor-cov-pois}}{Proof of Corollary~}}\label{proof-of-cor-cov-pois}

\begin{proof}
In this case \[
f_D(k) = \frac{e^{-r}}{1-e^{-r}}\cdot\frac{r^k}{k!},
\] so \[
\begin{aligned}
k_\b(\bx, \bx')
&= \sigma^2_\mu \frac{e^{-r}}{1-e^{-r}} \sum_{k=1}^\infty
    \frac{1}{k!} \left(r - r d_\b(\bx, \bx')\right)^k \\
&= \sigma^2_\mu \frac{e^{-r}}{1-e^{-r}}
     \left(\exp(r - r d_\b(\bx, \bx'))-1\right) \\
&= \frac{\sigma^2_\mu}{1-e^{-r}}
    \left(\exp(-r d_\b(\bx, \bx')) - e^{-r}\right).
\end{aligned}
\]
\end{proof}

\subsubsection{\texorpdfstring{Proof of Proposition~\ref{prp-rkhs} and
Corollary~\ref{cor-eigen}}{Proof of Proposition~ and Corollary~}}\label{proof-of-prp-rkhs-and-cor-eigen}

We begin with a lemma that allows us to work with the simpler version of
\(\Sigma_\b\) in which both \(S\) and \(X\) are uniform.

\begin{lemma}[]\protect\hypertarget{lem-coord-meas}{}\label{lem-coord-meas}

Let \(F(\bx)=(F_S(x_1),\ldots,F_S(x_p))\), and suppose the standing
assumptions on \(f_S\) hold. Define the transformed BART kernel \[
k_u(\bu,\bu')
:=k_\b(F^{-1}(\bu),F^{-1}(\bu')),
\] so that \(k_\b(\bx,\bx')=k_u(F(\bx),F(\bx'))\). Then:

\begin{enumerate}
\def\labelenumi{\arabic{enumi}.}
\tightlist
\item
  The RKHS \(\H_\b\) of \(k_\b\) is the pullback by \(F\) of the RKHS
  \(\H_u\) of \(k_u\): \[
   \H_\b=\{g\circ F:g\in\H_u\},
   \qquad
   \norm{g\circ F}_{\H_\b}=\norm{g}_{\H_u}.
   \] In particular, \(\H_u\) is norm-equivalent to \(S_1([0,1]^p)\) if
  and only if \(\H_\b\) is.
\item
  Suppose \(\rho\) is bounded above and bounded away from zero. Define
  the transformed-kernel operator under the uniform density \[
   (\Sigma_u h)(\bu) :=\int_{[0,1]^p}k_u(\bu,\bu')h(\bu')\,d\bu',
   \] on \(L^2([0,1]^p)\). If \(\varsigma_j(A)\) denotes the ordered
  nonzero eigenvalues of an operator \(A\), then there exist constants
  \(0<m\le M<\infty\) such that, for every \(j\), \[
   m\varsigma_j(\Sigma_u) \le \varsigma_j(\Sigma_\b) \le M\varsigma_j(\Sigma_u).
   \]
\end{enumerate}

\end{lemma}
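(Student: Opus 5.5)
The proof rests on the fact that $F$ is a coordinatewise bijection of $[0,1]^p$ onto itself. Since $f_S$ is continuous with strictly positive density on $[0,1]$, $F_S$ is a strictly increasing homeomorphism of $[0,1]$. So $F$ is a homeomorphism of the cube, $F^{-1}$ is well defined, and $k_\b(\bx,\bx')=k_u(F(\bx),F(\bx'))$ holds by definition. In fact $k_u(\bu,\bu')=\sigma_\mu^2\sum_k(1-\sum_v f_V(v)|u_v-u_v'|)^k f_D(k)$ by Proposition~\ref{prp-tree-cov-exact}, although we will not need this form.

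For part 1, I would invoke the standard pullback theorem for RKHSs: if $k(\bx,\bx')=\tilde k(F(\bx),F(\bx'))$ with $F$ a bijection, then $\H_k=\{g\circ F: g\in\H_{\tilde k}\}$ with $\norm{g\circ F}=\norm{g}$. To verify it directly, define $U:\H_u\to\R^{[0,1]^p}$ by $Ug=g\circ F$. The map $U$ sends $k_u(\cdot,F(\bx'))$ to $k_\b(\cdot,\bx')$ and preserves inner products on the spans of kernel sections, since $\langle k_u(\cdot,F\bx),k_u(\cdot,F\bx')\rangle=k_\b(\bx,\bx')$. These spans are dense, and bijectivity of $F$ means every section $k_u(\cdot,\bu')$ has the form $k_u(\cdot,F(\bx'))$. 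Hence $U$ extends to an isometry onto $\H_\b$, and pointwise evaluation is preserved because convergence in an RKHS implies pointwise convergence.

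The "in particular" clause needs $S_1([0,1]^p)$ to be invariant under composition with $F$ and $F^{-1}$, with bounded norms in both directions. This is where I would use the Lipschitz assumption on $F_S$ (as assumed in Proposition~\ref{prp-rkhs}) together with $f_S$ bounded below. Under these conditions $F_S$ is bi-Lipschitz, hence absolutely continuous with derivative bounded above and below, and so is $F_S^{-1}$. For $h\in S_1$ I would compute the mixed weak derivative by the chain rule coordinatewise: $D^{\vb a}(h\circ F)=(D^{\vb a}h)\circ F\cdot\prod_{v:a_v=1}f_S(x_v)$. Its $L^2$ norm is then controlled by a change of variables, using the Jacobian bounds. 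Justifying the chain rule for weak derivatives under bi-Lipschitz coordinatewise maps is the technical step I expect to be the main obstacle; I would handle it via density of smooth functions in $S_1$ and the tensor-product structure.

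For part 2, the change of variables $\bu'=F(\bx')$ gives $\Sigma_\b f(\bx)=\int k_u(F\bx,\bu')f(F^{-1}\bu')\tilde\rho(\bu')d\bu'$ with $\tilde\rho(\bu')=\rho(F^{-1}\bu')/\prod_v f_S(F_S^{-1}(u'_v))$. The composition operator $f\mapsto f\circ F^{-1}$ is unitary from $L^2(\rho\,d\bx)$ to $L^2(\tilde\rho\,d\bu)$, so $\Sigma_\b$ is unitarily equivalent to $\Sigma_{u,\tilde\rho}h=\int k_u(\cdot,\bu')h(\bu')\tilde\rho(\bu')d\bu'$ and has the same eigenvalues. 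By the assumptions on $\rho$ and $f_S$, $\tilde\rho$ is bounded between positive constants $m$ and $M$. The nonzero eigenvalues of $\Sigma_{u,\tilde\rho}$ coincide with those of the self-adjoint operator $\tilde\rho^{1/2}\Sigma_u\tilde\rho^{1/2}$ on $L^2(d\bu)$. The Courant--Fischer min-max principle, applied with the substitution $g=\tilde\rho^{1/2}h$, then gives $m\varsigma_j(\Sigma_u)\le\varsigma_j\le M\varsigma_j(\Sigma_u)$: the Rayleigh quotient $\langle\Sigma_u g,g\rangle/\norm{g}^2$ is compared with $\langle\Sigma_u g,g\rangle/\norm{\tilde\rho^{-1/2}g}^2$, and $\norm{\tilde\rho^{-1/2}g}^2$ lies between $M^{-1}\norm{g}^2$ and $m^{-1}\norm{g}^2$. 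Alternatively, one can argue via the singular values of the compact operator $\Sigma_u^{1/2}$ composed with the multiplication operator.
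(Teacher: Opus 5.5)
Your proposal is correct and matches the paper's proof step for step. Part 1 uses the RKHS pullback isometry, then the coordinatewise weak chain rule with a change of variables. Part 2 uses the unitary identification of $\Sigma_\b$ with $M_{\sqrt q}\Sigma_u M_{\sqrt q}$ followed by min--max; your Rayleigh-quotient substitution $g=\tilde\rho^{1/2}h$ is equivalent to the paper's passage to $\Sigma_u^{1/2}M_q\Sigma_u^{1/2}$ with a Loewner comparison. One small remark: you do not need to borrow the Lipschitz hypothesis from Proposition~\ref{prp-rkhs}, since continuity and strict positivity of $f_S$ on the compact interval $[0,1]$ already give $0<m_S\le f_S\le M_S<\infty$, and that is exactly what the paper uses.
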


\begin{proof}
Continuity and strict positivity of \(f_S\) on \([0,1]\) imply that
\(0<m_S\le f_S\le M_S<\infty\) for some constants \(m_S,M_S\). Thus
\(F\) is a coordinatewise bijection with bounded derivative and bounded
inverse derivative. The standard pullback characterization of RKHSs
gives \[
\H_\b=\{g\circ F:g\in\H_u\},
\qquad \norm{g\circ F}_{\H_\b}=\norm{g}_{\H_u}.
\] For \(\vb a\in\{0,1\}^p\), the weak chain rule expresses
\(D^{\vb a}(g\circ F)\) as \((D^{\vb a}g)\circ F\) times a product of
the corresponding \(f_S(x_v)\). The bounds above and a change of
variables therefore show that composition by \(F\) is bounded on
\(S_1([0,1]^p)\). The same argument for \(F^{-1}\) proves bounded
invertibility and hence the first claim.

For the second claim, the pushforward of \(\rho(\bx)d\bx\) under \(F\)
has density \[
q(\bu)=\frac{\rho(F^{-1}(\bu))}{\prod_{v=1}^p f_S(F_S^{-1}(u_v))}
\] with respect to \(d\bu\), and \(0<m\le q\le M<\infty\). Define \[
(\Sigma_{u,q}h)(\bu)
:=\int_{[0,1]^p}k_u(\bu,\bu')h(\bu')q(\bu')\,d\bu',
\] as an operator on \(L^2(q(\bu)d\bu)\). The unitary map
\(h\mapsto h\circ F\) identifies \(\Sigma_\b\) with \(\Sigma_{u,q}\).
Multiplication by \(\sqrt q\) further identifies \(\Sigma_{u,q}\) with
\(M_{\sqrt q}\Sigma_u M_{\sqrt q}\) on \(L^2([0,1]^p)\), where \(M_h\)
denotes multiplication by \(h\). Thus \(\Sigma_\b\), \(\Sigma_{u,q}\),
and \(M_{\sqrt q}\Sigma_u M_{\sqrt q}\) have the same eigenvalues. These
in turn equal those of \(\Sigma_u ^{1/2}M_q\Sigma_u^{1/2}\), since the
latter two operators are \(AA^*\) and \(A^*A\) for
\(A=M_{\sqrt q}\Sigma_u ^{1/2}\). Then, since
\(mI\preceq M_q\preceq MI\) by boundedness of \(q\), \[
m\Sigma_u \preceq \Sigma_u^{1/2}M_q\Sigma_u^{1/2} \preceq M\Sigma_u,
\] and the result follows from the min--max principle for eigenvalues.
\end{proof}

We can now prove Proposition~\ref{prp-rkhs} and
Corollary~\ref{cor-eigen}.

\begin{proof}[Proof of Proposition~\ref{prp-rkhs}]
By Lemma~\ref{lem-coord-meas}, it suffices to work in the transformed
coordinates \(u_v=F_S(x_v)\). We first consider the kernel conditional
on \(D=d\), \[
k_d(\bu,\bu')=\left(1-\sum_{v=1}^p f_V(v)\left|u_v-u_v'\right|\right)^d.
\] We will sandwich this kernel in the Loewner order between two kernels
which are easier to analyze. To compare the fixed-depth kernels, let
\(\ell_m(u,u')=(1-|u-u'|)^m\) for \(m\ge1\), and let \(\ell_0=1\). This
lets us expand \(k_d\) as \[
k_d(\bu,\bu')
=\sum_{\substack{\vb n\in\{0,1,\ldots\}^p\\\sum_v n_v=d}}
\binom{d}{n_1,\ldots,n_p}
\prod_{v=1}^p f_V(v)^{n_v}\ell_{n_v}(u_v,u_v'),
\] where \(\vb n=(n_1,\ldots,n_p)\) are the multinomial counts. Note
that \(\ell_1(u,u')=1-|u-u'|\) is itself a nonnegative definite kernel:
it is the probability that \(u\) and \(u'\) are not separated by a
single uniform split. Since \(\ell_m= (\ell_1)^m\), the Schur product
theorem shows that \(\ell_m\) is also a nonnegative definite kernel for
every integer \(m\ge1\); the same is plainly true of the constant kernel
\(\ell_0\).

We next compare these kernels quantitatively. Extend \(\ell_m\) to the
stationary kernel \(\bar\ell_m(h)=(1-|h|)_+^m\) on \(\R\). The
triangular kernel \(\bar\ell_1\) is nonnegative definite, since it is
the convolution of two indicator functions, and hence so is each power
\(\bar\ell_m\). For \(m\ge2\) and \(\omega\ne0\), the Fourier transform
can be integrated by parts twice: \[
\begin{aligned}
\widehat{\bar\ell}_m(\omega)
&=2\int_0^1(1-t)^m\cos(\omega t)\,dt \\
&=\frac{2m}{\omega}\int_0^1(1-t)^{m-1}\sin(\omega t)\,dt \\
&=\frac{2m}{\omega^2}\left\{1-(m-1)
\int_0^1(1-t)^{m-2}\cos(\omega t)\,dt\right\}.
\end{aligned}
\] For \(m\ge2\), the absolute value of the integral is at most
\(\int_0^1(1-t)^{m-2}dt=(m-1)^{-1}\), so the quantity in braces is at
most two. For \(m=1\), direct integration gives
\(\widehat{\bar\ell}_1(\omega)=2(1-\cos\omega)/\omega^2\le4/\omega^2\).
Thus \(\widehat{\bar\ell}_m(\omega)\le4m/\omega^2\) for every \(m\ge1\)
when \(|\omega|\ge1\). For \(|\omega|\le1\), the Fourier-integral
representation instead gives
\(\widehat{\bar\ell}_m(\omega)\le2\int_0^1(1-t)^m dt=2/(m+1)\).
Combining the two bounds gives
\(\widehat{\bar\ell}_m(\omega)\le Cm(1+\omega^2)^{-1}\) for an absolute
constant \(C\). The Fourier transform of the Laplacian kernel
\(\bar L(h)=\exp(-|h|)\) is \(2(1+\omega^2)^{-1}\). The spectral
characterization of stationary kernels therefore gives \[
\bar\ell_m\preceq Cm\bar L
\] in the Loewner order on \(\R\), and restricting both kernels to
\([0,1]\) preserves this ordering.

For each fixed \(m\ge2\), we can also bound \(\bar\ell_m\) from below.
Indeed, the quantity in braces above is \(1-\E[\cos(\omega T)]\), where
\(T\) has density \((m-1)(1-t)^{m-2}\) on \([0,1]\). It is strictly
positive for \(\omega\ne0\), tends to one as \(|\omega|\to\infty\), and
its second-order expansion at zero is positive. These facts imply that,
for some \(b_m>0\), \(1-\E[\cos(\omega T)]\ge b_m\min\{\omega^2,1\}\)
for all \(\omega\). Substituting this into the preceding display gives
\(\widehat{\bar\ell}_m(\omega)\ge2mb_m\) for \(0<|\omega|\le1\) and
\(\widehat{\bar\ell}_m(\omega)\ge2mb_m\omega^{-2}\) for \(|\omega|>1\);
the first bound also holds at zero by continuity. These two bounds give
\(\widehat{\bar\ell}_m(\omega)\ge a_m(1+\omega^2)^{-1}\) for some
\(a_m>0\). Comparing this spectral density to that of \(\bar L\) gives
\[
a_m\bar L \preceq \bar\ell_m
\] for each \(m\ge2\), and we can likewise restrict this to \([0,1]\).
For \(m=1\), write \[
\ell_1(u,u')=\min\{u,u'\}+\min\{1-u,1-u'\}.
\] The two summands are the covariance kernels of Brownian motion
started at the left and right endpoints, respectively. Their RKHSs are
the subspaces of \(W_1([0,1])\) of functions vanishing at \(0\) and at
\(1\). The RKHS of their sum is the sum of these two spaces, which is
all of \(W_1([0,1])\): for \(g\in W_1([0,1])\), the functions \(ug(u)\)
and \((1-u)g(u)\) belong to the two respective subspaces and sum to
\(g\). This decomposition is bounded in the Sobolev norm, so the
resulting RKHS norm is equivalent to the \(W_1\) norm. Thus we have
extended the preceding lower bound to \(m=1\) as well, and have
established
\begin{equation}\protect\phantomsection\label{eq-ell-loewner}{
a_m L \preceq \ell_m \preceq CmL
}\end{equation} in the Loewner order, where \(L\) is the restriction of
\(\bar L\) to \([0,1]\), where \(a_m>0\) may depend on \(m\), while
\(C\) does not. This ordering equivalently maps to an inclusion ordering
of the kernel's corresponding RKHSs.

Finally, the RKHS of \(\ell_0\) is the one-dimensional space of constant
functions. Its inclusion in the RKHS of \(L\) is bounded, so the
standard RKHS inclusion criterion gives \(\ell_0\preceq C_0L\) for some
\(C_0<\infty\). Increasing \(C\) if necessary, we therefore have
\(\ell_m\preceq C(1+m)L\) for every \(m\ge0\).

Having sandwiched \(\ell_m\), we can sandwich \(k_d\) as well. Since
kernel ordering is preserved under tensor products, for every count
vector \(\vb n\) appearing in the multinomial expansion, \[
\prod_{v=1}^p\ell_{n_v}
\preceq C^p\prod_{v=1}^p(1+n_v)\,K_\otimes,
\qquad\text{where}\quad
K_\otimes:=\prod_{v=1}^p L.
\] The multinomial coefficients and the factors \(\prod_v f_V(v)^{n_v}\)
are nonnegative and sum to one over \(\vb n\). Moreover,
\(\prod_v(1+n_v)\le(1+d)^p\) whenever \(\sum_vn_v=d\). Summing the
preceding comparison over the multinomial expansion therefore yields \[
k_d\preceq C^p(1+d)^pK_\otimes.
\]

If \(d\ge p\), the multinomial sum also contains a term with every
\(n_v\ge1\), since \(f_V\) has support on all of \([p]\). Fix one such
count vector \(\vb n^*\). By the lower bound in
Eq.~\ref{eq-ell-loewner}, \[
\prod_{v=1}^p\ell_{n_v^*}\succeq
\left(\prod_{v=1}^pa_{n_v^*}\right)K_\otimes.
\] All other terms in the expansion are nonnegative definite, so they
can only increase the kernel in the nonnegative-definite ordering. It
follows that \[
k_d\succeq c_dK_\otimes
\] for some \(c_d>0\) whenever \(d\ge p\).

We can now extend the bounds on \(k_d\) to bounds on \(k_\b\) itself.
Representing \(k_\b\) as a mixture over the \(k_d\) as \[
k_\b=\sigma_\mu^2\sum_{d\ge1}f_D(d)k_d,
\] we consider comparing \(k_\b\) to kernels built from portions of the
total sum. Since \(\Pr(D\ge p)>0\), there is some fixed \(d_0\ge p\) for
which \(f_D(d_0)>0\). Keeping only that nonnegative-definite term in the
mixture gives \[
k_\b\succeq \sigma_\mu^2f_D(d_0)k_{d_0}
\succeq \sigma_\mu^2f_D(d_0)c_{d_0}K_\otimes.
\] In the other direction, summing the fixed-depth upper bounds gives \[
k_\b\preceq
\sigma_\mu^2 C^p\E[(1+D)^p]K_\otimes
\preceq C'K_\otimes,
\] where \(C'<\infty\) because \(\E[D^p]<\infty\). Hence
\(cK_\otimes\preceq k_\b\preceq C'K_\otimes\) for some
\(0<c<C'<\infty\). The two-sided kernel ordering shows that the
corresponding RKHSs are norm-equivalent. Since the RKHS of \(L\) is
\(W_1([0,1])\), the RKHS of \(K_\otimes\) is the tensor product
\(\bigotimes_{v=1}^p W_1([0,1])=S_1([0,1]^p)\). This proves
Proposition~\ref{prp-rkhs}.
\end{proof}

\begin{proof}[Proof of Corollary~\ref{cor-eigen}]
By the second part of Lemma~\ref{lem-coord-meas}, the eigenvalues of
\(\Sigma_\b\) are equivalent up to constants to those of the transformed
kernel under the uniform density. By the kernel sandwich in
Proposition~\ref{prp-rkhs} and the min--max characterization of
eigenvalues, these are in turn equivalent up to constants to the
eigenvalues \(\{\rho_j\}\) of the uniform-density operator associated
with \(K_\otimes\).

Finally, the eigenvalues of the univariate operator associated with
\(L\) are of order \(j^{-2}\) \citep{ritter1995}. The eigenvalues of its
\(p\)-fold tensor product \(K_\otimes\) are therefore all products
\(\prod_{v=1}^p\rho^{(1)}_{j_v}\), where \(\rho^{(1)}_j\asymp j^{-2}\).
The number of these products exceeding \(\lambda\) is, up to constants,
the number of integer tuples satisfying
\(\prod_vj_v\lesssim\lambda^{-1/2}\), which is of order
\(\lambda^{-1/2}\log^{p-1}(1/\lambda)\). Inverting this counting
relation shows that the reordered eigenvalues satisfy
\(\rho_j\asymp j^{-2}\log^{2(p-1)}(j)\); see also
\citet{zhang2023regression}, Appendix B.
\end{proof}

\subsection{GP convergence proofs}\label{gp-convergence-proofs}

\subsubsection{\texorpdfstring{Proof of
Lemma~\ref{lem-clt-fdd}}{Proof of Lemma~}}\label{proof-of-lem-clt-fdd}

\begin{proof}
Let \(Z_{j} := (\tf_{j}(\bx_1), \dots, \tf_{j}(\bx_k))^\top\) be the
vector of outputs from a tree \(\tf_j\) drawn independently from the
BART prior, so \[
(\bf_T(\bx_1), \dots, \bf_T(\bx_k)) \sim T^{-1/2} \sum_{j=1}^T Z_j.
\] For every \(j\), we have \(\E[Z_j]=0\) by iterated expectations and
\(\Cov[Z_j] = K\) by Proposition~\ref{prp-tree-cov}. Then the result
follows from the multivariate central limit theorem.
\end{proof}

\subsubsection{\texorpdfstring{Proof of
Theorem~\ref{thm-gp}}{Proof of Theorem~}}\label{proof-of-thm-gp}

The proof of Theorem~\ref{thm-gp} requires several preliminary lemmas
and setup.

We say that a tree function prior has \emph{\(K\)-Lipschitz separation
probability} if the conclusion of Lemma~\ref{lem-leaf-prob} holds, i.e.,
there exists a \(1\le K<\infty\) such that for any
\(\bx,\by\in[0,1]^p\),
\(\Pr[\bx\not\adj\by\mid D] \le K D \norm{\bx-\by}_1\).

\begin{lemma}[]\protect\hypertarget{lem-leaf-prob}{}\label{lem-leaf-prob}

Let \(\bx,\by\in[0,1]^p\) and \(\tf\) be a tree function drawn from the
symmetric BART prior. If \(F_S\) is Lipschitz continuous with constant
\(1\le K_S<\infty\), then \[
\Pr[\bx\not\adj\by\mid D] \le K_S D \norm{\bx-\by}_1,
\]

\end{lemma}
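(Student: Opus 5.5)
The plan is a union bound over the \(D\) levels of the symmetric tree, followed by a per-level computation that reuses the single-split calculation from the proof of Proposition~\ref{prp-tree-cov-exact}.

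Condition on \(D\). The points \(\bx\) and \(\by\) fall in different leaves exactly when at least one of the \(D\) decision rules separates them, that is, when for some \(k\) the cutpoint \(S_k\) lies between \(x_{V_k}\) and \(y_{V_k}\). The union bound therefore gives
\[
\Pr[\bx\not\adj\by\mid D] \le \sum_{k=1}^D \Pr[\text{rule } k \text{ separates } \bx,\by \mid D].
\]
The rules \((V_k,S_k)\) are i.i.d.\ given \(D\), so every summand equals the separation probability of a single rule.

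For a single rule, condition additionally on \(V_k=v\). The proof of Proposition~\ref{prp-tree-cov-exact} already computed the probability that one split keeps the two points together as \(1-|F_S(x_v)-F_S(y_v)|\). The separation probability is therefore \(|F_S(x_v)-F_S(y_v)|\). The boundary conventions \(<\) versus \(\le\) do not matter, because \(S\) has a continuous distribution. Lipschitz continuity of \(F_S\) bounds this by \(K_S|x_v-y_v|\).

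Averaging over \(V_k\sim f_V\) gives a per-level separation probability of \(d_\b(\bx,\by)\le K_S\sum_v f_V(v)|x_v-y_v|\). Since \(f_V(v)\le 1\), this is at most \(K_S\norm{\bx-\by}_1\). Multiplying by the \(D\) levels yields the claim \(\Pr[\bx\not\adj\by\mid D]\le K_S D\norm{\bx-\by}_1\).

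No step here is a real obstacle. The only points needing care are that the event "different leaves" coincides exactly with "some rule separates the points," which holds because symmetric-tree leaves are intersections of the \(D\) half-space constraints, and that ties at the cutpoints have probability zero.
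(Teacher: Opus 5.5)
Your proof is correct and is essentially the paper's argument. The paper starts from the closed form \(\Pr[\bx\not\adj\by\mid D]=1-(1-d_\b(\bx,\by))^D\) and bounds it by its tangent line \(D\,d_\b(\bx,\by)\) using concavity, which is the Bernoulli-inequality counterpart of your union bound over the \(D\) levels (your version has the small advantage of not using independence across levels), and then finishes with the same bound \(d_\b(\bx,\by)\le K_S\norm{\bx-\by}_1\) from Lipschitz continuity and \(f_V(v)\le 1\).
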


\begin{proof}
By the proof of Proposition~\ref{prp-tree-cov}, \[
\Pr[\bx\not\adj\by] = 1 - \sum_{k=1}^\infty \left(1-d_\b(\bx, \by)\right)^k f_D(k)
= 1 - \E\left[\left(1-d_\b(\bx, \by)\right)^D\right],
\] so \(\Pr[\bx\not\adj\by\mid D]=1 - (1-d_\b(\bx, \by))^D\). Now, \[
\begin{aligned}
\frac{\partial}{\partial\delta}(1-\delta)^d &= -d (1-\delta)^{d-1}
\qand \\
\frac{\partial^2}{\partial\delta^2}(1-\delta)^d &= d(d-1)(1-\delta)^{d-2}.
\end{aligned}
\] Thus \(\Pr[\bx\not\adj\by\mid D]\) is concave in \(d_\b(\bx, \by)\)
everywhere, and we can therefore upper bound it by the tangent line at
the origin, i.e., \[
\Pr[\bx\not\adj\by\mid D] \le \Pr[\bx\not\adj\bx\mid D] + D(1-0)^{D-1} d_\b(\bx, \by) = D d_\b(\bx, \by).
\] Then, \[
\begin{aligned}
d_\b(\bx, \by) &:= \sum_{v=1}^p f_V(v) |F_S(x_v) - F_S(y_v)| \\
&\le \sum_{v=1}^p |F_S(x_v) - F_S(y_v)|  \\
&\le \sum_{v=1}^p K_S|x_v - y_v|
= K_S\norm{\bx - \by}_1.
\end{aligned}
\] Thus \(\Pr[\bx\not\adj\by\mid D] \le K_S D \norm{\bx-\by}_1\), as
claimed.
\end{proof}

Let \(S \subseteq [p]\) represent a set of coordinates. Let \(\bx_S\)
represent a vector, where the coordinates in \(S\) are identical to
\(\bx\), and the rest are zero. Let \(\bx_{-S}\) be defined identically,
with the complement of \(S\). For a point \(\bx\) and an offset
\(\epsilon\in\R^p_{\ge 0}\), define the \emph{closed block}
\([\bx, \bx^{+\epsilon}]\) by \[
[\bx, \bx^{+\epsilon}] = \prod_{q=1}^p [x_q, x_q + \epsilon_q].
\] The behavior of tree functions on these closed blocks is key to
establishing tightness of the BART process through several critical
lemmas.

We call the \(q\) where \(\epsilon_q>0\) the \emph{active dimensions} of
the (closed) block, and we define the \emph{volume} of the block as
\(\nu([\bx, \bx^{+\epsilon}]) := \prod_{q:\epsilon_q>0} |\epsilon_q|\),
and the \emph{increment} of \(\tf\) around the block as \[
\Delta \tf([\bx, \bx^{+\epsilon}]) :=
\sum_{S \subseteq \{q: \epsilon_q>0\}} (-1)^{|S|} \tf(\bx_S + \bx_{-S}^{+\epsilon}).
\] For example, if \(p=3\), \(\bx=0\), and \(\epsilon=(1, 0, 1)\), then
the active dimensions are \(\{1, 3\}\), the volume is 1, and \[
\Delta \tf([\bx, \bx^{+\epsilon}])
= \tf(0, 0, 0) - \tf(1, 0, 0) - \tf(0, 0, 1) + \tf(1, 0, 1).
\]

The points
\(\V([\bx, \bx^{+\epsilon}]) = \{\bx_S + \bx_{-S}^{+\epsilon}: S \subseteq \{q: \epsilon_q>0\} \}\)
represent the active vertices of the closed block
\([\bx, \bx^{+\epsilon}]\). For symmetric trees, we can bound the
probability that all the active vertices are in different leaf nodes by
the volume of the block.

\begin{lemma}[]\protect\hypertarget{lem-vtx-bound}{}\label{lem-vtx-bound}

Let \(\A([\bx, \bx^{+\epsilon}])\) represent the event that all the
active vertices \(\V([\bx, \bx^{+\epsilon}])\) for a closed block
\([\bx, \bx^{+\epsilon}]\) are in different leaf nodes in a symmetric
tree \(\tf\), i.e., \[
\A([\bx, \bx^{+\epsilon}]) :=
\bigcap_{\vb a\neq\vb b \in\V([\bx, \bx^{+\epsilon}])} \vb a \not\adj \vb b.
\] If \(\tf\) has \emph{\(K\)-Lipschitz separation probability}, then \[
\Pr\left[\A([\bx, \bx^{+\epsilon}])\middle| D\right] \le
(K D)^p \ \nu([\bx, \bx^{+\epsilon}]).
\]

\end{lemma}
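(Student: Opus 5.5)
Let \(Q=\{q:\epsilon_q>0\}\) be the set of active dimensions, with \(m=|Q|\le p\). The plan is to reduce the event \(\A\) to a product of \(m\) one-dimensional separation events and then bound each factor with the \(K\)-Lipschitz separation property.

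\textbf{Step 1 (reduction to edges).} Fix a coordinate \(q\in Q\). Choose two active vertices \(\vb a,\vb b\in\V([\bx,\bx^{+\epsilon}])\) that differ only in coordinate \(q\); for instance \(\vb a=\bx\) with the inactive coordinates filled as in the definition, and \(\vb b\) obtained by moving coordinate \(q\) by \(\epsilon_q\). On \(\A\) these vertices lie in different leaves, so
\[
\A([\bx,\bx^{+\epsilon}])\subseteq\bigcap_{q\in Q} E_q, \qquad E_q:=\{\vb a^{(q)}\not\adj \vb b^{(q)}\}.
\]
In a symmetric tree the points \(\vb a^{(q)}\) and \(\vb b^{(q)}\) are separated exactly when some level \(k\) has \(V_k=q\) and \(S_k\) falling in the interval between \(x_q\) and \(x_q+\epsilon_q\). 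Hence \(E_q\) depends only on which levels use variable \(q\) and where their splits land. It is therefore cleaner to define \(E_q\) as the event \(\{\exists k: V_k=q,\ S_k\in[x_q,x_q+\epsilon_q)\}\), which does not depend on the other coordinates.

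\textbf{Step 2 (factorization given \(D\)).} Conditional on \(D\), the pairs \((V_k,S_k)\) are i.i.d. Each level can "hit" at most one coordinate \(q\), since it has a single variable. I would bound the joint probability by a union over injective assignments of distinct levels \(k_q\) to the coordinates \(q\in Q\): the event \(\bigcap_q E_q\) requires distinct levels \(k_q\) with \(V_{k_q}=q\) and \(S_{k_q}\) in the \(q\)-th interval. Independence across levels then gives
\[
\Pr\Bigl[\textstyle\bigcap_q E_q\Bigm| D\Bigr]\le \sum_{\text{distinct }(k_q)}\prod_{q\in Q}\Pr[V=q,\ S\in[x_q,x_q+\epsilon_q)] \le D^m\prod_{q\in Q}\Pr[\text{single level separates along }q].
\]
This is the negative-association factorization described in the proof sketch.

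\textbf{Step 3 (per-coordinate bound).} A single level separates along \(q\) with probability \(f_V(q)\,|F_S(x_q+\epsilon_q)-F_S(x_q)|\le K\epsilon_q\). To get this from the Lipschitz separation hypothesis, apply it with \(D=1\) to a one-level tree, which gives \(\Pr[\text{separated}\mid D=1]\le K\epsilon_q\). Since the separation probability for depth 1 is exactly the single-level probability, this is the same quantity. The product over \(q\in Q\) is then bounded by \(K^m D^m\nu\). Because \(K\ge1\) and \(D\ge1\), we have \((KD)^m\le(KD)^p\), which gives the claim.

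\textbf{Main obstacle.} The hardest point is Step 2: justifying that the joint separation probability factorizes with only a \(D^m\) loss. The hypothesis is stated for the whole tree, so extracting the per-level bound requires the depth-1 specialization used in Step 3. If that specialization is not legitimate, I would fall back on the explicit Lipschitz bound on \(F_S\), as in Lemma~\ref{lem-leaf-prob}.
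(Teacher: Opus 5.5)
Your proof is correct, but its central step differs from the paper's. Both arguments reduce $\A([\bx,\bx^{+\epsilon}])$ to the events $E_q$ that some level with $V_k=q$ splits inside the $q$-th side of the block. The paper proves equality by symmetry; your inclusion is enough. The paper then uses negative association of the $E_q$ given $D$ to get $\Pr[\bigcap_q E_q\mid D]\le\prod_q\Pr[E_q\mid D]$. It applies the $K$-Lipschitz separation hypothesis verbatim to each factor at the actual depth $D$, giving $KD\epsilon_q$.

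Your Step 2 is not negative association, despite your remark. It is a union bound over injective assignments of levels to active coordinates. This is elementary and self-contained, and it shows where the $D^m$ comes from: it is really a falling factorial, which even vanishes when $m>D$.

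The cost is that your bound is expressed through the per-level probabilities $p_q=f_V(q)\,|F_S(x_q+\epsilon_q)-F_S(x_q)|$. You therefore need $p_q\le K\epsilon_q$, which the hypothesis (a statement about the whole tree given $D$) does not give you directly. Reading it at $D=1$ is legitimate only if $f_D(1)>0$. Your fallback to a Lipschitz $F_S$ changes the hypothesis. That is harmless for the paper, since there $K=K_S$ comes from Lemma~\ref{lem-leaf-prob}, whose proof shows exactly $d_\b(\bx,\by)\le K_S\norm{\bx-\by}_1$.

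You can close this within the stated hypothesis. Take any $d$ in the support of $f_D$, and cut $[x_q,x_q+\epsilon_q]$ into $n$ equal pieces whose per-level probabilities $p_i$ sum to $p_q$. The hypothesis gives $1-(1-p_i)^d\le Kd\epsilon_q/n$. In the other direction, $1-(1-p_i)^d\ge dp_i(1-p_i)^{d-1}\ge dp_i(1-Kd\epsilon_q/n)$. Combining these, $p_i\le (K\epsilon_q/n)/(1-Kd\epsilon_q/n)$. Summing over the pieces gives $p_q\le K\epsilon_q/(1-Kd\epsilon_q/n)$, which tends to $K\epsilon_q$ as $n\to\infty$.
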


\begin{proof}
We can rewrite \(\Pr[\A([\bx, \bx^{+\epsilon}])\mid D]\) as \[
\begin{aligned}
\Pr\left[\A([\bx, \bx^{+\epsilon}])\middle| D\right]
&= \Pr \left[ \bigcap_{\vb a\neq \vb b \in \V([\bx, \bx^{+\epsilon}])}
     \vb a \not\adj \vb b \middle| D\right] \\
&= \Pr \left[ \bigcap_{q:\epsilon_q>0}
\bigcap_{S^a=S^b\cup\{q\}}
(\bx_{S^a}+\bx_{-S^a}^{+\epsilon}) \not\adj (\bx_{S^b}+\bx_{-S^b}^{+\epsilon})\middle| D\right],
\end{aligned}
\] where the inner intersection is over all
\(S^a,S^b\subseteq \{q\in [p]:\epsilon_q>0\}\) that differ only in their
inclusion of \(q\).

In other words, for each active dimension \(q\), we examine pairs of
points that differ only in coordinate \(q\). But in fact these events
are all the same due to the symmetry of the tree: if two points that
differ in coordinate \(q\) are in different leaf nodes, then there
exists some decision rule \(k\) with \(V_k=q\) and
\(x_q< S_k\le x_q+\epsilon_q\). This decision rule will also separate
any other pair of points that differ in coordinate \(q\). Thus \[
\Pr \left[ \bigcap_{q:\epsilon_q>0}
\bigcap_{S^a=S^b\cup\{q\}}
(\bx_{S^a}+\bx_{-S^a}^{+\epsilon}) \not\adj (\bx_{S^b}+\bx_{-S^b}^{+\epsilon})\middle| D\right]
= \Pr\left[\bigcap_{q:\epsilon_q>0} \bx\not\adj(\bx_{-\{q\}} + \bx^{+\epsilon}_{\{q\}})\middle| D \right].
\] This final probability is the event that a decision rule separates
\(\bx\) from \(\bx^{+\epsilon}\) in each coordinate. There are \(D\)
decision rules, drawn independently given \(D\). As a result, the set of
events considered on the right-hand side are negatively associated,
conditional on \(D\) \citep{dubhashi1996balls}. This means that \[
\Pr \left[ \bigcap_{q:\epsilon_q>0} \bx \not\adj (\bx_{-\{q\}} + \bx_{\{q\}}^{+\epsilon})\middle| D \right]
\le \prod_{q:\epsilon_q>0}\Pr\left[\bx \not\adj (\bx_{-\{q\}} + \bx_{\{q\}}^{+\epsilon})\middle| D\right].
\] Since
\(\norm{\bx - (\bx_{-\{q\}} + \bx_{\{q\}}^{+\epsilon})}_1=\epsilon_q\),
by assumption we conclude \[
\Pr\left[\A([\bx, \bx^{+\epsilon}])\middle| D\right]  \le
 \prod_{q:\epsilon_q>0}  KD|\epsilon_q|
\le (KD)^p \ \nu([\bx, \bx^{+\epsilon}]). \qedhere
\]
\end{proof}

Consider now a general tree \(\tf\) and block \(B\). The following
result on the moments of \(\tf(B)\) will be useful.

\begin{lemma}[]\protect\hypertarget{lem-inc-moments}{}\label{lem-inc-moments}

Let \(\tf\) be a tree drawn from the symmetric BART prior, and let
\(B=[\bx, \bx^{+\epsilon}]\) be a closed block. Then
\(\E[\Delta\tf(B)]=0\) and if \(\tf\) has \emph{\(K\)-Lipschitz
separation probability} and \(\E[D^p]<\infty\), then \[
\E\left[\Delta_\tf(B)^2\right]\le \sigma^2_\mu (2K)^p\,\E[D^p]\,\nu(B).
\]

\end{lemma}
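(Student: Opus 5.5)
The plan is to condition on the tree structure $(D,\vb V,\vb S)$ and exploit the fact that, given the structure, $\Delta\tf(B)$ is a linear combination of the independent Gaussian leaf parameters. Writing $\V=\V(B)$ for the active vertices and $\vb l(\vb a)$ for the leaf containing vertex $\vb a$, we have
\[
\Delta\tf(B)=\sum_{S\subseteq\{q:\epsilon_q>0\}}(-1)^{|S|}\mu_{\vb l(\bx_S+\bx^{+\epsilon}_{-S})}
=\sum_{\vb l\in\L}c_{\vb l}\,\mu_{\vb l},
\qquad c_{\vb l}:=\sum_{S:\,\vb l(\bx_S+\bx^{+\epsilon}_{-S})=\vb l}(-1)^{|S|}.
\]
Since $\E[\mu_{\vb l}\mid D,\vb V,\vb S]=0$, iterated expectations immediately give $\E[\Delta\tf(B)]=0$. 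For the second moment, conditional independence of the leaf parameters gives $\E[\Delta\tf(B)^2\mid D,\vb V,\vb S]=\sigma_\mu^2\sum_{\vb l}c_{\vb l}^2$.

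The key structural step is to show that this conditional variance vanishes off the event $\A(B)$. Following the argument in the proof of Lemma~\ref{lem-vtx-bound}, if two active vertices share a leaf, then because the tree is symmetric the rules separating vertices depend only on coordinates. Concretely, some active coordinate $q$ has no split in $(x_q,x_q+\epsilon_q]$, so every vertex shares its leaf with its partner that differs only in coordinate $q$. Pairing $S$ with $S\triangle\{q\}$ then matches vertices with opposite signs inside the same leaf, so every $c_{\vb l}=0$ and $\Delta\tf(B)=0$. Hence $\Delta\tf(B)=\Delta\tf(B)\ind\{\A(B)\}$ almost surely.

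On $\A(B)$, the $2^{m}$ active vertices ($m\le p$ the number of active dimensions) sit in distinct leaves, so each $c_{\vb l}\in\{0,\pm1\}$ and $\sum_{\vb l}c_{\vb l}^2=2^m\le 2^p$. It follows that
\[
\E[\Delta\tf(B)^2\mid D]\le\sigma_\mu^2\,2^p\,\Pr[\A(B)\mid D]\le\sigma_\mu^2\,2^p(KD)^p\,\nu(B),
\]
using Lemma~\ref{lem-vtx-bound} for the last inequality. Taking expectations over $D$, which is finite by the assumption $\E[D^p]<\infty$, yields the stated bound $\sigma_\mu^2(2K)^p\E[D^p]\nu(B)$.

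I expect the main obstacle to be the cancellation claim off $\A(B)$. It needs care in two respects: the event "some pair of active vertices shares a leaf" must be shown equivalent to "some active coordinate is unsplit in the block", and the edge conventions for $<_{l}$ at split points must not break the pairing. Both follow from the same observation already used in the proof of Lemma~\ref{lem-vtx-bound}.
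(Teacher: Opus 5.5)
Your proposal is correct and follows essentially the same route as the paper: the increment vanishes off \(\A(B)\) by pairing vertices that differ only in an unsplit active coordinate, the conditional second moment on \(\A(B)\) is \(2^{m}\sigma_\mu^2\) from the independent leaf parameters, and Lemma~\ref{lem-vtx-bound} plus averaging over \(D\) finishes the bound. Your coefficient representation \(\sum_{\vb l} c_{\vb l}\mu_{\vb l}\) and your explicit check that \(\A(B)\) is the event ``every active coordinate is split'' spell out steps the paper leaves informal.
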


\begin{proof}
Since \(\tf\) is mean zero pointwise, each term in \(\Delta_\tf(B)\) is
mean zero, and the first result is immediate.

For the second moment bound, notice that unless there is a decision rule
separating each pair of corner points along each coordinate dimension,
the increment \(\Delta_\tf(B)\) will be zero. To see this, let \(q\) be
the dimension without a separating rule. Then for any
\(S^a,S^b\subseteq [p]\) differing only in the inclusion of \(q\), the
points \(\bx_{S^a}+\bx_{-S^a}^{+\epsilon}\) and
\(\bx_{S^b}+\bx_{-S^b}^{+\epsilon}\) are in the same leaf node, and so
\(\tf(\bx_{S^a}+\bx_{-S^a}^{+\epsilon})=\tf(\bx_{S^b}+\bx_{-S^b}^{+\epsilon})\).
Thus \[
\begin{aligned}
\Delta_\tf(B) &= \sum_{S\subseteq \{q:\epsilon_q > 0\}}
    (-1)^{|S|} \tf(\bx_S + \bx_{-S}^{+\epsilon}) \\
&= \sum_{S^a=S^b\cup\{q\}} (-1)^{|S^a|}(
    \tf(\bx_{S^a} + \bx_{-S^a}^{+\epsilon}) - \tf(\bx_{S^b} + \bx_{-S^b}^{+\epsilon}))
= 0.
\end{aligned}
\] Thus by the law of total expectation, \[
\E\left[\Delta_\tf(B)^2\right]
= \E\left[\Delta_\tf(B)^2 \middle| \A\{\V(B)\}\right]
    \Pr[\A\{\V(B)\}].
\]

Since the leaf parameters are drawn independently, conditional on
\(\A\{\V(B)\}\) (all corner points in different leaf nodes), the value
of \(\tf\) at each corner point is independent, and so
\begin{equation}\protect\phantomsection\label{eq-tree-h-b}{
\begin{aligned}
\E\left[\Delta_\tf(B)^2\right]
&= \E\left[\left(\sum_{S\subseteq \{q:\epsilon_q > 0\}} (-1)^{|s|} \tf(\bx_S + \bx_{-S}^{+\epsilon})\right)^2
    \middle| \A\{\V(B)\}] \Pr[\A\{\V(B)\}\right] \\
&= \sum_{S\subseteq \{q:\epsilon_q > 0\}} \E\left[(-1)^{2|s|} \tf(\bx_S + \bx_{-S}^{+\epsilon})^2
    \middle| \A\{\V(B)\}] \Pr[\A\{\V(B)\}\right] \\
&= 2^{|\{q:\epsilon_q > 0\}|} \sigma^2_\mu \times \E[\Pr[\A\{\V(B)\}\mid D]] \\
&\le \sigma^2_\mu (2K)^p\,\E[D^p]\,\nu(B). 
\end{aligned}
}\end{equation}

\qedhere

\end{proof}

We now turn our attention to \[
\Delta\bf_T([\bx, \bx^{+\epsilon}])
= \frac{1}{T^{1/2}} \sum_{j=1}^T \Delta_j([\bx, \bx^{+\epsilon}]),
\] where we write \(\Delta_j=\Delta_{\tf_j}\) for simplicity, and in
particular the product of the increments around two blocks \(B\) and
\(C\). We say that two blocks \(B=[\bx,\bx^{+\epsilon^B}]\) and
\(C=[\by,\by^{+\epsilon^C}]\) are \emph{compatible} if for each
\(q\in[p]\), either \([x_q, x_q+\epsilon^B_q]=[y_q, y_q+\epsilon^C_q]\)
or the intervals intersect at a point or not at all. For compatible
blocks, we can calculate a volume \(\nu(B\cap C)\); by our definition of
volume above, this is the volume over the dimensions where the blocks
are identical, i.e., \[
\nu(B\cap C) := \prod_{q: [x_q, x_q+\epsilon^B_q]=[y_q, y_q+\epsilon^C_q]} |\epsilon^B_q|.
\] When there are no such dimensions, we define \(\nu(B\cap C)=1\). For
example, when \(p=3\) with two cubes \(B\) and \(C\) of the same size,
all of the following arrangements are compatible: \(B\) and \(C\)
identical, \(B\) and \(C\) sharing a face, \(B\) and \(C\) aligned on an
axis but not touching, and \(B\) and \(C\) diagonally separated.

\begin{lemma}[]\protect\hypertarget{lem-inc-bc}{}\label{lem-inc-bc}

If \(\tf\) has \emph{\(K\)-Lipschitz separation probability} and
\(\E[D^{2p}]<\infty\), then there exists a constant \(K_p<\infty\) such
that for any BART function \(\bf_T\) with \(T\) trees, and any two
compatible blocks \(B\) and \(C\), \[
\E\left[|\Delta \bf_T(B)|^2 |\Delta \bf_T(C)|^2\right]
\le K_p \nu(B)\nu(C) \left(1 + \frac{1}{T\,\nu(B\cap C)}\right).
\]

\end{lemma}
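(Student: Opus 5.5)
Write \(\Delta_j(B)\) for the increment of tree \(j\) and \(X_j=\Delta_j(B)\), \(Y_j=\Delta_j(C)\). The pairs \((X_j,Y_j)\) are i.i.d. across \(j\) and have mean zero by Lemma~\ref{lem-inc-moments}. Then
\[
\E\left[|\Delta\bf_T(B)|^2|\Delta\bf_T(C)|^2\right]
= T^{-2}\sum_{i,j,k,l}\E[X_iX_jY_kY_l].
\]
Any term in which some index appears exactly once vanishes, by independence and mean zero. Three groups of terms survive.

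\textbf{Step 1 (cross terms).} The terms with \(i=j\neq k=l\) contribute at most \(T^{-2}\cdot T^2\,\E[X^2]\E[Y^2]\). By Lemma~\ref{lem-inc-moments} this is bounded by \(\sigma_\mu^4(2K)^{2p}\E[D^p]^2\nu(B)\nu(C)\).

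\textbf{Step 2 (paired terms).} The terms with \(i=k\neq j=l\) and \(i=l\neq j=k\) are at most \(2\E[XY]^2\). By Cauchy--Schwarz and Lemma~\ref{lem-inc-moments}, \(\E[XY]^2\le \E[X^2]\E[Y^2]\lesssim\nu(B)\nu(C)\). So the first summand \(K_p\nu(B)\nu(C)\) absorbs these terms as well.

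\textbf{Step 3 (diagonal terms).} This is the main step. The diagonal terms \(i=j=k=l\) contribute \(T^{-1}\E[X^2Y^2]\) for a single tree, and I need
\[
\E[X^2Y^2]\lesssim \nu(B)\nu(C)/\nu(B\cap C).
\]
I condition on the tree structure \((D,\vb V,\vb S)\). Given the structure, \(X\) and \(Y\) are jointly Gaussian linear combinations of leaf parameters. Each involves at most \(2^p\) leaves with coefficients \(\pm1\), so
\[
\E[X^2Y^2\mid\text{structure}]\le 3\,\E[X^2\mid\cdot]\,\E[Y^2\mid\cdot]\le 3\cdot 4^p\sigma_\mu^4 .
\]
As in the proof of Lemma~\ref{lem-inc-moments}, \(X=0\) unless \(\A(\V(B))\) holds, and \(Y=0\) unless \(\A(\V(C))\) holds. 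Hence
\[
\E[X^2Y^2]\le 3\cdot4^p\sigma_\mu^4\,\Pr[\A(\V(B))\cap\A(\V(C))].
\]
By the symmetry argument of Lemma~\ref{lem-vtx-bound}, this joint event says that for each active dimension \(q\) of \(B\) some split with \(V_k=q\) falls in the interval \(I^B_q\), and likewise for each active dimension of \(C\).

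Compatibility is what makes this tractable. In each coordinate, \(I^B_q\) and \(I^C_q\) are either identical or meet in at most a point. A point has probability zero under the continuous \(f_S\), so in the second case the two intervals are effectively disjoint.

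The joint event therefore requires split-hits in a collection of intervals, one per distinct interval. Identical intervals count only once, so the distinct intervals are the union of the active intervals of \(B\) and \(C\) with the shared ones counted once. These are at most \(2p\) events of the form "some split lands in interval \(I\)," and the intervals are disjoint. Since the \(D\) rules are i.i.d. given \(D\), these events are negatively associated (as in Lemma~\ref{lem-vtx-bound}, via \citet{dubhashi1996balls}). Each event has probability at most \(KD|I|\) by Lipschitz separation.

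The product of the interval lengths is \(\nu(B)\nu(C)/\nu(B\cap C)\), because the shared side lengths appear only once. This gives
\[
\Pr[\,\cdot\mid D]\le (KD)^{2p}\,\nu(B)\nu(C)/\nu(B\cap C).
\]
Taking expectations uses \(\E[D^{2p}]<\infty\), which is exactly where the stronger moment assumption enters.

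I expect the main obstacle to be justifying negative association rigorously for the union of up to \(2p\) disjoint intervals. A second concern is the degenerate case: when a dimension is active for one block but inactive for the other, the volume conventions must still match the product of lengths. If the clean argument fails, I would write each event as a union over split indices and bound by a union over injective assignments of distinct splits to intervals. Independence of the splits given \(D\) then yields \(\prod_I (D K|I|)\) directly, at the cost of a combinatorial constant.

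Summing the three groups gives the stated bound, with \(K_p\) depending on \(\sigma_\mu\), \(K\), \(p\), \(\E[D^p]\) and \(\E[D^{2p}]\).
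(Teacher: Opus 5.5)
Your proposal is correct and follows the paper's proof essentially step for step. The paper uses the same decomposition: terms with a singleton index vanish, the $i=j\neq k=l$ and paired terms are bounded by the single-tree second-moment bound and Cauchy--Schwarz, and the diagonal terms are bounded by the Gaussian factor $3$ times the probability that a split hits every distinct active interval, where shared intervals are counted once to produce $\nu(B)\nu(C)/\nu(B\cap C)$ and the factor $\E[D^{2p}]$. Your conditioning on the tree structure before applying $\E[X^2Y^2]\le 3\,\E[X^2]\,\E[Y^2]$ is a slightly cleaner rendering of the paper's Case~3, which conditions on the event $\{\Delta_i(B)\Delta_i(C)\neq 0\}$.
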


\begin{proof}
Substituting, \[
\begin{aligned}
\E \left[|\Delta \bf_T(B)|^2 |\Delta \bf_T(C)|^2 \right]
=& \E \left[ \left( \frac{1}{T^{1/2}} \sum_{j=1}^T \Delta_j(B)\right)^2
\left(\frac{1}{T^{1/2}} \sum_{k=1}^T \Delta_k(C) \right)^2 \right] \\
=& \frac{1}{T^{2}} \sum_{i,j,k,l=1}^T \E\left[
    \Delta_i(B) \times \Delta_j(B) \times \Delta_k(C) \times \Delta_l(C) \right]
\end{aligned}
\]

There are four cases for the indices \(i,j,k,l\).

\emph{Case 1: \(i = j \neq k = l\) (\(T^2 - T\) total terms).} In this
setting, the expectation can be written as: \[
\begin{aligned}
\E\left[\Delta_i(B)^2 \times \Delta_k(C)^2 \right] &= \E\left[\Delta_i(B)^2 \right] \times \E\left[\Delta_k(C)^2 \right],
\end{aligned}
 \] since trees \(i\) and \(k\) are independent. Substituting in the
result of Eq.~\ref{eq-tree-h-b}, \[
\E\left[\Delta_i(B)^2 \times \Delta_k(C)^2\right]
\le \sigma^4_\mu (2K)^{2p} \E[D^p]^2 \nu(B)\nu(C).
\]

\emph{Case 2: \(i=k\ne j=l\) or \(i=l\ne j=k\) (\(2T^2-2T\) terms).} We
can write these terms as \[
\begin{aligned}
\E\left[\Delta_i(B) \times \Delta_i(C) \times \Delta_j(B) \times \Delta_j(C)\right]
= \E\left[\Delta_i(B) \times \Delta_i(C)\right]^2
\end{aligned}
\] with the equality following because trees \(i\) and \(j\) are i.i.d.
Then applying Cauchy-Schwarz and substituting in the result of
Eq.~\ref{eq-tree-h-b}, \[
\begin{aligned}
\E\left[\Delta_i(B) \times \Delta_i(C) \times \Delta_j(B) \times \Delta_j(C)\right]
&\le \E\left[\Delta_i(B)^2 \right] \E\left[\Delta_i(C)^2\right] \\
&\le \sigma^4_\mu (2K)^{2p} \E[D^p]^2 \nu(B)\nu(C).
\end{aligned}
\]

\emph{Case 3: \(i=j=k=l\) (\(T\) terms).} By the same argument as above,
increment \(\Delta_i(B)\) is zero unless all corner points in \(\V(B)\)
are in different leaf nodes, and similarly for \(\Delta_i(C)\). By the
argument in Lemma~\ref{lem-vtx-bound}, this event is the equivalent to
the event that a decision rule separates each pair of interval endpoints
for the intervals that define \(B\) and \(C\). Because \(B\) and \(C\)
are compatible, there is one interval per dimension for both \(B\) and
\(C\), but there is double-counting of the intervals on which \(B\) and
\(C\) agree. Thus overall, by a similar argument as above, \[
\Pr[\Delta_i(B)\Delta_i(C)\neq 0]
\le K^{2p}\E[D^{2p}] \frac{\nu(B)\nu(C)}{\nu(B\cap C)},
\] and so \[
\begin{aligned}
\E\left[\Delta_i(B)^2\,\Delta_i(C)^2\right]
&= \E\left[\Delta_i(B)^2\,\Delta_i(C)^2 \middle|
    \Delta_i(B)\Delta_i(C)\neq 0\right]
    \Pr[\Delta_i(B)\Delta_i(C)\neq 0] \\
&\le 3 \sigma^4_\mu (2K)^{2p}\E[D^{2p}]\frac{\nu(B)\nu(C)}{\nu(B\cap C)},
\end{aligned}
\] where the bound on the expectation follows from the same argument as
in Eq.~\ref{eq-tree-h-b}; the factor of 3 is due to
\(\E[X^2Y^2]\le3\E[X^2]\E[Y^2]\) for zero-mean Gaussians \(X\) and \(Y\)
without assumptions on their dependence.

\emph{Case 4: All other cases.} In this case, there is at least one
index that is distinct from the others, and so one of the differences is
independent of the others, since it involves a different tree, and the
trees are independent. Since each tree increment has mean zero, each of
these terms is zero as well.

Combining the four cases, we have shown \[
\begin{aligned}
\E &\left[|\Delta \bf_T(B)|^2 |\Delta \bf_T(C)|^2 \right] \\
&\le \frac{1}{T^{2}}\left(
    3(T^2 - T)\cdot \sigma^4_\mu (2K)^{2p} \E[D^p]^2 \nu(B)\nu(C)  +
    T\cdot 3\sigma^4_\mu (2K)^{2p} \E[D^{2p}] \frac{\nu(B)\nu(C)}{\nu(B\cap C)}
\right) \\
&\le \sigma^4_\mu (2K)^{2p} \E[D^{2p}] \nu(B)\nu(C)
    \left(3\frac{T^2 - T}{T^2}  + \frac{3}{T\,\nu(B\cap C)}\right) \\
&\le 3\sigma^4_\mu (2K)^{2p} \E[D^{2p}]  \nu(B)\nu(C)
    \left(1 + \frac{1}{T\,\nu(B\cap C)}\right).
\end{aligned}
\] Letting \(K_p := 3\sigma^4_\mu (2K)^{2p} \E[D^{2p}]\) completes the
proof.
\end{proof}

Let \(\X\) be a rectangular subset of \([0,1]^p\) and let \(E\) be a
Banach space with norm \(\norm{\cdot}_E\). When \(E=\R\), this norm is
the usual absolute value and we drop the subscript. For a function
\(f:\X\to E\), we say that \(f\in \mathcal{C}_T(\X, E)\) if there exists
a constant \(K_p<\infty\) such that for all \(\lambda > 0\) and every
pair of compatible blocks \(B\) and \(C\), \[
\Pr[\min \{\norm{\Delta_f(B)}_E, \norm{\Delta_f(C)}_E\} \ge \lambda]
\le K_p\lambda^{-4} \nu(B)\nu(C) \left(1 + \frac{1}{T\,\nu(B\cap C)}\right).
\] Applying Chebyshev's inequality and the fact that
\(\min \{\norm{\Delta_f(B)}_E, \norm{\Delta_f(C)}_E\}^4\le \norm{\Delta_f(B)}_E^2\norm{\Delta_f(C)}_E^2\),
to show that \(f\in \mathcal{C}_T(\X, E)\) it suffices to show \[
\E\left[\norm{\Delta_f(B)}_E^2 \norm{\Delta_f(C)}_E^2\right]
\le K_p\nu(B)\nu(C) \left(1 + \frac{1}{T\,\nu(B\cap C)}\right).
\] \(\cC_T(\X, E)\) is effectively a modified condition \(\cC(2, 4)\)
from \citet{bickel1971convergence}, one which allows for a \(1/T\) term
but which is more restrictive in holding for any compatible \(B\) and
\(C\), not just those that are adjacent.

To state the critical result, for \(1\le q\le p\), let \[
    f^{(q)}_x (x_1,\dots, x_{q-1},x_{q+1},\dots,x_p) := f(x_1,\dots,x_{q-1},x,x_{q+1},\dots,x_p).
\] Equivalently, we could write
\(f^{(q)}_x(\by) := f(\by_{-\{q\}} + e_q x)\), where \(e_q\) is the
\(q\)th standard basis vector in \(\R^p\). We can view \(f^{(q)}_x\) as
a function on \([0,1]\) mapping to the space of functions \(\D_{p-1}\)
with the supremum norm \(\norm{\cdot}_\infty\). Then define \[
\begin{aligned}
m_q(x, y, z)(f) &:= \min\{\norm{f_y^{(q)} - f_x^{(q)}}_\infty, \norm{f_z^{(q)} - f_y^{(q)}}_\infty\} \\
&= \min\{\Delta_{f^{(q)}}([x, y]), \Delta_{f^{(q)}}([y, z])\};
\end{aligned}
\] the second equality represents \(m_q\) as a comparison of increments
of the one-dimensional function \(f^{(q)}\). Finally, define \[
M''_q(f) := \sup_{0\le x\le y\le z\le 1} m_q(x, y, z)(f) \qand
M''(f) := \max_{1\le q\le p} M''_q(f).
\]

Then, extending Theorem 1 from \citet{bickel1971convergence}, we have
the following lemma.

\begin{lemma}[]\protect\hypertarget{lem-condition-bound}{}\label{lem-condition-bound}

Let \(q\in [p]\) be a dimension, \(0\le\chi<\xi\le 1\), and define
\(\X:=\{\bx\in[0, 1]^p : x_q\in[\chi, \xi]\}\). Then there exists a
constant \(L_p<\infty\) depending only on \(p\) and \(K_p\) such that
for all \(\lambda>0\), and for any \(f\in \cC_T(\X,E)\), \[
\Pr[M''_q(f) \ge\lambda] \le \lambda^{-4} L_p \nu(\X)^2 = \lambda^{-4} L_p (\xi-\chi)^2.
\]

\end{lemma}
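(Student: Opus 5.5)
The plan is to follow the architecture of Theorem 1 of \citet{bickel1971convergence}: a one-parameter fluctuation inequality for a Banach-space-valued process, combined with induction on the dimension $p$.

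\textbf{Setup.} Fix $q$ and view $x\mapsto f^{(q)}_x$ as a process on $[\chi,\xi]$ taking values in $\D_{p-1}$ with the supremum norm. The target bound $\Pr[M''_q(f)\ge\lambda]\le L_p\lambda^{-4}(\xi-\chi)^2$ has exactly the form of the classical maximal inequality for the three-point modulus (Billingsley's Theorem 12.5 / Bickel--Wichura Theorem 1 with $\gamma=4$, $\beta=2$). That inequality applies as soon as, for all $\chi\le x\le y\le z\le\xi$,
\[
\Pr\left[\min\{\norm{f^{(q)}_y-f^{(q)}_x}_\infty,\norm{f^{(q)}_z-f^{(q)}_y}_\infty\}\ge\lambda\right]\le L'\lambda^{-4}(z-x)^2 .
\]
So the work is to deduce this sup-norm pairwise condition from membership $f\in\cC_T(\X,E)$. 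I would first prove the maximal inequality over a finite grid of $[\chi,\xi]$, using the usual dyadic splitting argument in which the constant does not depend on the grid size. I would then pass to the supremum using the right-continuity with left limits encoded in $\D_p$.

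\textbf{Induction on $p$.} For $p=1$ the sup norm is just the absolute value. The blocks $[x,y]$ and $[y,z]$ are compatible with $\nu(B\cap C)=1$ by convention, so the $\cC_T$ condition gives $K_p\lambda^{-4}(y-x)(z-y)(1+1/T)\le 2K_p\lambda^{-4}(z-x)^2$, and the fluctuation inequality finishes the base case.

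For $p>1$, define $g:=f^{(q)}_y-f^{(q)}_x$ and $h:=f^{(q)}_z-f^{(q)}_y$, both functions on $[0,1]^{p-1}$. Consider the pair $(g,h)$, or rather the $E$-valued increments in the remaining coordinates. Increments of $g$ over a $(p-1)$-block $B'$ are increments of $f$ over the $p$-block $[x,y]\times B'$, and similarly for $h$ with $[y,z]\times C'$; these two $p$-blocks are compatible whenever $B'$ and $C'$ are. As in Bickel--Wichura, $\norm{g}_\infty$ is bounded by the value at a fixed corner plus the sum over lower-dimensional faces of the three-point moduli $M''$ in the remaining coordinates. I would then apply the induction hypothesis coordinatewise in dimension $p-1$, using a rescaled process $g/\sqrt{(y-x)}$-type normalization, to get the required $\lambda^{-4}(z-x)^2$ bound. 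The corner values themselves are handled directly by the $\cC_T$ bound for degenerate blocks.

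\textbf{Main obstacle: the $1/T$ term.} For the product blocks above,
\[
\nu(B)\nu(C)\Big(1+\tfrac{1}{T\nu(B\cap C)}\Big)=(y-x)(z-y)\,\nu(B')\nu(C')+\tfrac1T(y-x)(z-y)\frac{\nu(B')\nu(C')}{\nu(B'\cap C')}.
\]
In the $q$-direction this is harmless. In the other $p-1$ coordinates, however, the second term scales only like a single power of volume ($\beta=1$). That exponent is exactly the borderline at which the dyadic chaining in the induction step fails to be summable.

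My first attempt would be to truncate the chaining in the inner coordinates at resolution $\nu\asymp T^{-1}$:
\begin{itemize}
\item Above that scale, $1/(T\nu(B'\cap C'))\le1$, so the condition is genuinely $\cC(2,4)$ and the chain sums geometrically.
\item Below that scale, I would bound the remaining fine-scale oscillation crudely. The idea is that the dangerous term carries a $1/T$ prefactor, so a chaining sum of length $O(\log T)$ with $\beta=1$ contributes at most $O(T^{-1}\log T)\cdot T^{1-\delta}$-type factors that can be absorbed, given the $\lambda^{-4}$ scaling.
\end{itemize}
If this bookkeeping does not close, the fallback is to restate the induction hypothesis itself in the $\cC_T$ form, carrying the $(1+1/(T\nu))$ factor through, and to check that the final one-dimensional step only ever sees $\nu(B\cap C)$ equal to full inner widths, where the factor is bounded by $2$.
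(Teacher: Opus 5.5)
You follow the same architecture as the paper: the Bickel--Wichura Theorem~1 induction, the differences $g=f^{(q)}_y-f^{(q)}_x$ and $h=f^{(q)}_z-f^{(q)}_y$, a bound on $\norm{g}_\infty$ by $(p-1)M''(g)$ plus corner values, corner terms from $\cC_T$ on degenerate blocks, and the base case from Bickel--Wichura. One correction on the corners: you need the maximum over all $2^{p-1}$ corners, not a single fixed corner, since $f$ need not vanish on the lower boundary and $M''$ does not control two-point differences.

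The real problem is that the proposal does not close, and the obstruction is not where you place it. Apply the induction hypothesis to $g$ alone, as in your $g/\sqrt{y-x}$ step. The relevant $p$-blocks are then $[x,y]\times B'$ and $[x,y]\times C'$, which coincide in coordinate $q$. So $\nu(B\cap C)=(y-x)\,\nu(B'\cap C')$, and the hypothesis reads $K_p\lambda^{-4}(y-x)^2\nu(B')\nu(C')\bigl(1+\frac{1}{T(y-x)\nu(B'\cap C')}\bigr)$. After normalizing, $g/\sqrt{y-x}$ lies in $\cC_{T(y-x)}$, not $\cC_T$, and $T(y-x)$ can be arbitrarily small.

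What the induction actually gives (e.g.\ for $p=2$ via the one-dimensional case) is $\Pr[M''(g)\ge\lambda]\lesssim\lambda^{-4}\bigl((y-x)^2+(y-x)/T\bigr)$. The union bound over $g$ and $h$ then leaves a term $\lambda^{-4}(z-x)/T$. That is $\beta=1$ in the outer coordinate $q$, exactly where the final one-dimensional fluctuation inequality needs $\beta>1$.

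This loss is real, not bookkeeping. Take symmetric BART with $p=2$ and consider the event that exactly one tree splits on coordinate $q$ inside $(x,y]$, and that tree has depth $3$ with its other two splits on the remaining coordinate. This event has probability $\gtrsim T(y-x)$ when $T(y-x)\le1$. On it, $g$ is $T^{-1/2}$ times a step function with two jointly nondegenerate Gaussian jumps. Hence $\Pr[M''(g)\ge cT^{-1/2}]\gtrsim T(y-x)$, which exceeds $L\,c^{-4}T^2(y-x)^2$ once $y-x\ll 1/T$.

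Neither of your remedies fixes this. The truncation in the inner coordinates targets the wrong direction. In the route you actually propose (induction on $g$ alone), neighbouring inner blocks keep both widths in the chaining direction; for $p=2$ the $1/T$ term is $(y-x)(u_2-u_1)(u_3-u_2)/T$. Separately, the claim that sub-$T^{-1}$ oscillation contributes absorbable ``$O(T^{-1}\log T)\cdot T^{1-\delta}$-type factors'' is not backed by any estimate. The fallback of carrying $1+1/(T\nu)$ through the induction reproduces the $(z-x)/T$ term. That factor is bounded by $2$ only in the corner terms, where $\nu(B\cap C)=1$, not in the bounds for $M''(g)$ and $M''(h)$.

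What is missing is an argument that keeps the minimum over $g$ and $h$ rather than passing to $\max\{M''(g),M''(h)\}$. Only the mixed pairs $[x,y]\times B'$, $[y,z]\times C'$ carry the harmless factor $(y-x)(z-y)$ you computed. You would need a joint two-process maximal inequality, or a strengthened inductive statement, built on those pairs.

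Be aware that the paper's proof takes the same route. It handles this step by asserting that $g\in\cC_T$ ``with $\nu$ scaled by $y-x$'' and reading off $L_{p-1}\lambda^{-4}(y-x)^2$. The example above shows that intermediate bound fails for BART, so the paper does not supply the missing step either.
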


\begin{proof}
The proof closely follows the structure of the proof of Theorem 1 in
\citet{bickel1971convergence}, specializing certain aspects (e.g.,
Lebesgue measure, \(\beta=2\)) while generalizing to cover the
additional term scaled by \(1/T\) and functions which may not vanish on
their lower boundary. We proceed by induction on \(p\).

When \(p=1\), and the blocks \(B\) and \(C\) are adjacent the condition
\(f\in\cC_T(\X, E)\) means in fact that \[
\Pr[\min \{\norm{\Delta_f(B)}_E, \norm{\Delta_f(C)}_E\} \ge \lambda]
\le \lambda^{-4} K'_1 \nu(B)\nu(C),
\] since in one dimension the term \(\nu(B\cap C)=1\) and the \(1/T\)
term can be absorbed into the constant. Since for adjacent \(B\) and
\(C\), \(\nu(B)\nu(C)\le\frac{1}{4}(\nu(B)+\nu(C))^2\), this condition
is stronger than \(\cC(2,4)\) in Bickel \& Wichura's notation, and the
base case then follows from the base case (i) proven in Theorem 1 of
their work. That case does not depend on the function vanishing at its
lower boundary, which is a condition of their Theorem 1.

Before turning to the inductive step, we first claim that for any \(p\)
and any \(f:\X\to E\), that \[
\norm{f}_\infty \le p M''(f) + \max_{\vbg\delta\in\V(\X)} \norm{f(\vbg\delta)}_E,
\] i.e., any function is bounded by \(p\) times its maximum increment
\(M''\) plus the maximum of its values at the vertices of the block
\(\X\). To establish this claim, without any loss of generality we can
take \(\X=[0,1]^p\), which will simplify the notation. First notice that
for any \(a,b,c\in E\), \[
\norm{a}_E \le \min\{\norm{a-b}_E, \norm{c-a}_E\} + \max\{\norm{b}_E, \norm{c}_E\}.
\] Then, for any \(\vbg\delta\in\{0,1\}^k\) for \(1\le k\le p\), let
\(\vb x_{\vbg\delta\bullet}\) denote the point in \([0,1]^p\) with
\(x_i = \delta_i\) for \(1\le i\le k\) and with the remaining
coordinates identical to those of \(\bx\). Thus, for example,
\(\bx_{11\bullet}\) and \(\bx_{10\bullet}\) differ only in coordinate 2.
Then for any \(\bx\in[0,1]^p\), \[
\begin{aligned}
\norm{f(\bx)}_E &\le \min\{\norm{f(\bx) - f(\bx_{0\bullet})}_E, \norm{f(\bx_{1\bullet}) - f(\bx)}_E\}
    +\max\{\norm{f(\bx_{0\bullet})}_E, \norm{f(\bx_{1\bullet})}_E\} \\
&\le M''_1(f) +\max\{\norm{f(\bx_{0\bullet})}_E, \norm{f(\bx_{1\bullet})}_E\} \\
\end{aligned}
\] By the same argument, \[
\begin{aligned}
\norm{f(\bx_{0\bullet})}_E &\le M''_2(f) + \max\{\norm{f(\bx_{00\bullet})}_E, \norm{f(\bx_{01\bullet})}_E\} \qand \\
\norm{f(\bx_{1\bullet})}_E &\le M''_2(f) + \max\{\norm{f(\bx_{10\bullet})}_E, \norm{f(\bx_{11\bullet})}_E\}.
\end{aligned}
\] Substituting these two inequalities into the previous one, we can
pull the \(M''_2(f)\) terms out and collapse the maximum, yielding \[
\norm{f(\bx)}_E \le M''_1(f) + M''_2(f) + \max\{\norm{f(\bx_{00\bullet})}_E, \norm{f(\bx_{01\bullet})}_E,
    \norm{f(\bx_{10\bullet})}_E, \norm{f(\bx_{11\bullet})}_E\}.
\] Continuing in this way for all \(p\) dimensions, we obtain \[
\norm{f(\bx)}_E \le \sum_{q=1}^p M''_q(f) + \max_{\vbg\delta\in\{0,1\}^p} \norm{f(\vbg\delta)}_E
\le \sum_{q=1}^p M''(f) + \max_{\vbg\delta\in\{0,1\}^p} \norm{f(\vbg\delta)}_E,
\] which proves the claimed inequality.

For the inductive step, we can without loss of generality take \(q=1\).
Define functions \(g := f_y^{(1)}-f_x^{(1)}\) and
\(h := f_z^{(1)}-f_y^{(1)}\) (no relation to BART or tree functions). We
have by definition of \(m_1\) and application of the above inequality
that for any \((x,y,z)\) with \(\chi\le x\le y\le z\le\xi\), \[
\begin{aligned}
m_1(x, y, z)(f) &= \min\{\norm{g}_\infty, \norm{h}_\infty\} \\
&\le \min\{ (p-1)M''(g) + \max_{\vbg\delta\in\V(\X_{-1})} \norm{g(\vbg\delta)}_E,\
          (p-1)M''(h) + \max_{\vbg\delta\in\V(\X_{-1})} \norm{h(\vbg\delta)}_E \} \\
&\le \min\{ (p-1)\max\{M''(g),M''(h)\} + \max_{\vbg\delta\in\V(\X_{-1})} \norm{g(\vbg\delta)}_E, \\
    &\qquad\qquad (p-1)\max\{M''(g),M''(h)\} + \max_{\vbg\delta\in\V(\X_{-1})} \norm{h(\vbg\delta)}_E \} \\
&= (p-1)\max\{M''(g),M''(h)\} + \min\{\max_{\vbg\delta\in\V(\X_{-1})} \norm{g(\vbg\delta)}_E,
          \max_{\vbg\delta\in\V(\X_{-1})} \norm{h(\vbg\delta)}_E \} \\
&\le (p-1)\max\{M''(g),M''(h)\} + \max_{\vbg\delta,\vbg\delta'\in\V(\X_{-1})}
    \min\{\norm{g(\vbg\delta)}_E, \norm{h(\vbg\delta')}_E \},
\end{aligned}
\] where \(\X_{-1}\) is \(\X\) with the \(q=1\) coordinate removed. We
will bound each term on the right-hand side separately.

To bound \(M''(g)\), notice that for any block \(B'\) in \(\X_{-1}\),
\(\Delta_g(B') = \Delta_f(B)\), where \(B=[x, y]\times B'\). Thus
\(g\in\cC_T\) in \(p-1\) dimensions, with \(\nu\) scaled by \(y-x\) in
\(\X_{-1}\). By the inductive hypothesis
\citep[see][]{bickel1971convergence}, we therefore have for each
remaining coordinate \(r\) that \[
\Pr[M''_r(g) \ge \lambda] \le L_{p-1}\lambda^{-4}(y-x)^2.
\] A union bound over coordinates then yields \[
\Pr[M''(g) \ge \lambda] \le (p-1)L_{p-1}\lambda^{-4}(y-x)^2.
\] The same inequality follows for \(M''(h)\), with \(z-y\) in place of
\(y-x\). Thus by a union bound, \[
\begin{aligned}
\Pr&[(p-1)\max\{M''(g),M''(h)\} \ge \lambda] \\
&\le \Pr[(p-1)M''(g) \ge \lambda] + \Pr[(p-1)M''(h) \ge \lambda] \\
&\le L_{p-1} (p-1)^5 \lambda^{-4} (z-x)^2 .
\end{aligned}
\] By a similar argument, we can bound
\(\min\{\norm{g(\vbg\delta)}_E, \norm{h(\vbg\delta')}_E\}\) for any
\(\vbg\delta,\vbg\delta'\in\V(\X_{-1})\), and then multiply this bound
by \(2^{2(p-1)}\) to obtain a bound on
\(\max_{\vbg\delta,\vbg\delta'\in\V(\X_{-1})} \min\{\norm{g(\vbg\delta)}_E, \norm{h(\vbg\delta')}_E \}\).
But by the original hypothesis that \(f\in\cC_T(\X,E)\), for any
\(\vbg\delta,\vbg\delta'\), \[
\begin{aligned}
\Pr[\min\{\norm{g(\vbg\delta)}_E, \norm{h(\vbg\delta')}_E\} \ge \lambda]
&\le K_p \lambda^{-4} (z-y)(y-x)\left(1+\frac{1}{T}\right) \le 2K_p \lambda^{-4} (z-x)^2,
\end{aligned}
\] since the intervals \([x, y]\) and \([y, z]\) do not overlap.

Since for any random variables \(U\) and \(V\),
\(\Pr[U+V\ge \lambda] \le \Pr[U\ge \lambda/2] + \Pr[V\ge \lambda/2]\),
we have established \[
\Pr[m_1(x, y, z)(f) \ge \lambda]
\le 2^4\cdot(L_{p-1} (p-1)^5 + 2^{2p}K_p)\lambda^{-4} (z-x)^2 .
\] But this in fact means that \(f^{(1)}\) meets the hypotheses of the
base case \(p=1\) as a univariate \((E')\)-valued process, as discussed
above. Thus the conclusion for the inductive step follows by letting
\(L_p := K_1(2,4) 2^4\cdot(L_{p-1} (p-1)^5 + 2^{2p}K_p)\), where
\(K_1(2,4)\) is the universal constant from the univariate case in
\citet{bickel1971convergence}.
\end{proof}

We can now prove the main convergence result, Theorem~\ref{thm-gp}.

\begin{proof}[Proof of Theorem~\ref{thm-gp}]
We apply the corollary to Theorem 2 of \citet{bickel1971convergence},
which requires convergence of the finite-dimensional distributions,
existence of a continuous version of the limiting process, and a
tightness condition, specifically the convergence of a certain modulus
to zero. Lemma~\ref{lem-clt-fdd} establishes f.d.d. convergence.
Existence of a sample-continuous version of the limiting GP is given by
the Kolmogorov continuity theorem and Lemma~\ref{lem-leaf-prob}. The
argument of Theorem 3 of \citet{bickel1971convergence} then establishes
the tightness condition, with our Lemma~\ref{lem-condition-bound}
replacing their Theorem 1. The only additional condition for Theorem 3
to apply is that \(\bf_T\in\cC_T([0,1]^p, \R)\) with a constant uniform
in \(T\). This is established by Lemma~\ref{lem-inc-bc}, whose Lipschitz
hypothesis is satisfied by Lemma~\ref{lem-leaf-prob} under the
assumptions of this theorem.
\end{proof}

\newpage{}

\section{Random tree features proofs}\label{sec-app-proofs-rf}

To prove Theorem~\ref{thm-finite-tree-conv}, we will need to prove a
slight generalization of Theorem 6 of \citet{rudi2017generalization}
(herein RR17), which is sufficient along with the argument below in
Appendix~\ref{sec-prf-finite-conv} to establish
Theorem~\ref{thm-finite-tree-conv}. In particular, RR17 considers the
setting in which each random feature maps to a single function; this
means that a feature's contribution to the kernel operator is rank-1. In
contrast, for BART, each random tree produces \(2^D\) total leaf
indicators, and so each feature's contribution to the kernel operator is
rank-\(2^D\), where \(D\) is random. In the first subsection, we carry
out this generalization, culminating in Theorem~\ref{thm-rr17-gen}. The
second subsection then specializes the result to our case and proves
Theorem~\ref{thm-finite-tree-conv}.

\subsection{Generalizing RR17}\label{generalizing-rr17}

First, we need to set up some additional notation and establish some
basic properties. Then we will generalize certain intermediate results
from RR17, which will be used in the proof of the main result.

Throughout we use \(\E_X\) and \(\norm{\cdot}_{X}\) to denote the
expectation and \(L^2\) norm with respect to the distribution of the
covariates \(X\). Expectations without a subscript average over all
relevant random quantities. We use \(\norm{\cdot}_{op}\) to denote the
operator norm for linear operators on \(L^2(X)\), where the input and
output norms are \(\norm{\cdot}_X\).

To start, defined a rescaled \(\Sigma\) by
\(\Sigma_\b = \sigma_\mu^2 \Sigma\). Then let
\(\Sigma_\omega: L^2(X) \to L^2(X)\) represent the one-tree
approximation of \(\Sigma\), defined as \[
\Sigma_{\omega} := \sum_{\vb l\in\L} \psi_{\vb l} \otimes \psi_{\vb l},
\] where
\((\Sigma_w g)(x) = \sum_{\vb l\in\L} \psi_{\vb l}(x) \E_X[\psi_{\vb l}(X) g(X)]\),
and \(\psi_{\vb l}\) is the indicator function for leaf \(\vb l\) of the
tree defined by \(\omega\). Then we have
\(\Sigma=\E_\omega[\Sigma_\omega]\), i.e., averaging over the
distribution of the random features.

Because we have assumed \(g_0 \in \mathcal{H}_\b\), the source condition
is satisfied, where for \(\tilde g \in L^2(X)\),
\[g_0(x) = (\Sigma^{1/2} \tilde g)(x) \text{ a.s.}\] We will define
\(R\) as the \(L^2(X)\) norm of the function \(\tilde g\) (i.e.,
\(R := \norm{\tilde g}_X\)).

We define additional operators analogously to RR17. As in RR17, these
operators are implicitly conditional on a particular draw of the random
features \(\omega\). As in the main text, let
\(M = \sum_{j=1}^T 2^{D_j}\) be the total number of leaf nodes across
the \(T\) trees, which depends on \(\omega\) as well.

\begin{itemize}
\tightlist
\item
  \(S_T: R^M \to L^2(X), \quad (S_T\beta)(\vb x) = \phi_T(\vb x)^\top \beta\)
\item
  \(S^*_T: L^2(X) \to R^M, \quad (S^*_T f)_j = \E_X[f(X)\phi_{T,j}(X)]\)
\item
  \(\Sigma_T: L^2(X) \to L^2(X), \quad (\Sigma_T f)(\vb z) = \E_X[\phi_T(X)^\top \phi_T(\vb z) f(X)]\),
  i.e., \(\Sigma_T = T^{-1} \sum_{j=1}^T \Sigma_{\omega_j}\)
\item
  \(C_T: R^M \to R^M, \quad C_T(\vb z) = \E_X[\phi_T(X) \phi_T(X)^\top] \vb z\)
\end{itemize}

We define \(\hat S_T\), \(\hat S^*_T\), and \(\hat C_T\) as the
empirical versions of the above operators, i.e., with the expectation
taken with respect to the empirical distribution of the covariates
\(X_1,\dots,X_n\).

Remark 8 of RR17 carries over, stating that

\begin{enumerate}
\def\labelenumi{(\arabic{enumi})}
\tightlist
\item
  \(\Sigma\) and \(\Sigma_T\) are trace class
\item
  \(\Sigma_T=S_T S^*_T\), \(C_T = S^*_T S_T\), and
  \(\hat C_T = \hat S^*_T \hat S_T\).
\item
  \(\Sigma\), \(\Sigma_T\), \(C_T\), and \(\hat C_T\) are self-adjoint
  and positive operators, with spectrum in \([0, 1]\).
\end{enumerate}

We can also establish some basic properties of the kernel operators.
Intuitively, these are sufficient for the random tree features to behave
like the continuous, univariate features studied in RR17.

\begin{lemma}[]\protect\hypertarget{lem-sigma-prop}{}\label{lem-sigma-prop}

\(\Sigma\), \(\Sigma_T\), and \(\Sigma_\omega\) have the following
properties:

\begin{enumerate}
\def\labelenumi{\arabic{enumi}.}
\tightlist
\item
  \(\Sigma - \Sigma_T\) is an average of \(T\) i.i.d., zero-mean,
  self-adjoint random operators on \(L^2(X)\).
\item
  Uniform operator bound: \(\norm{\Sigma_\omega}_{op} \leq 1\) for all
  \(\omega\)
\item
  Uniform trace bound: \(\text{Tr}(\Sigma_\omega) = \leq 1\)
\item
  Sub-idempotency: \(\Sigma^2_\omega \leq \Sigma_\omega\)
\end{enumerate}

\end{lemma}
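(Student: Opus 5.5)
The plan is to work directly from the definition $\Sigma_\omega=\sum_{\vb l\in\L}\psi_{\vb l}\otimes\psi_{\vb l}$ and exploit the fact that, for a fixed tree $\omega$, the leaf indicators $\{\psi_{\vb l}\}_{\vb l\in\L}$ have disjoint supports and sum to one pointwise on $[0,1]^p$. Consequently they are mutually orthogonal in $L^2(X)$, and $\norm{\psi_{\vb l}}_X^2=\Pr_X[X\in\text{leaf }\vb l]=:p_{\vb l}$ with $\sum_{\vb l}p_{\vb l}=1$. I will establish the four properties in the order 2, 4, 3, 1, since the spectral description obtained for property 2 gives property 4 immediately.

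For the operator bound, I will write $\Sigma_\omega=\sum_{\vb l}p_{\vb l}\,e_{\vb l}\otimes e_{\vb l}$, where $e_{\vb l}=\psi_{\vb l}/\sqrt{p_{\vb l}}$ for $p_{\vb l}>0$ (leaves with $p_{\vb l}=0$ give the zero operator and can be dropped). Since the $e_{\vb l}$ are orthonormal, this is a spectral decomposition: $\Sigma_\omega$ is self-adjoint and positive with eigenvalues $p_{\vb l}\in[0,1]$ and kernel equal to the orthogonal complement of their span. Hence $\norm{\Sigma_\omega}_{op}=\max_{\vb l}p_{\vb l}\le1$. As a sanity check, $\langle g,\Sigma_\omega g\rangle_X=\sum_{\vb l}\E_X[\psi_{\vb l}g]^2\le\sum_{\vb l}p_{\vb l}\E_X[\psi_{\vb l}g^2]=\norm{g}_X^2$ by Cauchy--Schwarz on each leaf. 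Sub-idempotency then follows from the same decomposition: $\Sigma_\omega^2=\sum_{\vb l}p_{\vb l}^2\,e_{\vb l}\otimes e_{\vb l}$, and $p_{\vb l}^2\le p_{\vb l}$, so $\Sigma_\omega-\Sigma_\omega^2$ is positive.

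The trace bound is the same computation: $\mathrm{Tr}(\Sigma_\omega)=\sum_{\vb l}\norm{\psi_{\vb l}}_X^2=\sum_{\vb l}p_{\vb l}=\E_X\bigl[\sum_{\vb l}\psi_{\vb l}(X)\bigr]=1$. So the bound in fact holds with equality, and it requires the convention that each $\bx$ lies in exactly one leaf, which is guaranteed by the $<_{l_k}$ convention in the definition of symmetric trees. For property 1, I will write $\Sigma-\Sigma_T=T^{-1}\sum_{j=1}^T(\Sigma-\Sigma_{\omega_j})$. The summands are i.i.d. because the trees $\omega_j$ are drawn i.i.d. from the prior, and each is self-adjoint since both $\Sigma$ and $\Sigma_{\omega_j}$ are.

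The only mildly delicate step is the zero-mean claim $\E_\omega[\Sigma_\omega]=\Sigma$. This must be justified as a Bochner (or weak) integral of operators. I will verify it in the weak sense: for $f,g\in L^2(X)$,
\[
\E_\omega\langle f,\Sigma_\omega g\rangle_X=\iint f(\bx)g(\bx')\,\E_\omega\Bigl[\sum_{\vb l}\psi_{\vb l}(\bx)\psi_{\vb l}(\bx')\Bigr]\rho(\bx)\rho(\bx')\,d\bx\,d\bx'.
\]
Interchanging expectations is justified by Fubini, since the integrand is bounded by $|f(\bx)g(\bx')|$, which is integrable. The inner expectation equals $\Pr[\bx\adj\bx']=k_\b(\bx,\bx')/\sigma_\mu^2$ by Proposition~\ref{prp-tree-cov}, which recovers $\langle f,\Sigma g\rangle_X$. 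Since every $\Sigma_\omega$ is uniformly bounded with trace one, the Bochner integral exists in trace class and agrees with this weak limit. This completes property 1 and shows that $\Sigma$ is trace class with trace one.
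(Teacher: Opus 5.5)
Your proposal is correct and matches the paper's proof in substance. Both arguments rest on the orthogonality of the disjoint leaf indicators. Your explicit diagonalization $\Sigma_\omega=\sum_{\vb l}p_{\vb l}\,e_{\vb l}\otimes e_{\vb l}$ does the work of the paper's Cauchy--Schwarz norm bound and its use of $\Sigma_\omega^2\preceq\norm{\Sigma_\omega}_{op}\Sigma_\omega$. Your Fubini argument via Proposition~\ref{prp-tree-cov} also justifies $\E_\omega[\Sigma_\omega]=\Sigma$, which the paper simply asserts holds by definition.

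The only slip is in your sanity check: $\sum_{\vb l}p_{\vb l}\E_X[\psi_{\vb l}g^2]$ is only $\le\norm{g}_X^2$ (since $p_{\vb l}\le 1$), not equal to it. Nothing else depends on that equality.
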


\begin{proof}
We will prove each property in turn.

\emph{Property 1.} We can re-write \(\Sigma - \Sigma_T\) as \[
\Sigma - \Sigma_T = \frac{1}{T} \sum_{j=1}^T (\Sigma - \Sigma_{\omega_j}).
\] Since \(\omega_1,\dots,\omega_T\) are i.i.d.,
\(\Sigma - \Sigma_{\omega_j}\) are also i.i.d. By definition,
\(\Sigma = \E_\omega[\Sigma_\omega]\), and
\(\E[\Sigma - \Sigma_T] = \Sigma - \E_\omega[\Sigma_\omega] = 0\).
Finally, we can show the self-adjointness of \(\Sigma_\omega\):
\begin{align*}
  \langle \Sigma_\omega f, g \rangle 
  & := \E_X[(\Sigma_w f)(X)g(X)] \\
  &= \E_X\left[ \left(\sum_{\vb l} \psi_{\vb l}(X)
      \E_{Z\sim X}[\psi_{\vb l}(Z) f(Z)] \right) g(X) \right] \\
  &= \sum_{\vb l}
      \E_X[\psi_{\vb l}(X) f(X)] \E_X[\psi_{\vb l}(X) g(X)] \\
  &= \E_X\left[ \left(\sum_{\vb l} \psi_{\vb l}(X)
      \E_{Z\sim X}[\psi_{\vb l}(Z) g(Z)] \right) f(X) \right] \\
  &= \langle f, \Sigma_\omega g \rangle.
\end{align*}
Since \(\Sigma\) and \(\Sigma_T\) are linear combinations of
self-adjoint operators, \(\Sigma - \Sigma_T\) is self-adjoint as a
result.

\emph{Property 2.} We can bound the operator norm of \(\Sigma_\omega\)
by applying Cauchy-Schwarz. More concretely, for any \(g \in L^2(X)\),
\begin{align*}
\norm{\Sigma_\omega g}_X^2
&= \norm{\sum_{\vb l} \psi_{\vb l}(X) \E_{Z\sim X}[\psi_{\vb l}(Z) g(Z)]}_X^2 \\
&= \sum_{\vb l} \E_X[\psi_{\vb l}(X)] (\E_{Z\sim X}[\psi_{\vb l}(Z) g(Z)])^2 \\
&\le \sum_{\vb l} \E_X[\psi_{\vb l}(X)] \E_{Z\sim X}[\psi_{\vb l}(Z)^2]\E_{Z\sim X}[g(Z)^2] \\
&= \norm{g}_X^2 \sum_{\vb l} \E_X[\psi_{\vb l}(X)]^2  \\
&\le \norm{g}_X^2 \sum_{\vb l} \E_X[\psi_{\vb l}(X)] \\
&= \norm{g}_X^2
\end{align*} 
where the second line follows from the fact that the \(\psi_{\vb l}\)
are orthogonal, the third line is Cauchy-Schwarz, and the penultimate
lin follows from \(\E_X[\psi_{\vb l}(X)] \leq 1\) for all \({\vb l}\).
Taking the square root of both sides yields
\(\norm{\Sigma_\omega}_{op} \leq 1\).

\emph{Property 3.} We compute the trace of \(\Sigma_\omega\) using the
linearity of the trace:
\begin{align*} 
\text{Tr}(\Sigma_\omega) &= \sum_{\vb l } \text{Tr}(\psi_{\vb l}\otimes \psi_{\vb l}) \\
&= \sum_{\vb l} \norm{\psi_{\vb l}}^2_X \\
&= 1, 
\end{align*}
which follows directly from the fact that for \(\vb l \in \mathcal{L}\),
\(\psi_{\vb l}\) can only take on a value of 1 for one \(\vb l\).

\emph{Property 4.} Sub-idempotency follows from applying the uniform
operator bound (i.e., property 2): \[
\Sigma^2_\omega \leq \norm{\Sigma_\omega}_{op} \Sigma_\omega \leq \Sigma_\omega. \qedhere
\]
\end{proof}

Finally, we define a generalized notion of the maximum random features
dimension as \[
\cF_{\infty}(\lambda) := \sup_\omega \Tr(\Sigma_\lambda^{-1} \Sigma_\omega), \text{ where } \Sigma_\lambda = \Sigma + \lambda I.
\] \(\cF_\infty(\lambda)\) is bounded by \(1/\lambda\), by property 3 in
Lemma~\ref{lem-sigma-prop}. Furthermore, we define \[
\cN(\lambda) := \Tr(\Sigma_\lambda^{-1} \Sigma) = \E_\omega[\Tr(\Sigma_\lambda^{-1} \Sigma_\omega)].
\]

\subsubsection{Generalized result and proof
approach}\label{generalized-result-and-proof-approach}

With these preliminaries, out of the way, now state a generalization of
Theorem 6 in RR17 for our setting. We maintain the use of \(\kappa\),
the bound on the random features, which here equals 1, to aid in
comparison with RR17 as well as future generalizations. So, e.g., we
have \(\cF_\infty(\lambda) \leq \kappa^2/\lambda\), since \(\kappa:=1\)
here, but the same general bound is used by RR17, and would hold here
for an operator \(\Sigma\) bounded by \(\kappa^2\) rather than 1.

\begin{theorem}[Alternative RR17 Theorem
6]\protect\hypertarget{thm-rr17-gen}{}\label{thm-rr17-gen}

For ridge penalty \(0 < \lambda \leq \frac{3}{4} \norm{\Sigma}_{op}\),
and for all \(\delta \in (0,1)\), if
\begin{equation}\protect\phantomsection\label{eq-M-bound-rudi}{
T \geq 18(q_0 + \cF_\infty(\lambda)) \log\left(\frac{84\kappa^2}{\lambda \delta}\right) 
}\end{equation} and
\begin{equation}\protect\phantomsection\label{eq-n-bound-rudi}{
n \geq 18(2 + \kappa^2/\lambda) \log\left( \frac{28\kappa^2}{\lambda \delta} \right),
}\end{equation} where
\(q_0 := 2(2+\kappa^2/\norm{\Sigma}_{op} + \kappa^2)\), then with
probability at least \(1 - \delta\),
\begin{equation}\protect\phantomsection\label{eq-rudi-bound}{
\norm{\hat g_{\lambda,T,n} - g_0}_X \le 
8 \left( \frac{(B+ 2R\kappa) \kappa}{\sqrt{\lambda}n } 
+ \sqrt{\frac{(\sigma + R\kappa)^2 \cN(\lambda)}{n}} \right)
\log \left(\frac{14}{\delta} \right) + 3R\sqrt{\lambda},
}\end{equation} where \(B\) and \(\sigma\) are determined by the
sub-exponential condition on \(Y\), i.e., for
\(\E\left[\left|Y\right|^c \mid X\right] \le \frac{1}{2} c! \sigma^2 B^{c-2}\),
for all \(c \geq 2\).

\end{theorem}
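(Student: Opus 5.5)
We will follow the architecture of the proof of Theorem 6 in RR17 step by step, changing only the places where that proof uses the rank-one structure of a single random feature. Those places will be replaced by the operator properties in Lemma~\ref{lem-sigma-prop}.

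\textbf{Error decomposition.} Write $\hat g_{\lambda,T,n} = S_T(\hat C_T+\lambda I)^{-1}\hat S_T^*\hat y$. Following RR17, we split
\[
\hat g_{\lambda,T,n}-g_0 = \underbrace{(\hat g_{\lambda,T,n} - \tilde g_{\lambda,T,n})}_{\text{sample error}} + \underbrace{(\tilde g_{\lambda,T,n}-g_{\lambda,T})}_{\text{empirical-vs-population}} + \underbrace{(g_{\lambda,T}-g_0)}_{\text{approximation}} .
\]
Here $\tilde g_{\lambda,T,n}$ replaces $\hat y$ by $g_0(X_i)$, and $g_{\lambda,T}=\Sigma_T(\Sigma_T+\lambda)^{-1}g_0$.

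\textbf{Approximation term.} Using $g_0=\Sigma^{1/2}\tilde g$, the approximation term is bounded by
\[
\lambda\norm{(\Sigma_T+\lambda)^{-1}\Sigma^{1/2}}_{op}R\le \lambda^{1/2}R\,\norm{\Sigma_{T,\lambda}^{-1/2}\Sigma_\lambda^{1/2}}_{op}.
\]
It therefore suffices to show $\beta:=\norm{\Sigma_\lambda^{-1/2}(\Sigma-\Sigma_T)\Sigma_\lambda^{-1/2}}_{op}\le 1/2$, which gives the factor $\sqrt2$ and, together with the analogous empirical term, the $3R\sqrt\lambda$.

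\textbf{Concentration of $\beta$.} This is where rank one was used, and it is the main obstacle. RR17 applies Bernstein's inequality for i.i.d. self-adjoint operators, Tropp's intrinsic-dimension version, to
\[
\zeta_j=\Sigma_\lambda^{-1/2}(\Sigma-\Sigma_{\omega_j})\Sigma_\lambda^{-1/2}.
\]
Property~1 of Lemma~\ref{lem-sigma-prop} gives the i.i.d. zero-mean self-adjoint structure. For the uniform bound we use the generalized maximum dimension:
\[
\norm{\Sigma_\lambda^{-1/2}\Sigma_\omega\Sigma_\lambda^{-1/2}}_{op}\le \Tr(\Sigma_\lambda^{-1}\Sigma_\omega)\le\cF_\infty(\lambda),
\]
because $\Sigma_\omega$ is positive. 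For the variance proxy, sub-idempotency (Property~4) lets us write
\[
\E\bigl[(\Sigma_\lambda^{-1/2}\Sigma_\omega\Sigma_\lambda^{-1/2})^2\bigr]\preceq \cF_\infty(\lambda)\,\E\bigl[\Sigma_\lambda^{-1/2}\Sigma_\omega\Sigma_\lambda^{-1/2}\bigr]=\cF_\infty(\lambda)\,\Sigma_\lambda^{-1}\Sigma .
\]
This inequality is the same as in the rank-one case, and the intrinsic dimension is bounded by $\Tr(\Sigma_\lambda^{-1}\Sigma)/\norm{\Sigma_\lambda^{-1}\Sigma}_{op}$. The delicate step is checking that the operator-square bound $A\Sigma_\omega A\,A\Sigma_\omega A\preceq \norm{A\Sigma_\omega A}\,A\Sigma_\omega A$ is all that is needed, without appealing to the feature form $\phi\otimes\phi$. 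The trace bound (Property~3) replaces $\kappa^2$, which gives $\cF_\infty\le\kappa^2/\lambda$; the operator bound (Property~2) gives spectra in $[0,1]$. With these inputs, the condition on $T$ in \eqref{eq-M-bound-rudi}, with $q_0$ absorbing the constants and the $\norm{\Sigma}_{op}\ge 4\lambda/3$ requirement, yields $\beta\le1/2$ with probability $1-\delta/2$ or similar.

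\textbf{Empirical terms.} Conditional on $\omega$, the features $\phi_T$ form a finite-dimensional, bounded map with $\norm{\phi_T(x)}^2=1$. The sample error and the empirical-vs-population term are then handled exactly as in RR17, which is the standard Caponnetto--De Vito analysis. We use Bernstein's inequality for vector-valued variables with the sub-exponential moments of $Y$ to get the $(B+2R\kappa)\kappa/(\sqrt\lambda n)$ and $\sqrt{(\sigma+R\kappa)^2\cN_T(\lambda)/n}$ terms. The operator Bernstein inequality, valid under \eqref{eq-n-bound-rudi}, controls $\hat C_T$ against $C_T$. Finally we replace the random-feature effective dimension $\cN_T(\lambda)$ by at most $2\cN(\lambda)$ on the event $\beta\le1/2$, again via Lemma~\ref{lem-sigma-prop}. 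A union bound over the events, each with probability $\delta/7$-type splits, produces the $\log(14/\delta)$ and the constant 8.
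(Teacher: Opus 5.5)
Most of your plan is sound, and your grouping is slightly leaner than the paper's. Writing $g_{\lambda,T}-g_0=-\lambda\Sigma_{T,\lambda}^{-1}\Sigma^{1/2}\tilde g$ merges the computational and approximation errors into one term. That term needs only your event $\beta\le 1/2$ and gives $\sqrt{2}R\sqrt{\lambda}$, so Lemma~\ref{lem-c-bound} becomes unnecessary.

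The genuine gap is your last step: replacing $\cN_T(\lambda)$ by $2\cN(\lambda)$ ``on the event $\beta\le1/2$, again via Lemma~\ref{lem-sigma-prop}.'' That event is an operator-norm statement, $\frac12\Sigma_\lambda\preceq\Sigma_{T,\lambda}\preceq\frac32\Sigma_\lambda$. By contrast, $\cN_T(\lambda)=\Tr(\Sigma_T\Sigma_{T,\lambda}^{-1})$ is a trace, and in infinite dimensions the former does not control the latter. The event only caps $\langle e_i,\Sigma_T e_i\rangle$ by $\frac32\varsigma_i+\frac12\lambda$ along each eigenvector $e_i$ of $\Sigma$, and summing these caps over the infinitely many tail directions gives nothing. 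For example, add $\frac{\lambda}{2}P$ for a rank-$m$ projection $P$ onto tail eigenvectors, and halve the leading eigenvalues to keep the trace fixed. This stays inside your event but forces $\cN_T(\lambda)\ge m/3$, with $m$ of order $1/\lambda\gg\cN(\lambda)$. This is not academic for trees: a deep tree with many small leaves can have $\Tr(\Sigma_\lambda^{-1}\Sigma_\omega)$ of order $1/\lambda$, and the per-tree bounds in Lemma~\ref{lem-sigma-prop} do not rule this out.

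This trace-level control is where the random rank-$2^D$ structure actually requires new work. The operator Bernstein step for $\beta$ is not the obstacle; it goes through directly once $\cF_\infty(\lambda)$ replaces the rank-one supremum, as in Lemma~\ref{lem-c_lambda}.

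The paper handles the trace with a separate concentration result, Lemma~\ref{lem-eff-dim-bnd}, adapting RR17's Proposition 10. It writes $|\cN_T-\cN|\le d(\lambda)^2/(1-c(\lambda))+\lambda e(\lambda)$. The trace term is controlled by scalar Bernstein on $\eta_j=\Tr(\lambda\Sigma_\lambda^{-2}\Sigma_{\omega_j})\le\cF_\infty(\lambda)$, and the Hilbert--Schmidt term by vector Bernstein. This gives $\cN_T\le 2.55\,\cN$ on an additional event.

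A lighter repair fits your framework. On your event, $\Sigma_{T,\lambda}^{-1}\preceq 2\Sigma_\lambda^{-1}$, hence $\cN_T(\lambda)\le 2\Tr(\Sigma_\lambda^{-1}\Sigma_T)=\frac{2}{T}\sum_j\Tr(\Sigma_\lambda^{-1}\Sigma_{\omega_j})$. This is an i.i.d.\ average of variables in $[0,\cF_\infty(\lambda)]$ with mean $\cN(\lambda)$ and variance at most $\cF_\infty(\lambda)\cN(\lambda)$. Scalar Bernstein, the assumed lower bound on $T$, and $\cN(\lambda)\ge\frac12$ then give $\cN_T(\lambda)\lesssim\cN(\lambda)$, though with a constant above $2$.

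Either way, an extra event enters the union bound, so the constants $8$, $\log(14/\delta)$ and $3R\sqrt{\lambda}$ must be recomputed rather than asserted. One minor point: for $B=\Sigma_\lambda^{-1/2}\Sigma_\omega\Sigma_\lambda^{-1/2}$, the bound $\E[B^2]\preceq\cF_\infty(\lambda)\E[B]$ follows from $B^2\preceq\norm{B}_{op}B$ and $\norm{B}_{op}\le\Tr(B)$, not from sub-idempotency of $\Sigma_\omega$.
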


The proof of Theorem 6 of RR17 relies on a decomposition of the excess
risk, which we restate here using the notation and operators defined
above. The three terms below correspond to sample error, computational
error, and approximation error, respectively. Note that
\(\cS(\lambda,T,n)\), \(\cC(\lambda,T)\), and \(\beta\) are random
quantities that depend on the drawn set of random features.

\begin{theorem}[]\protect\hypertarget{thm-rr17-decomp}{}\label{thm-rr17-decomp}

For any \(\lambda > 0\) and \(T\in \N\), \[
\norm{\hat g_{\lambda,T,n} - g_0}_X \le \beta(\cS(\lambda,T,n) + \cC(\lambda,T) + R\sqrt{\lambda}),
\] where \[
\cS(\lambda,T,n) := \norm{C^{-1/2}_{T,\lambda} (\hat S^*_T \hat y - S^*_T g_0)}_X 
    + R\norm{C^{-1/2}_{T,\lambda} (C_T - \hat C_T)}_{op},
\] with \(\hat y = n^{-1/2}(Y_1,\dots,Y_n)\), \[
\cC(\lambda,T) := R\sqrt{\lambda} \norm{
    \Sigma_\lambda^{-1/2}(\Sigma-\Sigma_T)\Sigma_\lambda^{-1/2}
    }_{op}, \qand
\] \[
\beta := \max\{1, (1-\beta_1)^{-1}\}\max\{1, (1-\beta_2)^{-1/2}\}, 
\] with
\(\beta_1 := \varsigma_{\max} (C^{-1/2}_{T,\lambda} (C_T - \hat C_T))\)
and
\(\beta_2 := \varsigma_{\max} (\Sigma_\lambda^{-1/2}(\Sigma-\Sigma_T)\Sigma_\lambda^{-1/2})\),
where \(\varsigma_{\max}\) denotes the maximum eigenvalue.

\end{theorem}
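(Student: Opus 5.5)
We follow the proof of the corresponding decomposition in RR17 (their Theorem 2 / Proposition 10). We only need to check that each algebraic step uses operator identities that remain valid for rank-$2^D$ tree features. Those identities are Remark 8 and Lemma~\ref{lem-sigma-prop}. The argument is purely deterministic given the draw $\omega$ and the sample.

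First write the estimator in operator form. From Eq.~\ref{eq-rtf-ridge}, after rescaling by $\sigma_\mu^2$ and using $\Sigma_\b=\sigma_\mu^2\Sigma$, we get $\hat g_{\lambda,T,n}=S_T\hat\beta$ with $\hat\beta=\hat C_{T,\lambda}^{-1}\hat S_T^*\hat y$, where $\hat C_{T,\lambda}=\hat C_T+\lambda I$. Introduce two intermediate functions: the population random-feature solution $\tilde g_{\lambda,T}:=S_T C_{T,\lambda}^{-1}S_T^*g_0$, and the kernel ridge solution $g_\lambda:=\Sigma\Sigma_\lambda^{-1}g_0$. Then decompose
\[
\hat g_{\lambda,T,n}-g_0=(\hat g_{\lambda,T,n}-\tilde g_{\lambda,T})+(\tilde g_{\lambda,T}-g_\lambda)+(g_\lambda-g_0).
\]
The approximation term is controlled by the source condition $g_0=\Sigma^{1/2}\tilde g$: we have $\norm{g_\lambda-g_0}_X=\lambda\norm{\Sigma_\lambda^{-1}\Sigma^{1/2}\tilde g}_X\le R\sqrt\lambda$.

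For the sample term, use the identity $\hat C_{T,\lambda}^{-1}=C_{T,\lambda}^{-1/2}(I-C_{T,\lambda}^{-1/2}(C_T-\hat C_T)C_{T,\lambda}^{-1/2})^{-1}C_{T,\lambda}^{-1/2}$. Combine this with $\norm{S_TC_{T,\lambda}^{-1/2}}_{op}\le1$, which holds because $C_T=S_T^*S_T$. A Neumann-series bound then gives $\norm{S_T\hat C_{T,\lambda}^{-1}C_{T,\lambda}^{1/2}}_{op}\le(1-\beta_1)^{-1}$. Writing
\[
\hat g_{\lambda,T,n}-\tilde g_{\lambda,T}=S_T\hat C_{T,\lambda}^{-1}\bigl[(\hat S_T^*\hat y-S_T^*g_0)+(C_T-\hat C_T)C_{T,\lambda}^{-1}S_T^*g_0\bigr]
\]
produces the two pieces of $\cS$. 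For the second piece we need $\norm{C_{T,\lambda}^{-1}S_T^*g_0}\lesssim R$ up to factors absorbed in $\beta$. We obtain it via $S_T^*\Sigma^{1/2}=S_T^*\Sigma_T^{1/2}(\Sigma_T^{-1/2}\Sigma^{1/2})$ and the operator-comparison bound
\[
\norm{\Sigma_{T,\lambda}^{-1/2}\Sigma_\lambda^{1/2}}_{op}\le(1-\beta_2)^{-1/2},
\]
which follows from $\Sigma_{T,\lambda}=\Sigma_\lambda^{1/2}(I-\Sigma_\lambda^{-1/2}(\Sigma-\Sigma_T)\Sigma_\lambda^{-1/2})\Sigma_\lambda^{1/2}$.

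For the computational term, use the push-through identity $S_TC_{T,\lambda}^{-1}S_T^*=\Sigma_T\Sigma_{T,\lambda}^{-1}$, valid since $\Sigma_T=S_TS_T^*$. This gives
\[
\tilde g_{\lambda,T}-g_\lambda=\lambda(\Sigma_\lambda^{-1}-\Sigma_{T,\lambda}^{-1})g_0=\lambda\Sigma_{T,\lambda}^{-1}(\Sigma_T-\Sigma)\Sigma_\lambda^{-1}\Sigma^{1/2}\tilde g.
\]
Sandwich with $\Sigma_\lambda^{\pm1/2}$ and use $\norm{\lambda^{1/2}\Sigma_{T,\lambda}^{-1/2}}\le1$ together with the comparison bound above. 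This yields at most $(1-\beta_2)^{-1/2}\cdot R\sqrt\lambda\norm{\Sigma_\lambda^{-1/2}(\Sigma-\Sigma_T)\Sigma_\lambda^{-1/2}}_{op}=(1-\beta_2)^{-1/2}\cC(\lambda,T)$. Collecting the three terms and bounding every prefactor by $\beta$ gives the claim. If $\beta_1\ge1$ or $\beta_2\ge1$, the $\max$ convention is interpreted as $+\infty$ and the bound is trivial.

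The main obstacle is bookkeeping rather than a new idea. Every identity above needs only that $\Sigma_T=S_TS_T^*$, $C_T=S_T^*S_T$, $\hat C_T=\hat S_T^*\hat S_T$, and that all operators are positive and bounded. These are exactly Remark 8 and Lemma~\ref{lem-sigma-prop}, and they hold regardless of whether each tree contributes a rank-one or a rank-$2^D$ piece and regardless of the random dimension $M$. The point requiring the most care is the second piece of $\cS$, where we must route $g_0=\Sigma^{1/2}\tilde g$ through $S_T^*$ without assuming $g_0$ lies in the random-feature span. We would handle this exactly as RR17 do, via the $\beta_2$ comparison.
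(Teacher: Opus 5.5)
Your proposal is correct and follows the route the paper invokes. The paper cites the RR17 Theorem 4 add-and-subtract split into sample, computational and approximation errors, using only $\Sigma_T=S_TS_T^*$, $C_T=S_T^*S_T$, $\hat C_T=\hat S_T^*\hat S_T$ and positivity; you write out the algebra it leaves to RR17 (which is its Theorem 4 with Lemmas 2--5, not Theorem 2 or Proposition 10).

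One step needs repair, and one justification should be changed.

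\begin{itemize}
\item The factorization through $\Sigma_T^{-1/2}$ is invalid, because $\Sigma_T$ has finite rank and so $\Sigma_T^{-1/2}$ does not exist. Use your push-through identity instead: $C_{T,\lambda}^{-1}S_T^*g_0=(S_T^*\Sigma_{T,\lambda}^{-1/2})(\Sigma_{T,\lambda}^{-1/2}\Sigma_\lambda^{1/2})(\Sigma_\lambda^{-1/2}\Sigma^{1/2})\tilde g$. The three factors have norms at most $1$, $\max\{1,(1-\beta_2)^{-1/2}\}$ and $1$. So the second piece of $\cS$ picks up exactly the factor $\beta$, with no extra constant.
\item The bound $(1-\beta_1)^{-1}$ should not come from a Neumann series. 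It follows from $C_{T,\lambda}^{-1/2}\hat C_{T,\lambda}C_{T,\lambda}^{-1/2}\succeq(1-\beta_1)I$, where $\beta_1$ is the top eigenvalue of the symmetrized operator $C_{T,\lambda}^{-1/2}(C_T-\hat C_T)C_{T,\lambda}^{-1/2}$. That is RR17's definition; the printed statement drops the right-hand factor.
\end{itemize}
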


\begin{proof}
Theorem~\ref{thm-rr17-decomp} follows from an identical argument to the
proof of Theorem 4 in RR17, which proceeds by adding and subtracting
terms to \(\hat g_{\lambda,T,n} - g_0\), grouping them, and applying the
triangle inequality. Each new term is then bounded by Lemmas 2--5 in
RR17. These lemmas go through identically, generally because each
involves purely algebraic manipulations, and because of the properties
shown above in Lemma~\ref{lem-sigma-prop}. In RR17, Lemmas 1 and 2
(which relies on Lemma 1) involve reexpressing an integral, but this
reexpression is equally valid when the features are multivariate. The
fact that the dimension of the features \(\Psi(\vb x)\) is also random
does not affect the argument, which in RR17 is implicitly made pointwise
within a single \(\omega\).
\end{proof}

From here, we will adapt the arguments in RR17 to bound each term in
Theorem~\ref{thm-rr17-decomp}, before combining them to prove
Theorem~\ref{thm-rr17-gen}. While most of the key results from RR17 can
be generalized to our operator setting, several pieces require
adaptation. The first is the adaptive dimension concentration result
(Proposition 10), which is used to bound the sampling error \(\cS\). The
second is re-proving the bound on \(\cC\).

\subsubsection{Bernstein bound lemma}\label{bernstein-bound-lemma}

The following lemma will be repeatedly used in bounding each term in
Theorem~\ref{thm-rr17-decomp}.

\begin{lemma}[]\protect\hypertarget{lem-c_lambda}{}\label{lem-c_lambda}

Suppose \(0<\lambda\le \norm{\Sigma}_{op}\). Then for
\(\delta \in (0,1]\), with probability at least \(1-\delta\),
\begin{equation}\protect\phantomsection\label{eq-clambda-bound}{
\norm{\Sigma_\lambda^{-1/2} (\Sigma - \Sigma_T) \Sigma_\lambda^{-1/2}}_{op} \leq \frac{2 (\cF_{\infty}(\lambda) + 1)\log\left( \frac{8\kappa^2}{\lambda \delta} \right)}{3T} + \sqrt{\frac{2\cF_\infty(\lambda) \log\left( \frac{8\kappa^2}{\lambda \delta} \right)}{T}}.
}\end{equation}

\end{lemma}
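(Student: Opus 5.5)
This is the analogue of RR17 Proposition 6 / Lemma 7. I will prove it with the matrix Bernstein inequality for sums of i.i.d. self-adjoint operators, in the intrinsic-dimension version (Tropp; Minsker), exactly as RR17 do for rank-one features. The only change is that each summand now has rank \(2^{D}\) rather than rank one.

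\textbf{Setup.} Write
\[
\Sigma_\lambda^{-1/2}(\Sigma-\Sigma_T)\Sigma_\lambda^{-1/2}=\frac1T\sum_{j=1}^T Z_j,
\qquad
Z_j := \Sigma_\lambda^{-1/2}(\Sigma-\Sigma_{\omega_j})\Sigma_\lambda^{-1/2}.
\]
By property 1 of Lemma~\ref{lem-sigma-prop}, the \(Z_j\) are i.i.d., zero-mean and self-adjoint. Set
\[
W_\omega:=\Sigma_\lambda^{-1/2}\Sigma_\omega\Sigma_\lambda^{-1/2}\succeq0,
\qquad
W:=\E_\omega W_\omega=\Sigma_\lambda^{-1/2}\Sigma\Sigma_\lambda^{-1/2}.
\]

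\textbf{Uniform bound.} Since \(W_\omega\succeq0\), we have \(\norm{W_\omega}_{op}\le\Tr(W_\omega)=\Tr(\Sigma_\lambda^{-1}\Sigma_\omega)\le\cF_\infty(\lambda)\). Also \(\norm{W}_{op}\le1\). Because \(Z=W-W_\omega\) is a difference of positive operators, \(\norm{Z}_{op}\le\max\{\norm{W}_{op},\norm{W_\omega}_{op}\}\le\cF_\infty(\lambda)+1\). This gives the constant \(L=\cF_\infty(\lambda)+1\) that appears in the first term of \eqref{eq-clambda-bound}.

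\textbf{Variance.} We have \(\E Z^2=\E W_\omega^2-W^2\preceq\E W_\omega^2\). Sub-idempotency in Lemma~\ref{lem-sigma-prop} is the natural tool, but we need it after conjugation. Write \(W_\omega=AA^*\) with \(A=\Sigma_\lambda^{-1/2}\Sigma_\omega^{1/2}\). Then
\[
W_\omega^2=A(A^*A)A^*\preceq\norm{A^*A}_{op}\,AA^*\preceq\cF_\infty(\lambda)\,W_\omega,
\]
because \(\norm{A^*A}_{op}=\norm{W_\omega}_{op}\le\cF_\infty(\lambda)\). Hence
\[
\E Z^2\preceq V:=\cF_\infty(\lambda)\,W,
\]
with \(\norm{V}_{op}\le\cF_\infty(\lambda)\). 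This gives the variance proxy \(\cF_\infty(\lambda)/T\) in the square-root term.

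\textbf{Intrinsic dimension.} The dimension factor is
\[
d=\frac{\Tr V}{\norm{V}_{op}}=\frac{\cN(\lambda)}{\norm{W}_{op}},
\qquad
\norm{W}_{op}=\frac{\norm{\Sigma}_{op}}{\norm{\Sigma}_{op}+\lambda}\ge\frac12 .
\]
The last inequality uses \(\lambda\le\norm{\Sigma}_{op}\). Moreover \(\cN(\lambda)\le\Tr\Sigma/\lambda\le\kappa^2/\lambda\) by property 3. So \(d\le2\kappa^2/\lambda\), and this is how the logarithm becomes \(\log(8\kappa^2/(\lambda\delta))\).

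\textbf{Conclusion and main obstacle.} Applying the Bernstein inequality (as used in RR17, Proposition 6) with \(L\), \(\norm{V}_{op}\) and \(d\) above, together with the standard inversion of the tail bound, gives \eqref{eq-clambda-bound}. The main obstacle is that the operators act on the infinite-dimensional space \(L^2(X)\). The intrinsic-dimension Bernstein bound extends there through finite-rank (trace-class) approximation, as in Minsker's extension that RR17 invoke. I will check that the \(Z_j\) are trace class, which holds since each \(\Sigma_\omega\) has finite rank \(2^D\) and \(\Sigma\) is trace class. I also need that the random dimension \(D\) causes no measurability or uniformity problems. That holds because every bound above is uniform in \(\omega\), through \(\cF_\infty(\lambda)\).
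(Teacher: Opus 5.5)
Your proposal is correct and is essentially the paper's proof. It uses the same i.i.d.\ centered summands $\Sigma_\lambda^{-1/2}(\Sigma-\Sigma_{\omega_j})\Sigma_\lambda^{-1/2}$, the same uniform bound $\cF_\infty(\lambda)+1$ via $\norm{W_\omega}_{op}\le\Tr(\Sigma_\lambda^{-1}\Sigma_\omega)$, the same variance proxy $\cF_\infty(\lambda)W$, and the same dimension estimate $\Tr W/\norm{W}_{op}\le 2\kappa^2/\lambda$, all fed into RR17's operator Bernstein inequality (their Proposition~3) at level $\delta/2$; your $W_\omega=AA^*$ factorization is just an explicit version of the paper's spectral-calculus step $B_j^2\preceq\norm{B_j}_{op}B_j$.
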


\begin{proof}
Define \(A := \Sigma_\lambda^{-1/2} \Sigma \Sigma_\lambda^{-1/2}\) and
\(B_j := \Sigma_\lambda^{-1/2} \Sigma_{\omega_j} \Sigma_\lambda^{-1/2}\),
so that \(\E[B_j] = A\), and let \(X_j := A - B_j\). Expanding
\(\Sigma_T\) in terms of \(\Sigma_\omega\), \[
\norm{\Sigma_\lambda^{-1/2} (\Sigma - \Sigma_T) \Sigma_\lambda^{-1/2}}_{op}
= \norm{\frac{1}{T} \sum_{j=1}^T X_j}_{op}.
\] Note that \(X_j\) is an i.i.d., self-adjoint, zero-mean random
operator. Then, for all \(j\), \[
\norm{X_j}_{op} \leq \norm{A}_{op} + \norm{B_j}_{op} \leq \cF_\infty(\lambda) + 1,
\] where the inequality follows from the fact that
\(\norm{B_j}_{op}=\norm{\Sigma_\lambda^{-1/2} \Sigma_{\omega_j} \Sigma_\lambda^{-1/2}}_{op}\leq  \Tr(\Sigma_\lambda^{-1} \Sigma_{\omega_j}) \leq \cF_\infty(\lambda)\)
by the cyclic property of the trace and the definition of
\(\cF_\infty(\lambda)\), and
\(\norm{A}_{op}=\norm{\Sigma_\lambda^{-1/2} \Sigma \Sigma_\lambda^{-1/2}}_{op} \le 1\)
since \(\Sigma_\lambda=\Sigma + \lambda I\).

The second moment of \(X_j\) is also bounded by \[
\begin{aligned} 
\E_\omega[X_j^2] &= A^2 - A\E_\omega[B_j] - \E_\omega[B_j]A + \E_\omega[B_j^2] \\
&= \E_\omega[B_j^2] - A^2 \\
&\le \E_\omega[B_j^2] \\
&\leq \E_\omega[\cF_\infty (\lambda) B_j] = \cF_\infty(\lambda) A,
\end{aligned} 
\] where the second to last line follows because \(A\) is positive
semidefinite, and the last line follows from spectral calculus: if
\(\norm{A}_{op}\leq c\), then \(A^2 \leq c A\) in the Loewner order,
which is preserved by expectations.

As such, we have shown that the second moment of \(X_j\) is bounded, and
the operator norm is also bounded. We can then apply a two-sided
Bernstein bound for operators, which is RR17, Proposition 3 using
\(\delta_0=\delta/2\). Substituting into that bound yields \[
\norm{\frac{1}{T}\sum_{j=1}^T X_j}_{op}
\leq \frac{2(\cF_\infty(\lambda)+1)\beta}{3T}
+ \sqrt{\frac{2\cF_\infty(\lambda)\beta}{T}},
\] with probability at least \(1-\delta\), where
\(\beta := \log\left(\frac{2\Tr(A)}{\norm{A}_{op}\delta_0}\right)\).
Since \(\lambda\leq\norm{\Sigma}_{op}\), \[
\norm{A}_{op} = \frac{\norm{\Sigma}_{op}}{\norm{\Sigma}_{op}+\lambda}
\geq \frac{1}{2}
\qand
\Tr(A) \le \frac{\Tr(\Sigma)}{\lambda}
\le \frac{\kappa^2}{\lambda}.
\] Therefore, \[
\beta
= \log\left(\frac{2\Tr(A)}{\norm{A}_{op}\delta_0}\right)
\leq \log\left(\frac{4\kappa^2}{\lambda\delta_0}\right)
= \log\left(\frac{8\kappa^2}{\lambda\delta}\right),
\] and substituting this upper bound for \(\beta\) gives the result.
\end{proof}

\subsubsection{Concentration of effective
dimension}\label{concentration-of-effective-dimension}

We will next adapt the effective dimension concentration result from
RR17 (Proposition 10). To start, define
\(\cN_T(\lambda) := \Tr(\Sigma_T \Sigma_{T,\lambda}^{-1})\) as the
empirical effective dimension, computed using only the finite subsample
of \(T\).

\begin{lemma}[]\protect\hypertarget{lem-eff-dim-bnd}{}\label{lem-eff-dim-bnd}

For \(\delta\in(0,1]\), \(\lambda\le\norm{\Sigma}_{op}\), and
\(T\ge \left(4+18\cF_\infty(\lambda)\right)\log\left(\frac{12\kappa^2}{\lambda\delta}\right)\),
with probability at least \(1-\delta\), \[
|\cN_T(\lambda) - \cN(\lambda)| \le 1.55 \cN(\lambda).
\]

\end{lemma}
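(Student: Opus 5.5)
The plan is to follow the structure of RR17's Proposition 10. I will rewrite \(\cN_T(\lambda)\) in terms of the normalized perturbation
\[
E := \Sigma_\lambda^{-1/2}(\Sigma-\Sigma_T)\Sigma_\lambda^{-1/2},
\qquad
A := \Sigma_\lambda^{-1/2}\Sigma\Sigma_\lambda^{-1/2}.
\]
These are the operators that already appeared in Lemma~\ref{lem-c_lambda}. Since \(\Sigma_T = \Sigma_\lambda^{1/2}(A-E)\Sigma_\lambda^{1/2}\) and \(\Sigma_{T,\lambda} = \Sigma_\lambda^{1/2}(I-E)\Sigma_\lambda^{1/2}\), cyclicity of the trace gives
\[
\cN_T(\lambda) = \Tr\bigl((A-E)(I-E)^{-1}\bigr),
\qquad
\cN(\lambda)=\Tr(A).
\]
This holds whenever \(\norm{E}_{op}<1\). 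A short algebraic manipulation then yields
\[
\cN_T(\lambda)-\cN(\lambda) = -\Tr\bigl((I-A)E(I-E)^{-1}\bigr).
\]
Note that \(I-A=\lambda\Sigma_\lambda^{-1}\) is positive with norm at most one. Expanding \((I-E)^{-1} = I + E(I-E)^{-1}\) splits the difference into a linear term \(\Tr((I-A)E)\) and a quadratic remainder \(\Tr((I-A)E^2(I-E)^{-1})\).

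For the linear term, I will observe that it is a scalar average of i.i.d. zero-mean variables. It equals \(T^{-1}\sum_j(\mu - Z_j)\), where \(Z_j := \lambda\Tr(\Sigma_\lambda^{-2}\Sigma_{\omega_j})\) and \(\mu := \E Z_j = \lambda\Tr(\Sigma_\lambda^{-2}\Sigma)\le \cN(\lambda)\). Because \(\lambda\Sigma_\lambda^{-1}\preceq I\), we have \(0\le Z_j\le \Tr(\Sigma_\lambda^{-1}\Sigma_{\omega_j})\le\cF_\infty(\lambda)\). Hence \(\mathrm{Var}(Z_j)\le \cF_\infty(\lambda)\,\mu\le\cF_\infty(\lambda)\cN(\lambda)\). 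The scalar Bernstein inequality then bounds the linear term by
\[
\sqrt{2\cF_\infty\cN\log(\cdot)/T} + 2\cF_\infty\log(\cdot)/(3T).
\]
Under the stated lower bound \(T\gtrsim 18\cF_\infty\log(\cdot)\), this is a small constant times \(\cN(\lambda)\). I will use the elementary inequality \(\cN(\lambda)\ge \norm{A}_{op}\ge 1/2\), valid since \(\lambda\le\norm{\Sigma}_{op}\), to absorb the additive term into \(\cN\).

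For the quadratic remainder, I will first apply Lemma~\ref{lem-c_lambda} at confidence \(\delta/3\). Under the assumed size of \(T\), this gives \(\norm{E}_{op}\le 1/2\) and hence \(\norm{(I-E)^{-1}}_{op}\le 2\). The remainder is then bounded by \(2\Tr(E^2)=2\norm{E}_{HS}^2\), using positivity of \(E^2\) and \(\norm{I-A}_{op}\le1\). To control \(\norm{E}_{HS}^2\), I will treat \(E\) as an average of i.i.d. zero-mean Hilbert–Schmidt-valued random elements \(X_j = A-B_j\). Their second moment satisfies
\[
\E\norm{X_j}_{HS}^2\le \E\Tr(B_j^2)\le \cF_\infty(\lambda)\,\E\Tr(B_j)=\cF_\infty(\lambda)\cN(\lambda).
\]
They are also bounded by \(\cF_\infty+1\) in HS norm. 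A Hilbert-space Bernstein (Pinelis) bound then gives \(\norm{E}_{HS}^2\lesssim \cF_\infty\cN\log(\cdot)/T\), again a small multiple of \(\cN(\lambda)\) for the stated \(T\). A union bound over the three events, each at level \(\delta/3\), produces the \(\log(12\kappa^2/(\lambda\delta))\) factor.

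I expect the main obstacle to be the quadratic remainder, and in particular matching the explicit constant \(1.55\). Bounding \(\Tr(E^2)\) rather than using only \(\norm{E}_{op}\) is essential, since \(\Tr(I-A)\) is unbounded. The constants coming from Pinelis' inequality together with the requirement \(T\ge(4+18\cF_\infty)\log(\cdot)\) must combine to yield a total of at most \(1.55\). If the HS concentration gives constants that are too loose, my fallback is to imitate RR17 more closely. That means bounding the remainder by \(\norm{E}_{op}(1-\norm{E}_{op})^{-1}\,|\Tr((I-A)E)|\) plus a term controlled by the same scalar Bernstein variables, and then tuning the split of \(\delta\).
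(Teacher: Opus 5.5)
Your proposal is correct and is essentially the paper's proof. Your identity $\cN_T(\lambda)-\cN(\lambda)=-\Tr((I-A)E)-\Tr((I-A)E^2(I-E)^{-1})$ is the RR17 Proposition 7 bound $|\cN_T-\cN|\le d(\lambda)^2/(1-c(\lambda))+\lambda|e(\lambda)|$ (with $d=\norm{E}_{HS}$, $c=\varsigma_{\max}(E)$) that the paper invokes, and the paper controls the three pieces just as you plan: scalar Bernstein for $\lambda\Tr(\Sigma_\lambda^{-2}\Sigma_{\omega_j})$, Hilbert-space Bernstein for $\norm{E}_{HS}$, and operator Bernstein for the top eigenvalue, each at level $\delta/3$, followed by $\cN(\lambda)\ge 1/2$.

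There is one slip. $\norm{A-B_j}_{HS}$ is bounded by $2\cF_\infty(\lambda)$ (via $\norm{A}_{HS}\le\E\norm{B_j}_{HS}$), not by $\cF_\infty(\lambda)+1$, because $\norm{A}_{HS}$ itself grows like $\sqrt{\cN(\lambda)}$. With the corrected bound, the variance proxy $\cF_\infty(\lambda)\cN(\lambda)$ you derived, and your factor $2$ from $\norm{E}_{op}\le 1/2$, the constants total about $1.4$, so no fallback is needed. The paper instead uses the one-sided bound $c(\lambda)\le 1/3$, which suffices because $I-E\succ 0$ and matches the $\log(12\kappa^2/(\lambda\delta))$ factor exactly.
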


\begin{proof}
The proof structure follows almost identically to RR17's original
derivation, with the key extension being that we must show that the
operator analogue to the original objects are also well-behaved (i.e.,
finite mean/variance and boundedness), allowing us to apply Bernstein's
inequality. To start, following RR17 Proposition 7, which is a purely
operator-theoretic result, we write the difference between
\(\cN_T(\lambda)\) and \(\cN(\lambda)\) as \[
|\cN_T(\lambda)  - \cN(\lambda)| \leq \frac{d(\lambda)^2}{1-c(\lambda)} + \lambda e(\lambda),
\] where we define
\(\tilde B := \Sigma_{\lambda}^{-1/2} (\Sigma - \Sigma_{T}) \Sigma_{\lambda}^{-1/2}\),
and \(c(\lambda) = \varsigma_{\max}(\tilde B)\),
\(d(\lambda) = \norm{\tilde B}_{HS}\), and
\(e(\lambda) = \Tr(\Sigma_\lambda^{-1/2} \tilde B \Sigma_{\lambda}^{-1/2})\).
We will now bound each piece: \(c(\lambda)\), \(\lambda e(\lambda)\),
and \(d(\lambda)\).

\emph{Bounding \(c(\lambda)\).} Although Lemma~\ref{lem-c_lambda} gives
a two-sided norm bound, here we need only to bound the largest
eigenvalue. Applying the one-sided Bernstein argument from its proof, we
find with probability at least \(1 - \delta/3\) that \[
c(\lambda) \leq \frac{2\log\left( \frac{12\kappa^2}{\lambda \delta} \right)}{3T} + \sqrt{\frac{2\cF_\infty(\lambda) \log\left( \frac{12\kappa^2}{\lambda \delta} \right)}{T}}.
\]

\emph{Bounding \(\lambda e(\lambda)\).} To bound \(\lambda e(\lambda)\),
we first re-write \(e(\lambda)\) as \[
\begin{aligned} 
e(\lambda) &= \Tr(\Sigma_\lambda^{-1/2} \tilde B \Sigma_\lambda^{-1/2})\\
&= \Tr(\Sigma_\lambda^{-1/2} \Sigma_\lambda^{-1/2} (\Sigma - \Sigma_T) \Sigma_\lambda^{-1/2} \Sigma_\lambda^{-1/2})\\
&= \Tr(\Sigma_\lambda^{-1} (\Sigma - \Sigma_T) \Sigma_\lambda^{-1})\\
&= \Tr(\Sigma_\lambda^{-2} \Sigma) - \Tr(\Sigma_\lambda^{-2} \Sigma_T)\\
&= \Tr(\Sigma_\lambda^{-2} \Sigma) -\frac{1}{T}\sum_{j=1}^T \Tr(\Sigma_\lambda^{-2} \Sigma_{\omega_j})
\end{aligned} 
\] Define
\(\eta_j = \Tr(\lambda \Sigma_\lambda^{-2} \Sigma_{\omega_j})\), which
is i.i.d., as the random trees are i.i.d. The random variable \(\eta_j\)
is bounded, with finite mean and variance. In particular, boundedness
follows from \[
\begin{aligned} 
\lvert \eta_j \rvert &= \Tr(\lambda \Sigma_\lambda^{-2} \Sigma_{\omega_j}) \\
&=^{(a)} \lambda  \Tr(\Sigma_\lambda^{-1} \Sigma_{\lambda}^{-1} \Sigma_{\omega_j})\\
&\leq^{(b)} \lambda \norm{\Sigma_\lambda^{-1}}_{op} \cdot \Tr(\Sigma_\lambda^{-1} \Sigma_{\omega_j}) \\
&\leq^{(c)} \cF_\infty(\lambda).
\end{aligned}  
\] Step (a) follows from \(\Sigma_\lambda^{-1}\) being self-adjoint, (b)
follows from positivity of \(\Sigma_\lambda^{-1}\) and the operator
inequality in the Loewner order
\(\Sigma_\lambda^{-1} \preceq \norm{\Sigma_\lambda^{-1}}_{op}I\), and
(c) follows from \(\lambda \norm{\Sigma_\lambda^{-1}}_{op} \leq 1\).

Furthermore, by linearity of the trace operator, \[
\begin{aligned} 
\E_\omega[\eta_j] &= \E_\omega[\Tr(\lambda \Sigma_\lambda^{-2} \Sigma_{\omega_j}) ] \\
&= \lambda \Tr(\Sigma_\lambda^{-2}\E_\omega[\Sigma_{\omega_j}]) \\
&= \lambda \Tr(\Sigma_\lambda^{-2}\Sigma) \\
&\leq \lambda \norm{\Sigma_\lambda^{-1}}_{op} \Tr(\Sigma_\lambda^{-1} \Sigma)\\
&= \lambda \norm{\Sigma_\lambda^{-1}}_{op} \cN(\lambda) \\
&\leq \cN(\lambda).
\end{aligned} 
\] This implies \[
|\eta_j - \E[\eta_j]| \leq \cF_\infty(\lambda) + \cN(\lambda) \leq 2\cF_\infty(\lambda).
\] The uncentered second moment (and thus the centered moment too) can
then be directly bounded as \[
\E[(\eta_j - \E[\eta_j])^2] \le \E_\omega[\eta_j^2] 
\leq \left( \sup |\eta_j| \right) \E_\omega(\eta_j) \leq \cF_\infty(\lambda) \cN(\lambda).
\]

Then, \(\lambda e(\lambda)\) can be written as deviations of the sample
mean of random variable \(\eta_j\), \[
\begin{aligned} 
\lambda e(\lambda) &= \lambda \Tr(\Sigma_\lambda^{-2} \Sigma) - \lambda \cdot \frac{1}{T} \sum_{j=1}^T \Tr(\Sigma_\lambda^{-2} \Sigma_{\omega_j})\\
&= \E_\omega[\eta_j] - \frac{1}{T} \sum_{j=1}^T \eta_j,
\end{aligned} 
\] and we can apply Bernstein's inequality for sum of random variables
(RR17, Proposition 1) to bound \(\lambda e(\lambda)\). More
specifically, with probability at least \(1-\delta/3\),
\[\lambda |e(\lambda)| \leq \frac{4 \cF_\infty(\lambda) \log(6/\delta)}{3T} + \sqrt{\frac{2 \cF_\infty(\lambda) \cN(\lambda) \log(6/\delta)}{T}}.\]

\emph{Bounding \(d(\lambda)\).} Finally, we bound \(d(\lambda)\). Let
\(A_j := \Sigma_\lambda^{-1/2} \Sigma_{\omega_j} \Sigma_\lambda^{-1/2}\).
Since \(A_j\) is positive semidefinite and
\(\norm{A_j}_{op}\le \Tr(A_j)\le F_\infty(\lambda)\), we have
\(A_j^2\le \norm{A_j}_{op} A_j \leq \cF_\infty(\lambda) A_j\) in the
Loewner order. This means \[
\norm{A_j}_{HS}^2 \le \cF_\infty(\lambda)\Tr(A_j) \le \cF_\infty(\lambda)^2.
\]

Furthermore, this quantity has finite variance: \[
\begin{aligned}
\E_\omega\left[  \norm{A_j}_{HS}^2 \right]&\leq \cF_\infty(\lambda) \E_\omega \left[  \Tr(\Sigma_\lambda^{-1/2} \Sigma_{\omega_j} \Sigma_\lambda^{-1/2}) \right] \\
&= \cF_\infty(\lambda) \cN(\lambda).
\end{aligned} 
\] As above, this implies bounds on the centered terms: \[
\norm{A_j-\E[A_j]}_{HS} \leq 2\cF_\infty(\lambda),
\qand
\E_\omega\left[\norm{A_j-\E[A_j]}_{HS}^2\right] \leq \cF_\infty(\lambda) \cN(\lambda).
\] For \(p\geq2\), the preceding bounds imply \[
\E\left[\norm{A_j-\E[A_j]}_{HS}^p\right]
\leq \left(2\cF_\infty(\lambda)\right)^{p-2}
\cF_\infty(\lambda)\cN(\lambda)
\leq \frac{1}{2}p!\left(2\cF_\infty(\lambda)\cN(\lambda)\right)
\left(2\cF_\infty(\lambda)\right)^{p-2}.
\] Thus the moment condition of RR17 Proposition 2 holds with
\(\sigma^2=2\cF_\infty(\lambda)\cN(\lambda)\) and
\(M=2\cF_\infty(\lambda)\).

Then, applying Bernstein's inequality for sum of random vectors (i.e.,
RR17's Proposition 2), with probability at least \(1-\delta/3\), \[
d(\lambda) \leq \frac{4 \cF_\infty(\lambda) \log(6/\delta)}{T} 
+ \sqrt{\frac{4 \cF_\infty(\lambda) \cN(\lambda) \log(6/\delta)}{T}}.
\]

Now let \(b := \log\left(\frac{12\kappa^2}{\lambda\delta}\right)\) and
\(q := \frac{4\cF_\infty(\lambda)b}{3T}\). Since
\(\lambda\leq\norm{\Sigma}_{op}\leq\kappa^2\), we have
\(\log(6/\delta)\leq b\). Moreover, the assumed lower bound on \(T\)
gives \(q \leq \frac{2}{27}\) and \(c(\lambda) \leq \frac{1}{3}\).
Finally, \[
\cN(\lambda) \geq \norm{\Sigma_\lambda^{-1}\Sigma}_{op}
= \frac{\norm{\Sigma}_{op}}{\norm{\Sigma}_{op}+\lambda}
\geq \frac{1}{2}.
\] On the intersection of the three concentration events, which has
probability at least \(1-\delta\) by a union bound, the preceding bound
on \(\cN(\lambda)\) implies \[
\lambda|e(\lambda)| \leq q+\sqrt{\frac{3q\cN(\lambda)}{2}},
\qquad
d(\lambda) \leq 3q+\sqrt{3q\cN(\lambda)}.
\] Since \(e(\lambda)\leq|e(\lambda)|\), substituting these inequalities
yields \[
\frac{|\cN_T(\lambda)-\cN(\lambda)|}{\cN(\lambda)}
\leq
\frac{q}{\cN(\lambda)}
+ \sqrt{\frac{3q}{2\cN(\lambda)}}
+ \frac{\left(3q/\sqrt{\cN(\lambda)}+\sqrt{3q}\right)^2}{1-c(\lambda)}
< 1.55,
\] as claimed.
\end{proof}

\subsubsection{\texorpdfstring{Bounding
\(\cS(\lambda, T, n)\)}{Bounding \textbackslash cS(\textbackslash lambda, T, n)}}\label{bounding-cslambda-t-n}

Lemma 7 in RR17, which relies on their Lemma 6, establishes the
following result once recast in our notation, and once an algebra error
in the definition of their \(\bar\sigma\) is corrected.

\begin{lemma}[Corrected RR17 Lemma
7]\protect\hypertarget{lem-s-bound}{}\label{lem-s-bound}

Let \(\delta\in(0,1/3]\) and \(n\in\N\); then if
\(T\ge (4 + 18\cF_\infty(\lambda))\log(\frac{12\kappa^2}{\lambda\delta})\),
with (\(\omega\)-) probability at least \(1-3\delta\), \[
\cS(\lambda,T,n) \le 4\left(
    \frac{(B+2R\kappa)\kappa}{n\sqrt{\lambda}} 
    + \sqrt{\frac{(\sigma + R\kappa)^2\cN(\lambda)}{n}}
\right)\log\frac{2}{\delta}.
\]

\end{lemma}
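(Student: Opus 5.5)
We bound the two summands of \(\cS(\lambda,T,n)\) separately, following the architecture of RR17 Lemmas 6--7. Throughout we condition on the tree draw \(\omega\), so that \(\phi_T\) is a fixed (random-dimensional) feature map with \(\norm{\phi_T(\bx)}_2^2 = T^{-1}\sum_j \norm{\Psi_j(\bx)}_2^2 = 1 = \kappa^2\). Each tree contributes exactly one nonzero leaf indicator. This uniform bound is the only pointwise property of the features RR17 uses, so multivariate and discontinuous features cause no difficulty.

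\emph{First term.} Write \(\hat S_T^*\hat y - S_T^* g_0 = n^{-1}\sum_{i}\zeta_i\) with \(\zeta_i := Y_i\phi_T(X_i) - \E[Y\phi_T(X)]\), which are i.i.d.\ mean-zero vectors in \(\R^M\). Set \(v_i := C_{T,\lambda}^{-1/2}\zeta_i\). Conditioning on \(X\) and using the sub-exponential moment condition gives
\[
\E\norm{C_{T,\lambda}^{-1/2}\phi_T(X)Y}^c \le \tfrac12 c!\,\sigma^2 B^{c-2}\,\sup_{\bx}\norm{C_{T,\lambda}^{-1/2}\phi_T(\bx)}^{c-2}\,\E\norm{C_{T,\lambda}^{-1/2}\phi_T(X)}^2 .
\]
Here \(\sup_{\bx}\norm{C_{T,\lambda}^{-1/2}\phi_T(\bx)}\le\kappa/\sqrt\lambda\), and \(\E\norm{C_{T,\lambda}^{-1/2}\phi_T(X)}^2=\Tr(C_{T,\lambda}^{-1}C_T)=\cN_T(\lambda)\). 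Vector Bernstein (RR17 Proposition 2) then yields, with probability \(1-\delta\),
\[
\frac{2B\kappa}{n\sqrt\lambda}\log\frac2\delta+\sqrt{\frac{2\sigma^2\cN_T(\lambda)\log(2/\delta)}{n}} .
\]
To handle the part of \(Y\) that is not noise, we substitute \(g_0=\Sigma^{1/2}\tilde g\) as RR17 do, writing \(g_0\) through the features with norm controlled by \(R\). That substitution contributes the \(R\kappa\) terms, i.e.\ \(B\to B+2R\kappa\) and \(\sigma\to\sigma+R\kappa\). This is precisely where the corrected \(\bar\sigma\) enters, so we will redo that algebra carefully rather than copy it.

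\emph{Second term.} For \(R\norm{C_{T,\lambda}^{-1/2}(C_T-\hat C_T)}_{op}\), we apply operator Bernstein (RR17 Proposition 3) to the i.i.d.\ rank-one operators \(C_{T,\lambda}^{-1/2}\phi_T(X_i)\otimes\phi_T(X_i)\). These have norm at most \(\kappa^2/\sqrt\lambda\), and their second moment is bounded by \(\kappa^2 C_{T,\lambda}^{-1/2}C_TC_{T,\lambda}^{-1/2}\), whose trace is at most \(\kappa^2\cN_T(\lambda)\). This gives
\[
\frac{c\,\kappa^2}{n\sqrt\lambda}\log\frac2\delta+\sqrt{\frac{\kappa^2\cN_T(\lambda)}{n}\log\frac2\delta} .
\]
Multiplying by \(R\), this is absorbed into the stated form. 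Here we will use the Hilbert–Schmidt vector version, since a pure operator bound would pick up an unwanted log of the trace.

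\emph{Replacing \(\cN_T\) by \(\cN\).} Lemma~\ref{lem-eff-dim-bnd}, under exactly the assumed lower bound on \(T\), gives with \(\omega\)-probability \(1-\delta\) that \(\cN_T(\lambda)\le 2.55\,\cN(\lambda)\). Since \(2.55\cdot 2\le 16\), the square-root terms fit inside the constant \(4\). A union bound over the three events (two Bernstein events and the effective-dimension event) gives probability \(1-3\delta\), and the constants are collected into the factor \(4\log(2/\delta)\).

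\emph{Main obstacle.} The main obstacle is the bookkeeping in the first term: correctly bounding the higher moments of \(\norm{v_i}\) when \(Y\) is replaced by \(Y-g_0(X)\) plus the \(g_0\) part, which is where RR17's \(\bar\sigma\) error arose. The effective-dimension swap is the only place specific to random trees, and it is already supplied by Lemma~\ref{lem-eff-dim-bnd}.
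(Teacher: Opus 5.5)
Your overall route matches the paper's, which reruns RR17's Lemmas 6--7 conditionally on \(\omega\). That means a vector Bernstein bound for \(C_{T,\lambda}^{-1/2}(\hat S_T^*\hat y-S_T^*g_0)\) and a Hilbert--Schmidt Bernstein bound for \(C_{T,\lambda}^{-1/2}(C_T-\hat C_T)\), both in terms of \(\cN_T(\lambda)\). Then Lemma~\ref{lem-eff-dim-bnd} gives \(\cN_T(\lambda)\le 2.55\,\cN(\lambda)\), and a union bound over the three events finishes. Your bounds on the two summands and the effective-dimension swap are correct, and they prove the lemma without anything else.

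\emph{Delete the step where you ``substitute \(g_0=\Sigma^{1/2}\tilde g\)'' in the first summand, together with the ``main obstacle'' built on it.} That step is unnecessary, and it is also not available.

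It is unnecessary because the moment condition is imposed on \(Y\) itself, not on \(Y-g_0(X)\). Moreover, \(\E[Y\phi_T(X)]=\E[g_0(X)\phi_T(X)]=S_T^*g_0\) exactly. So your \(\zeta_i\) are already centered, and the Bernstein bound you wrote is the complete bound for that summand, with no \(R\) in it.

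It is not available because \(g_0\) is generally not in the span of the \(T\) piecewise-constant tree features. The representation \(\Sigma^{1/2}\tilde g\) goes through the limiting operator, and the mismatch between \(\Sigma\) and \(\Sigma_T\) is what \(\cC(\lambda,T)\) accounts for, not \(\cS\).

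The \(R\kappa\) shifts in \(B+2R\kappa\) and \(\sigma+R\kappa\) come entirely from the second summand. Its Bernstein bound is of order \(\frac{\kappa^2}{n\sqrt\lambda}\log\frac2\delta+\kappa\sqrt{\cN_T(\lambda)\log(2/\delta)/n}\). Multiplying it by \(R\) and adding it to the first summand's bound is exactly what produces those shifts. That addition, not a splitting of \(Y\), is where the corrected \(\bar\sigma\) enters. So the second term is not something to be ``absorbed''; it is the source of the \(R\)-dependence. Keeping your substitution as well would count that contribution twice.
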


\begin{proof}
The proof proceeds identically to the proof of Lemma 7, with the
operators implicitly defined conditional on \(\omega\), and with the
updated Lemma~\ref{lem-eff-dim-bnd}. A few small errors in the original
proof (incorrectly substituting \(\sigma+2\sqrt{R}\kappa\) and applying
the \(\cN_T(\lambda)\) bound incorrectly) do not affect the logical flow
of the argument.
\end{proof}

\subsubsection{\texorpdfstring{Bounding
\(\cC(\lambda, T)\)}{Bounding \textbackslash cC(\textbackslash lambda, T)}}\label{bounding-cclambda-t}

We will now bound \(\cC(\lambda, T)\). Usefully,
\(\cC(\lambda, T)=R\sqrt{\lambda}\norm{\Sigma_\lambda^{-1/2}(\Sigma - \Sigma_T) \Sigma_\lambda^{-1/2}}_{op}\),
and we can directly apply Lemma~\ref{lem-c_lambda} to recover an
identical probability bound as in RR17, Lemma 8 for the special case of
\(r=1/2\), as follows.

\begin{lemma}[]\protect\hypertarget{lem-c-bound}{}\label{lem-c-bound}

Under the assumptions of Lemma~\ref{lem-c_lambda}, with probability at
least \(1-\delta\), \[
\cC(\lambda, T) \leq R\sqrt{\lambda}\left(
\frac{2 (\cF_{\infty}(\lambda) + 1)\log\left( \frac{8\kappa^2}{\lambda \delta} \right)}{3T} 
+ \sqrt{\frac{2\cF_\infty(\lambda) \log\left( \frac{8\kappa^2}{\lambda \delta} \right)}{T}}\right).
\]

\end{lemma}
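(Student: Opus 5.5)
The plan is to reduce Lemma~\ref{lem-c-bound} directly to Lemma~\ref{lem-c_lambda}. By the definition in Theorem~\ref{thm-rr17-decomp},
\[
\cC(\lambda,T) = R\sqrt{\lambda}\,\norm{\Sigma_\lambda^{-1/2}(\Sigma-\Sigma_T)\Sigma_\lambda^{-1/2}}_{op},
\]
so $\cC(\lambda,T)$ is a deterministic, nonnegative multiple of the random operator norm controlled in Eq.~\ref{eq-clambda-bound}. The factor $R\sqrt{\lambda}$ does not depend on the random trees $\omega_1,\dots,\omega_T$: $R=\norm{\tilde g}_X$ is fixed by the source condition and $\lambda$ is the chosen penalty.

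The steps are as follows. First, verify that the hypotheses of Lemma~\ref{lem-c_lambda} hold; this is immediate, since we assume them, in particular $0<\lambda\le\norm{\Sigma}_{op}$ and $\delta\in(0,1]$. Second, let $E$ be the event of probability at least $1-\delta$ on which Eq.~\ref{eq-clambda-bound} holds. On $E$, multiply both sides of that inequality by $R\sqrt{\lambda}\ge 0$, which preserves the inequality. Third, substitute the definition of $\cC(\lambda,T)$ on the left-hand side to obtain exactly the displayed bound on the same event $E$. The probability guarantee therefore transfers unchanged, and no union bound is needed.

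There is no real obstacle here; all the work is in Lemma~\ref{lem-c_lambda}. The one point to check is that the operator norm in the definition of $\cC$ matches the two-sided norm bounded in Lemma~\ref{lem-c_lambda}, not the one-sided maximum eigenvalue $\beta_2$. It does, because $\cC$ is defined with $\norm{\cdot}_{op}$. This coincides with RR17 Lemma 8 at $r=1/2$.
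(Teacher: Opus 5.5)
Your proposal is correct and matches the paper's argument: the paper also notes that $\cC(\lambda,T)$ is just $R\sqrt{\lambda}$ times the operator norm controlled in Lemma~\ref{lem-c_lambda}, and obtains the bound by applying that lemma directly on the same probability-$(1-\delta)$ event. Your remark that the two-sided operator norm, rather than the one-sided $\beta_2$, is the right quantity is consistent with how $\cC$ is defined.
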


When
\(T \geq (4+ 18 \cF_\infty(\lambda))\log\left(\frac{8\kappa^2}{\lambda\delta}\right)\),
we can simplify the upper bound to \[
\cC(\lambda, T) \leq R\sqrt{\lambda}\sqrt{\frac{4\cF_\infty(\lambda) \log\left( \frac{8\kappa^2}{\lambda \delta} \right)}{T}} = 2R \sqrt{\frac{\lambda \cF_\infty(\lambda) \log\left( \frac{8\kappa^2}{\lambda \delta} \right)}{T}}.
\]

\subsubsection{\texorpdfstring{Stability
\(\beta\)}{Stability \textbackslash beta}}\label{stability-beta}

\begin{lemma}[]\protect\hypertarget{lem-beta}{}\label{lem-beta}

Let \(\delta \in (0,1/3]\). Then, with probability at least
\(1-3\delta\), \(\beta < 2\) when
\(0 < \lambda \leq \frac{3}{4} \norm{\Sigma}_{op}\), and \[
n \geq 18 \left( 2 + \frac{\kappa^2}{\lambda} \right)\log \left( \frac{4 \kappa^2}{\lambda \delta}\right),
\]

\[
T \geq 18(2 + \cF_\infty(\lambda)) \log \left( \frac{4 \kappa^2}{\lambda \delta}\right) \vee 32 \left(\frac{\kappa^2}{\norm{\Sigma}_{op}} + \kappa^2 \right) \log (2/\delta)
\]

\end{lemma}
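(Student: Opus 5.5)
We need to show $\beta<2$, i.e.\ bound $\beta_1$ and $\beta_2$ so that $\max\{1,(1-\beta_1)^{-1}\}\max\{1,(1-\beta_2)^{-1/2}\}<2$. The plan is to aim for $\beta_2\le 1/3$ and $\beta_1\le 1/3$ on a high-probability event. That gives $(1-\beta_1)^{-1}\le 3/2$ and $(1-\beta_2)^{-1/2}\le\sqrt{3/2}$, and the product is $\approx 1.84<2$. Three events, each of probability at least $1-\delta$, are combined by a union bound, which gives the stated $1-3\delta$.

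\emph{Step 1 ($\beta_2$).} Since $\beta_2\le\norm{\Sigma_\lambda^{-1/2}(\Sigma-\Sigma_T)\Sigma_\lambda^{-1/2}}_{op}$, we apply the one-sided Bernstein argument from the proof of Lemma~\ref{lem-c_lambda}. With probability $1-\delta$ this gives
\[
\beta_2\le \frac{2(\cF_\infty(\lambda)+1)b}{3T}+\sqrt{\frac{2\cF_\infty(\lambda)b}{T}},\qquad b=\log\frac{4\kappa^2}{\lambda\delta}.
\]
Here $\lambda\le\frac34\norm{\Sigma}_{op}$ makes $\norm{A}_{op}\ge 4/7$, which keeps the logarithm at this constant. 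Plugging in $T\ge 18(2+\cF_\infty(\lambda))b$ makes the first term at most $1/27$ and the second at most $1/3$. The sum slightly exceeds $1/3$, so I would either tighten the constants or accept $\beta_2\le 0.37$. The product bound $1.5/\sqrt{0.63}\approx1.89$ is still below $2$.

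\emph{Step 2 ($\beta_1$).} This is the main obstacle, because $C_{T,\lambda}$ and $C_T-\hat C_T$ live on $\R^M$ and depend on $\omega$. Conditioning on $\omega$, write
\[
C_T-\hat C_T=\frac1n\sum_i\bigl(\E_X[\phi_T\phi_T^\top]-\phi_T(X_i)\phi_T(X_i)^\top\bigr),
\]
and apply the operator Bernstein inequality (RR17 Proposition 3) to $C_{T,\lambda}^{-1/2}(C_T-\hat C_T)C_{T,\lambda}^{-1/2}$. The key quantities are the following.
\begin{itemize}
\item The uniform bound is $\sup_x\phi_T(x)^\top C_{T,\lambda}^{-1}\phi_T(x)\le\kappa^2/\lambda$, because $\norm{\phi_T(x)}^2=1$: each tree contributes exactly one leaf, scaled by $T^{-1}$.
\item The variance proxy is bounded by $(\kappa^2/\lambda)C_{T,\lambda}^{-1/2}C_TC_{T,\lambda}^{-1/2}$.
\item The intrinsic dimension is $\Tr(C_T)/\norm{C_T}_{op}$, where $\norm{C_T}_{op}=\norm{\Sigma_T}_{op}$.
\end{itemize}
We must lower bound $\norm{\Sigma_T}_{op}$ by roughly $\norm{\Sigma}_{op}/2$. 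This is where the second $T$ condition, $T\ge32(\kappa^2/\norm{\Sigma}_{op}+\kappa^2)\log(2/\delta)$, enters: a scalar Bernstein bound on $\langle u,\Sigma_T u\rangle$, with $u$ the top eigenfunction of $\Sigma$, holds with probability $1-\delta$ and is uniformly bounded by Lemma~\ref{lem-sigma-prop}. With that in hand, the $n$ condition gives $\beta_1\le 1/3$ with probability $1-\delta$. Finally, $\beta_1$ is an eigenvalue of the non-symmetric product $C_{T,\lambda}^{-1/2}(C_T-\hat C_T)$. As in RR17, we relate it to the symmetrized form via similarity, since $C_{T,\lambda}^{-1/2}(\cdot)C_{T,\lambda}^{-1/2}$ has the same spectrum after conjugation.

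\emph{Step 3.} Intersect the three events to get probability at least $1-3\delta$, and conclude $\beta<2$ from Steps 1 and 2. I expect the constant bookkeeping in Step 1 and the $\norm{\Sigma_T}_{op}$ lower bound in Step 2 to need the most care.
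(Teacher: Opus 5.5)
Your architecture matches the paper's, which defers to RR17 Lemma 10: a one-sided operator Bernstein bound on $X_j=A-B_j$ for $\beta_2$, an $\omega$-conditional Bernstein bound on the empirical covariance for $\beta_1$, the second $T$-condition to keep $\norm{C_T}_{op}=\norm{\Sigma_T}_{op}$ from collapsing, and a three-way union bound. Your scalar Bernstein bound on $\langle u,\Sigma_T u\rangle$, with $u$ the top eigenfunction of $\Sigma$, is a clean way to get that lower bound. Two things need repair, though. First, the similarity claim at the end of Step 2 is false. With $P=C_{T,\lambda}^{-1/2}$ and $W=C_T-\hat C_T$, $PW$ is similar to $P^{1/2}WP^{1/2}$, while $PWP$ is similar to $P^2W$, and these have different spectra in general. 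The displayed definition of $\beta_1$ is a typo for RR17's $\varsigma_{\max}\bigl(C_{T,\lambda}^{-1/2}(C_T-\hat C_T)C_{T,\lambda}^{-1/2}\bigr)$. That sandwiched form is what the decomposition actually uses, since $C_{T,\lambda}^{1/2}\hat C_{T,\lambda}^{-1}C_{T,\lambda}^{1/2}=(I-Y)^{-1}$ for the sandwich $Y$. Work with it directly and drop the similarity argument.

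Second, this lemma is purely about constants, and yours do not close as written.
\begin{itemize}
\item The intrinsic dimension in the $\beta_1$ Bernstein bound is $\Tr(S')/\norm{S'}_{op}$ for $S'=C_{T,\lambda}^{-1/2}C_TC_{T,\lambda}^{-1/2}$, not $\Tr(C_T)/\norm{C_T}_{op}$. It is at most $(\kappa^2/\lambda)(1+\lambda/\norm{C_T}_{op})$.
\item With only $\norm{C_T}_{op}\ge\norm{\Sigma}_{op}/2$ and $\lambda\le\frac34\norm{\Sigma}_{op}$, this gives the logarithm $\log(5\kappa^2/(\lambda\delta))$. The $n$-condition only budgets $\log(4\kappa^2/(\lambda\delta))$, so $\beta_1\le 1/3$ does not follow.
\item You also use two-sided uniform bounds: $\cF_\infty(\lambda)+1$ in Step 1 and $\kappa^2/\lambda$ in Step 2. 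With those, even matching logarithms only certify $\beta_1,\beta_2<10/27$. For small $\lambda$ the certified bound then tends to $(27/17)^{3/2}\approx 2.002$, which does not give $\beta<2$.
\end{itemize}
The fix is to do what the paper does for $\beta_2$, in both steps. Use the one-sided bounds $\varsigma_{\max}(A-B_j)\le\norm{A}_{op}\le 1$ and $\varsigma_{\max}\bigl(C_{T,\lambda}^{-1/2}(C_T-\phi_T(X_i)\phi_T(X_i)^\top)C_{T,\lambda}^{-1/2}\bigr)\le 1$, which hold because a PSD term is subtracted. Then $\cF_\infty(\lambda)$ and $\kappa^2/\lambda$ enter only through the variance proxies. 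Each of $\beta_1,\beta_2$ is then strictly below $1/3$ when the logarithms match. Even with your $\log 5$ slippage, $\kappa^2/\lambda\ge 4/3$ and $\delta\le 1/3$ give $\log(5\kappa^2/(\lambda\delta))\le 1.09\log(4\kappa^2/(\lambda\delta))$. That yields $\beta_1<0.35$, which keeps the product below $2$.
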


\begin{proof}
The proof proceeds identically to the proof of Lemma 10 in RR17,
bounding \(\beta\) in probability by first bounding \(\beta_1\) and
\(\beta_2\). In bounding \(\beta_1\), the operators are implicitly
defined conditional on \(\omega\), and the bound goes through. For
\(\beta_2\), write the deviation as an average of \(X_j=A-B_j\), as in
the proof of Lemma~\ref{lem-c_lambda}. The one-sided argument used to
bound \(c(\lambda)\) in Lemma~\ref{lem-eff-dim-bnd} gives
\(\varsigma_{\max}(X_j)\leq1\) and
\(\E[X_j^2]\preceq\cF_\infty(\lambda)A\). Applying the one-sided
operator Bernstein inequality therefore gives the required bound on
\(\beta_2\). The two bounds are combined identically, yielding the
stated result.
\end{proof}

\subsubsection{\texorpdfstring{Proof of
Theorem~\ref{thm-rr17-gen}}{Proof of Theorem~}}\label{proof-of-thm-rr17-gen}

We can now derive a generalized generalization bound.

\begin{proof}
From Theorem~\ref{thm-rr17-decomp}, we have the following decomposition:
\[
\norm{\hat g_{\lambda, T, n} - g_0}_X \leq \beta \left(\cS(\lambda, T, n) + \cC(\lambda, T) + R\sqrt{\lambda} \right).
\] We can now apply the lemmas derived to bound each piece in
probability.

\emph{Bounding \(\beta\).} Let \(\tau := \delta/7\). When the following
conditions hold,

\begin{itemize}
\tightlist
\item
  (c1) \(0 < \lambda \leq \frac{3}{4} \norm{\Sigma}_{op}\)
\item
  (c2)
  \(n \geq 18(2 + \kappa^2/\lambda) \log \left(\frac{4 \kappa^2}{\lambda \tau} \right)\)
\item
  (c3)
  \(T \geq 18(2 + \cF_\infty(\lambda)) \log \left( \frac{4 \kappa^2}{\lambda \tau}\right) \vee 32 \left(\frac{\kappa^2}{\norm{\Sigma}_{op}} + \kappa^2 \right) \log (2/\tau)\)
\end{itemize}

by Lemma~\ref{lem-beta}, \(\Pr(\beta < 2) \geq 1-3\tau\).

\emph{Bounding \(\cS(\lambda, T, n)\).} Under the conditions:

\begin{itemize}
\tightlist
\item
  (c4) \(0 < \lambda < \norm{\Sigma}_{op}\)
\item
  (c5)
  \(T \geq (4 + 18 \cF_\infty(\lambda)) \log \left( \frac{12\kappa^2}{\lambda \tau}\right)\),
\end{itemize}

we can directly apply Lemma~\ref{lem-s-bound}, yielding \[
\Pr\left(\cS(\lambda, T, n) \leq  4\left(
    \frac{(B+2R\kappa)\kappa}{n\sqrt{\lambda}} 
    + \sqrt{\frac{(\sigma + R\kappa)^2\cN(\lambda)}{n}}
\right)\log\frac{2}{\tau} \right) \geq 1 - 3\tau .
\]

\emph{Bounding \(\cC(\lambda, T)\).} Finally, under condition (c1) and
the following

\begin{itemize}
\tightlist
\item
  (c6)
  \(T \geq (4 + 18 \cF_\infty(\lambda))\log \left(\frac{8\kappa^2}{\lambda \tau} \right)\),
\end{itemize}

we can apply Lemma~\ref{lem-c-bound}, which gives \[
\Pr\left( \cC(\lambda, T) \leq 2R \sqrt{\frac{\lambda \cF_\infty(\lambda) \log \left(\frac{8\kappa^2}{\lambda \tau} \right)}{T}}\right) \geq 1-\tau.
\] Notice that the stated assumption on \(T\) implies (c2), (c3), (c5),
and (c6), by the definition of \(q_0\) (and that \(q_0\ge 2\)).

Combining these with a union bound, with probability at least
\(1-\delta\), \[
\begin{aligned} 
&\norm{\hat g_{\lambda, T, n} - g_0}_X  \\
\leq&8 \left(
    \frac{(B+2R\kappa)\kappa}{n\sqrt{\lambda}} 
    + \sqrt{\frac{(\sigma + R\kappa)^2\cN(\lambda)}{n}}
\right)\log\frac{14}{\delta} + 2 R \sqrt{\lambda} + 
4R \sqrt{\frac{\lambda \cF_\infty(\lambda) \log \left(\frac{56\kappa^2}{\lambda \delta} \right)}{T}}
\end{aligned} 
\] The assumed bound on \(T\) implies that the final term is at most
\(R\sqrt{\lambda}\). Indeed, if \[
T \ge 18(q_0+ \cF_\infty(\lambda)) \log \left( \frac{84 \kappa^2}{\lambda \delta}\right)
> 16 \cF_\infty(\lambda) \log \left( \frac{56\kappa^2}{\lambda \delta}\right),
\] then \[
4R \sqrt{\frac{\lambda \cF_\infty(\lambda) \log \left( \frac{56\kappa^2}{\lambda \delta}\right)}{T}} \leq R \sqrt{\lambda}.
\] Substituting this into the preceding display gives the stated result.
\end{proof}

\subsection{\texorpdfstring{Proof of
Theorem~\ref{thm-finite-tree-conv}}{Proof of Theorem~}}\label{sec-prf-finite-conv}

\subsubsection{Preliminary results}\label{preliminary-results}

We begin with two key BART-specific lemmas: one that establishes rates
on \(\cN(\lambda)\), and one that shows
\(\cF_\infty(\lambda)\asymp \cN(\lambda)\) when tree depths are bounded
above.

\begin{lemma}[]\protect\hypertarget{lem-eff-dim-bart}{}\label{lem-eff-dim-bart}

If the eigenvalues of an operator \(\Sigma\) satisfy
\(\varsigma_j \asymp j^{-2} \log(j)^{2(p-1)}\), then as \(\lambda\to 0\)
its effective dimension satisfies \[
\cN(\lambda) \asymp \lambda^{-1/2} \log(1/\lambda)^{p-1}.
\]

\end{lemma}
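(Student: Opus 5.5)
The plan is to compute $\cN(\lambda)=\sum_j \varsigma_j/(\varsigma_j+\lambda)$ directly, by splitting the sum at the index where the eigenvalues cross $\lambda$. Since $\varsigma_j\asymp j^{-2}\log(j)^{2(p-1)}$, the map $j\mapsto \varsigma_j$ is (up to constants) regularly varying. Define the threshold $J(\lambda)$ as the solution of $J^{-2}\log(J)^{2(p-1)}=\lambda$. Taking logarithms gives $\log J \asymp \log(1/\lambda)$. Substituting back yields $J\asymp \lambda^{-1/2}\log(J)^{p-1}\asymp \lambda^{-1/2}\log(1/\lambda)^{p-1}$. This $J$ will be the claimed order.

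\emph{Lower bound.} For $j\le cJ$ with a small constant $c$, we have $\varsigma_j\gtrsim \lambda$. For this step I would use that $j^{-2}\log(j)^{2(p-1)}$ is eventually decreasing, and that rescaling $J$ by a constant changes the function only by a constant factor. Each such term then satisfies $\varsigma_j/(\varsigma_j+\lambda)\ge c'>0$, so $\cN(\lambda)\gtrsim J$. The finitely many small $j$, where the logarithm is degenerate and the asymptotic form is meaningless, are absorbed into constants: the statement only concerns $\lambda\to0$, and their number is fixed.

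\emph{Upper bound.} Write the sum as
\[
\cN(\lambda)\le \sum_{j\le J} 1 + \lambda^{-1}\sum_{j>J}\varsigma_j .
\]
The first part is at most $J$. For the tail, $\sum_{j>J} j^{-2}\log(j)^{2(p-1)}\asymp J^{-1}\log(J)^{2(p-1)}$, which I would get by integral comparison, since $\int_J^\infty t^{-2}\log(t)^{k}\,dt\sim J^{-1}\log(J)^k$ via one integration by parts. The tail contribution is therefore $\lambda^{-1}J^{-1}\log(J)^{2(p-1)}$. Using $\lambda = J^{-2}\log(J)^{2(p-1)}$ (up to constants), this reduces to $\asymp J$. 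Hence $\cN(\lambda)\lesssim J$.

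\emph{Main obstacle.} The only delicate point is making the threshold inversion rigorous with two-sided constants. Specifically, I need $\varsigma_j\asymp\lambda$ to hold uniformly for $j$ within constant multiples of $J$, and I need $\log J$ and $\log(1/\lambda)$ to be comparable. I would handle this by working with the explicit monotone envelope $h(t)=t^{-2}\log(t)^{2(p-1)}$ on $t\ge e^{p}$, where $h$ is decreasing. There $h(ct)/h(t)\to c^{-2}$ uniformly, which lets me replace $\asymp$ on eigenvalues by sandwiching between $a\,h$ and $b\,h$ and then invert $h$ directly.
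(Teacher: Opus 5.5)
Your proposal is correct and takes essentially the same route as the paper: split $\cN(\lambda)$ at the index where the eigenvalues cross $\lambda$, bound the head by counting, bound the tail by $\lambda^{-1}\sum_{j>J}\varsigma_j$ using the integral test (incomplete-Gamma asymptotics), and invert $\lambda\asymp J^{-2}\log(J)^{2(p-1)}$ using $\log J\asymp\log(1/\lambda)$. The only difference is cosmetic: the paper uses the actual crossing index $j_\lambda=\max\{j:\varsigma_j\ge\lambda\}$, which makes every head term at least $1/2$ automatically, whereas you use the root of the monotone envelope $h(J)=\lambda$, which costs the constant-factor sandwiching you describe but lets you handle the degenerate small-$j$ terms explicitly.
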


\begin{proof}
Let \(j_\lambda := \max\{j: \varsigma_j \geq \lambda\}\) represent the
index of the last eigenvalue that is larger than or equal to
\(\lambda\); since the eigenvalues are sorted in descending order, as
\(\lambda\) decreases, \(j_\lambda\) increases. We will first establish
that \(\lambda \asymp j_\lambda^{-2} \log(j_\lambda)^{2(p-1)}\). We will
also show that \(\cN(\lambda) \asymp j_\lambda\). Solving the first
result for \(j_\lambda\) as a function of of \(\lambda\) will then
establish the claimed relationship.

First, we substitute \(j_\lambda\) into the assumed eigenvalue decay
rate. This yields \[
c_1 j_\lambda^{-2} \log(j_\lambda)^{2(p-1)} \leq \varsigma_{j_\lambda} \leq C_1 j_\lambda^{-2} \log(j_\lambda)^{2(p-1)}
\] for some constants \(c_1, C_1 > 0\), by the definition of asymptotic
equivalence. Similarly, for \(\varsigma_{j_\lambda + 1}\), there exists
\(c_2, C_2 > 0\) such that \[
c_2 (j_\lambda+1)^{-2} \log(j_\lambda+1)^{2(p-1)} \leq \varsigma_{j_\lambda+1} \leq C_2 (j_\lambda+1)^{-2} \log(j_\lambda+1)^{2(p-1)}.
\] Because
\(\varsigma_{j_\lambda+1} < \lambda \leq \varsigma_{j_\lambda}\), we
have \[
c_2 (j_{\lambda}+1)^{-2} \log(j_\lambda + 1)^{2(p-1)} < \lambda 
\le C_1 j_{\lambda}^{-2} \log(j_\lambda)^{2(p-1)}.
\] As such, \(\lambda \asymp j_\lambda^{-2} \log(j_\lambda)^{2(p-1)}\).

Second, we will show \(\cN(\lambda) \asymp j_\lambda\). To start,
decompse \(\cN(\lambda)\) into two sums: \[
\cN(\lambda) := \Tr((\Sigma + \lambda I)^{-1} \Sigma) 
= \sum_{j=1}^\infty\frac{\varsigma_j}{\varsigma_j + \lambda} 
= \sum_{j \leq j_\lambda} \frac{\varsigma_j}{\varsigma_j + \lambda} 
  + \sum_{j > j_\lambda}  \frac{\varsigma_j}{\varsigma_j + \lambda}.
\] We consider the two sums separately.

\emph{Sum 1: \(j \leq j_\lambda\).} For each \(j\), we must have by
definition that \(\varsigma_j \geq \lambda\). As such,
\(\frac{\varsigma_j}{\varsigma_j + \lambda} \geq 1/2\), and so
\(\sum_{j \leq j_\lambda} \frac{\varsigma_j}{\varsigma_j + \lambda} \geq \frac{1}{2} j_\lambda\).
In the other direction, since
\(\frac{\varsigma_j}{\varsigma_j + \lambda} \leq 1\), we must have
\(\sum_{j \leq j_\lambda} \frac{\varsigma_j}{\varsigma_j + \lambda} \leq j_\lambda\).
We have therefore established that
\begin{equation}\protect\phantomsection\label{eq-sum1}{
\frac{1}{2} j_\lambda \leq \sum_{j \leq j_\lambda} \frac{\varsigma_j}{\varsigma_j + \lambda} \leq j_\lambda \implies \sum_{j \leq j_\lambda} \frac{\varsigma_j}{\varsigma_j + \lambda} \asymp j_\lambda.
}\end{equation}

\emph{Sum 2: \(j > j_\lambda\).} Since
\(\frac{\varsigma_j}{\varsigma_j + \lambda} \leq \frac{\varsigma_j}{\lambda}\),
we can simplify this term to
\begin{equation}\protect\phantomsection\label{eq-sum2-simp}{
\sum_{j > j_\lambda} \frac{\varsigma_j}{\varsigma_j + \lambda} \leq \frac{1}{\lambda} \sum_{j > j_\lambda} \varsigma_j
\lesssim \frac{1}{\lambda} \sum_{j > j_\lambda}  j^{-2} \log(j)^{2(p-1)},
}\end{equation} where the asymptotic inequality follows from the assumed
eigenvalue decay. Because \(j^{-2} \log(j)^{2(p-1)}\) is positive and
monotonically decreasing for sufficiently large \(j\), we can apply the
integral test to the sum and conclude that \[
\begin{aligned}
\sum_{j > j_\lambda} j^{-2} \log(j)^{2(p-1)} 
&\le j_\lambda^{-2} \log(j_\lambda)^{2(p-1)} + \int_{j_\lambda}^\infty  j^{-2} \log(j)^{2(p-1)}\,dj \\
&= j_\lambda^{-2} \log(j_\lambda)^{2(p-1)} + \int_{\log(j_\lambda)}^\infty u^{2(p-1)} e^{-u}\,du \\
&\asymp j_\lambda^{-2} \log(j_\lambda)^{2(p-1)} + \log(j_\lambda)^{2(p-1)} \exp(-\log(j_\lambda)), \\
&= j_\lambda^{-2} \log(j_\lambda)^{2(p-1)} + j_\lambda^{-1} \log(j_\lambda)^{2(p-1)} \\
&\asymp j_\lambda^{-1} \log(j_\lambda)^{2(p-1)},
\end{aligned}
\] where the second line follows from the substitution \(u = \log(j)\)
and the third line arises from noting that the integral is an incomplete
upper Gamma function, which has the stated asymptotic behavior.

Substituting this result into Eq.~\ref{eq-sum2-simp}, and since we have
established \(\lambda \asymp j_\lambda^{-2} \log(j_\lambda)^{2(p-1)}\),
\begin{equation}\protect\phantomsection\label{eq-sum2}{
\sum_{j > j_\lambda} \frac{\varsigma_j}{\varsigma_j + \lambda} 
\leq \frac{1}{\lambda} \sum_{j > j_\lambda} \varsigma_j 
\lesssim \frac{j_\lambda^{-1} \log(j_\lambda)^{2(p-1)}}{j_{\lambda}^{-2} \log(j_\lambda)^{2(p-1)}} 
= j_\lambda.
}\end{equation}

Combining both Eq.~\ref{eq-sum1} and Eq.~\ref{eq-sum2}, we have shown
that \(\cN(\lambda) \asymp j_\lambda\).

Finally, we will solve for \(j_\lambda\): \[
\begin{aligned}
\lambda \asymp j_\lambda^{-2} \log(j_\lambda)^{2(p-1)} 
&\iff j_\lambda \asymp \lambda^{-1/2}\log(j_\lambda)^{p-1} \\
&\iff j_\lambda \asymp \lambda^{-1/2} \log(1/\lambda)^{p-1} \\
&\implies \cN(\lambda) \asymp \lambda^{-1/2} \log(1/\lambda)^{p-1},
\end{aligned}
\] where the second line follows from the fact that
\(\log (j_\lambda) \sim \frac{1}{2}\log(1/\lambda)\), which is itself
established by a comparison of leading terms in the first expression;
indeed, taking logarithms, \[
\log(1/\lambda) = 2\log(j_\lambda) - 2(p-1)\log\log(j_\lambda) + O(1),
\] and \(\log\log(j_\lambda)=o(\log(j_\lambda))\).
\end{proof}

Lemma~\ref{lem-eff-dim-bart} is theoretically significant because it
shows that the effective dimension of \(\Sigma_\b\) depends on the
covariate dimension only logarithmically. Can this same rate be extended
to \(\cF_\infty(\lambda)\)? By definition
\(\cN(\lambda)\le \cF_\infty(\lambda)\), but the reverse inequality
requires stricter conditions on the BART prior.

\begin{lemma}[]\protect\hypertarget{lem-f-inf-bnd}{}\label{lem-f-inf-bnd}

For a regular BART prior with bound depth \(D\leq d^+<\infty\), \[
\cF_\infty(\lambda)\lesssim \cN(\lambda),
\] with the constant uniform in \(\lambda\).

\end{lemma}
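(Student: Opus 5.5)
The plan is to bypass the averaging structure of \(\cN(\lambda)\) and prove directly that \(\sup_\omega\Tr(\Sigma_\lambda^{-1}\Sigma_\omega)\lesssim\lambda^{-1/2}\log(1/\lambda)^{p-1}\). Lemma~\ref{lem-eff-dim-bart} combined with Corollary~\ref{cor-eigen} gives \(\cN(\lambda)\asymp\lambda^{-1/2}\log(1/\lambda)^{p-1}\), so this suffices. Since \(\Sigma_\omega=\sum_{\vb l}\psi_{\vb l}\otimes\psi_{\vb l}\),
\[
\Tr(\Sigma_\lambda^{-1}\Sigma_\omega)=\sum_{\vb l\in\L}\langle\psi_{\vb l},\Sigma_\lambda^{-1}\psi_{\vb l}\rangle_X .
\]
With \(D\le d^+\), there are at most \(2^{d^+}\) leaves, a constant. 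It therefore suffices to bound \(\langle\psi,\Sigma_\lambda^{-1}\psi\rangle_X\) uniformly over all leaf indicators \(\psi\) of all symmetric trees. Each such \(\psi\) is, after intersecting the constraints on a common variable, a product \(\prod_{v=1}^p\ind\{x_v\in I_v\}\) of interval indicators, where some \(I_v\) may equal \([0,1]\).

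\textbf{Step 1: reduce to the Laplacian tensor kernel under the uniform density.} Proposition~\ref{prp-rkhs} gives \(k_\b\succeq cK_\otimes\) in the transformed coordinates. By the argument of Lemma~\ref{lem-coord-meas} (pullback by \(F\) together with bounded density \(q\)), this passes to integral operators as \(\Sigma\succeq c'\,M_{\sqrt q}\Sigma_{K}M_{\sqrt q}\), where \(\Sigma_K\) is the uniform-density operator of \(K_\otimes\). The plan is to push this through the unitary map \(h\mapsto h\circ F\).

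Operator monotonicity of \(t\mapsto-(t+\lambda)^{-1}\) then gives \(\Sigma_\lambda^{-1}\preceq(c'\tilde\Sigma_K+\lambda)^{-1}\). The bounded multiplication by \(\sqrt q\) can be absorbed into constants, either by a further Loewner comparison or by checking that \(\sqrt q\,\psi\) still behaves like a product of interval indicators up to a bounded factor. The composition \(\psi\circ F^{-1}\) remains a product of interval indicators. Hence it suffices to bound \(\langle\psi,(c'\Sigma_K+\lambda)^{-1}\psi\rangle_{L^2([0,1]^p)}\) uniformly over products of interval indicators.

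\textbf{Step 2: spectral computation.} \(\Sigma_K\) is the tensor product of the one-dimensional Laplacian operator \(L\) on \([0,1]\), with eigenpairs \((\rho_k,e_k)\), \(\rho_k\asymp k^{-2}\), and explicit trigonometric eigenfunctions. The key one-dimensional estimate is
\[
\langle\ind_I,e_k\rangle^2\lesssim\min\{1,k^{-2}\}\quad\text{uniformly in the interval }I\subseteq[0,1],
\]
which follows by integrating a sinusoid of frequency \(\asymp k\) over \(I\) (a boundary-term calculation). Writing \(\psi=\prod_v\ind_{I_v}\) then gives
\[
\langle\psi,(c'\Sigma_K+\lambda)^{-1}\psi\rangle\lesssim\sum_{\vb k\in\N^p}\frac{\prod_v k_v^{-2}}{c''\prod_v k_v^{-2}+\lambda}.
\]
This is exactly \(\cN\) for an operator with eigenvalues given by the products \(\prod_v k_v^{-2}\). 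By the counting argument in the proof of Corollary~\ref{cor-eigen}, those eigenvalues reorder to \(\asymp j^{-2}\log^{2(p-1)}j\). Lemma~\ref{lem-eff-dim-bart} then bounds the sum by \(\lesssim\lambda^{-1/2}\log(1/\lambda)^{p-1}\), uniformly in \(\psi\).

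\textbf{Step 3: conclude.} Multiply by \(2^{d^+}\) and compare with \(\cN(\lambda)\asymp\lambda^{-1/2}\log(1/\lambda)^{p-1}\). This bound holds only as \(\lambda\to0\). For \(\lambda\) bounded away from zero, both \(\cF_\infty(\lambda)\le1/\lambda\) and \(\cN(\lambda)\ge1/2\) hold on the range \(\lambda\le\norm{\Sigma}_{op}\), which supplies the uniform constant there.

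The main obstacle is Step 2, specifically making the coefficient bound uniform over the endpoints of \(I\), including intervals touching \(0\) or \(1\). Handling the density factor \(q\) rigorously is a secondary difficulty. If the explicit eigenfunction route is messy, the fallback is a Fourier argument on \(\R\) with the stationary extension \(\bar L\): there \(|\widehat{\ind_I}(\omega)|^2\lesssim\min\{1,\omega^{-2}\}\) is immediate, and one compares \((\Sigma_K+\lambda)^{-1}\) restricted to \([0,1]^p\) with the whole-space resolvent.
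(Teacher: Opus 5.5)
Your proposal is correct and is essentially the paper's proof. You split $\Tr(\Sigma_\lambda^{-1}\Sigma_\omega)$ into at most $2^{d^+}$ leaf quadratic forms, use the lower sandwich $\Sigma\succeq c\,\Sigma_L^{\otimes p}$ with resolvent monotonicity, and bound the squared interval integrals of the explicit trigonometric Laplacian eigenfunctions by the eigenvalues, tensorized over coordinates. There are two small differences. The paper finishes with the direct comparison $\Tr\{(c\Sigma_L^{\otimes p}+\lambda I)^{-1}c\Sigma_L^{\otimes p}\}\le\Tr\{(\Sigma+\lambda I)^{-1}\Sigma\}$ (by monotonicity of $t\mapsto t/(t+\lambda)$), which makes your detour through the eigenvalue rates, and your separate treatment of non-small $\lambda$, unnecessary. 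Conversely, your explicit handling of the density factor $\sqrt q$, which in effect only replaces $\lambda$ by $\lambda/\sup q$, is more careful than the paper's one-line reduction to uniform coordinates.
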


\begin{proof}
By Lemma~\ref{lem-coord-meas}, it suffices to work with uniform split
and covariate distributions on \([0,1]^p\). We can also absorb the fixed
rescaling \(\Sigma_\b=\sigma_\mu^2\Sigma\) and write \(\Sigma\) for the
population operator on \(L^2([0,1]^p)\).

Let \(L(s,t)=\exp(-\left|s-t\right|)\) be the Laplacian kernel and
\(K_\otimes(\bu,\bu')=\prod_{v=1}^pL(u_v,u_v')\), and let \(\Sigma_L\)
and \(\Sigma_L^{\otimes p}\) denote the corresponding integral
operators. Regularity gives \(\Pr(D\geq p)>0\), so the bounded-depth
condition necessarily has \(d^+\geq p\). The lower kernel bound in
Proposition~\ref{prp-rkhs} therefore implies
\(\Sigma\succeq c\Sigma_L^{\otimes p}\) for some \(c>0\). We next record
a property of \(\Sigma_L\). If \((\varsigma_j,e_j)\) is one of its
normalized eigenpairs (that is, \(\Sigma_Le_j=\varsigma_je_j\) and
\(\norm{e_j}_{L^2([0,1])}=1\)), then
\begin{equation}\protect\phantomsection\label{eq-interval-eigen-bnd}{
\left|\int_I e_j(s)\,ds\right|^2\lesssim\varsigma_j
}\end{equation} for every interval \(I\subseteq[0,1]\) and every \(j\),
with a common implicit constant. Indeed, the eigenpairs of the
exponential covariance kernel on a symmetric interval \([-b,b]\) are
known explicitly \citep[see][Example 4.1]{xiu2010numerical}: \[
\varsigma_j=\frac{2}{1+\omega_j^2},
\qquad
e_j(s)=c_j\cos\{\omega_j(s-1/2)\}
\quad\text{or}\quad
e_j(s)=c_j\sin\{\omega_j(s-1/2)\},
\] where the explicit normalizing constants \(c_j\) are bounded above.
Moreover, \(\Tr(\Sigma_L)=\int_0^1L(s,s)\,ds=1\), so
\(\varsigma_j\leq1\) and the eigenvalue formula gives \(\omega_j\geq1\).
Consequently, integration over any interval \(I\subseteq[0,1]\) gives \[
\left|\int_Ie_j(s)\,ds\right|
\leq\frac{2\sup_j|c_j|}{\omega_j};
\] squaring establishes \[
\left|\int_Ie_j(s)\,ds\right|^2
\lesssim \frac{1}{\omega_j^2}\leq\frac{2}{1+\omega_j^2}=\varsigma_j,
\] as claimed.

Now, the eigenfunctions of \(\Sigma_L^{\otimes p}\) are the tensor
products \(e_{\vb j}(\bu)=\prod_{v=1}^p e_{j_v}(u_v)\), with eigenvalues
\(\varsigma_{\vb j}=\prod_{v=1}^p\varsigma_{j_v}\). Thus
Eq.~\ref{eq-interval-eigen-bnd} implies, for every rectangular block
\(R=\prod_{v=1}^p I_v\), with \(\psi_R:=\ind\{\bu\in R\}\), \[
\left|\langle \psi_R,e_{\vb j}\rangle\right|^2
=\prod_{v=1}^p\left|\int_{I_v}e_{j_v}(s)\,ds\right|^2
\lesssim \varsigma_{\vb j}.
\] Consequently, \[
\begin{aligned}
\langle \psi_R,(c\Sigma_L^{\otimes p}+\lambda I)^{-1}\psi_R\rangle
&\lesssim \sum_{\vb j}\frac{\varsigma_{\vb j}}{c\varsigma_{\vb j}+\lambda}\\
&=\frac{1}{c}\Tr\{(c\Sigma_L^{\otimes p}+\lambda I)^{-1}c\Sigma_L^{\otimes p}\}.
\end{aligned}
\]

Since \(\Sigma\succeq c\Sigma_L^{\otimes p}\), the preceding display
gives
\begin{equation}\protect\phantomsection\label{eq-rectangle-leverage}{
\begin{aligned}
\langle\psi_R,(\Sigma+\lambda I)^{-1}\psi_R\rangle
&\le \langle\psi_R,(c\Sigma_L^{\otimes p}+\lambda I)^{-1}\psi_R\rangle\\
&\lesssim \Tr\{(c\Sigma_L^{\otimes p}+\lambda I)^{-1}c\Sigma_L^{\otimes p}\} \\
&\lesssim \Tr\{(\Sigma+\lambda I)^{-1}\Sigma\}=\cN(\lambda),
\end{aligned}
}\end{equation} where the last line follows since the map
\(t\mapsto t/(t+\lambda)\) preserves the Loewner order.

Finally, for a tree given by a draw \(\omega\), every leaf indicator is
the indicator of a rectangular block, and so applying the definition of
\(\Sigma_\omega\), \[
\Tr(\Sigma_\lambda^{-1}\Sigma_\omega)
=\sum_{\vb l\in\L(\omega)}
\langle\psi_{\vb l},\Sigma_\lambda^{-1}\psi_{\vb l}\rangle.
\] Because \(D\leq d^+\), the tree has at most \(2^{d^+}\) leaves.
Applying Eq.~\ref{eq-rectangle-leverage} to each leaf and taking the
supremum over \(\omega\) proves \[
\cF_\infty(\lambda)\lesssim 2^{d^+}\cN(\lambda)\lesssim\cN(\lambda). \qedhere
\]
\end{proof}

We can now combine the derived effective dimension in
Lemma~\ref{lem-eff-dim-bart} with the generalized
Theorem~\ref{thm-rr17-gen} above to specialize the latter to our case.

\begin{lemma}[]\protect\hypertarget{lem-prob-bound}{}\label{lem-prob-bound}

Under the stated assumptions on the data, suppose the BART prior is such
that the effective dimension of its corresponding kernel operator
satisfies \(\cN(\lambda)\asymp \lambda^{-1/2}\log(1/\lambda)^{p-1}\) as
\(\lambda\to 0\). Suppose further that
\(\lambda_n\asymp n^{-2/3}\log(n)^{2(p-1)/3}\). There exist constants
\(c_0,c_1,n_0>0\), not depending on \(n\) or \(\delta\), such that, for
any \(\delta\in(0,1/2]\), if
\begin{equation}\protect\phantomsection\label{eq-n-bound}{
n\geq n_0\vee
c_0\log\left(\frac{1}{\delta}\right)^3
}\end{equation} and
\begin{equation}\protect\phantomsection\label{eq-M-bound}{
T_n\geq
c_1\cF_\infty(\lambda_n)\log\left(\frac{n}{\delta}\right),
}\end{equation} then, with probability at least \(1-\delta\), \[
\norm{\hat g_{T,n}-g_0}_X
\lesssim
n^{-1/3}\log(n)^{(p-1)/3}
\log\left(\frac{1}{\delta}\right).
\]

\end{lemma}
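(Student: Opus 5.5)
The plan is to plug $\lambda=\lambda_n$ into Theorem~\ref{thm-rr17-gen} (with $\kappa=1$), check its two sample-size hypotheses, and then evaluate the right-hand side of Eq.~\ref{eq-rudi-bound} using the assumed rate for $\cN(\lambda)$.

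First, the hypotheses on $\lambda$. Since $\lambda_n\to0$, there is an $n_0$ such that $\lambda_n\le\frac34\norm{\Sigma}_{op}$ for all $n\ge n_0$. Enlarging $n_0$ if needed, the asymptotic relation $\cN(\lambda_n)\asymp\lambda_n^{-1/2}\log(1/\lambda_n)^{p-1}$ is also in force.

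Next, the condition on $n$ (Eq.~\ref{eq-n-bound-rudi}). It requires $n\gtrsim\lambda_n^{-1}\log(28/(\lambda_n\delta))$. Here $\lambda_n^{-1}\asymp n^{2/3}\log(n)^{-2(p-1)/3}$ and $\log(1/\lambda_n)\lesssim\log n$, so the requirement is implied by
\[
n \gtrsim n^{2/3}\log(n)^{-2(p-1)/3}\bigl(\log n+\log(1/\delta)\bigr).
\]
The $\log n$ piece is absorbed by taking $n\ge n_0$, since $n^{1/3}$ eventually dominates any power of $\log n$. For the $\log(1/\delta)$ piece we need $n^{1/3}\gtrsim\log(1/\delta)$, and this is exactly what $n\ge c_0\log(1/\delta)^3$ supplies. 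For $p\ge1$ the negative power of $\log n$ only helps.

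Then the condition on $T$ (Eq.~\ref{eq-M-bound-rudi}). It asks for $T\ge18(q_0+\cF_\infty(\lambda_n))\log(84/(\lambda_n\delta))$. Since $\cF_\infty(\lambda)\ge\cN(\lambda)\to\infty$, eventually $q_0\le\cF_\infty(\lambda_n)$. Also $\log(84/(\lambda_n\delta))\lesssim\log(n/\delta)$, because $\lambda_n^{-1}\lesssim n$. Hence Eq.~\ref{eq-M-bound} with a suitably large $c_1$ implies Eq.~\ref{eq-M-bound-rudi}.

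Finally, evaluate the three terms of Eq.~\ref{eq-rudi-bound}, using $\log(14/\delta)\lesssim\log(1/\delta)$ since $\delta\le1/2$.
\begin{itemize}
\item The bias term is $3R\sqrt{\lambda_n}\asymp n^{-1/3}\log(n)^{(p-1)/3}$, already at the target rate.
\item The variance term is $\sqrt{\cN(\lambda_n)/n}$. With $\lambda_n^{-1/2}\asymp n^{1/3}\log(n)^{-(p-1)/3}$ and $\log(1/\lambda_n)\asymp\log n$,
\[
\cN(\lambda_n)\asymp n^{1/3}\log(n)^{-(p-1)/3+(p-1)}=n^{1/3}\log(n)^{2(p-1)/3},
\]
so $\sqrt{\cN(\lambda_n)/n}\asymp n^{-1/3}\log(n)^{(p-1)/3}$, matching the bias. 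This is the point where the choice of $\lambda_n$ balances the two terms.
\item The remaining term is $1/(\sqrt{\lambda_n}\,n)\asymp n^{-2/3}\log(n)^{-(p-1)/3}$, which is of smaller order.
\end{itemize}
Multiplying the first two by $\log(1/\delta)$ (and noting $\log(1/\delta)\ge\log 2$, so the bias term can also be written with this factor) gives the stated bound.

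I expect the main obstacle to be bookkeeping rather than anything conceptual. One issue is that the $\log(1/\delta)$ dependence must be handled uniformly, so that $n_0,c_0,c_1$ do not depend on $\delta$; this is why the $n$-condition is split into the $\log n$ and $\log(1/\delta)$ parts above. The other is making sure the asymptotic equivalences for $\cN(\lambda_n)$ and $\log(1/\lambda_n)$ hold for all $n\ge n_0$, which fixes the constant $n_0$.
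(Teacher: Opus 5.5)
Your proposal is correct and follows essentially the same route as the paper's proof. The paper likewise checks the hypotheses of Theorem~\ref{thm-rr17-gen} at $\lambda=\lambda_n$, absorbing $q_0$ via $\cF_\infty(\lambda_n)\ge\cN(\lambda_n)\to\infty$ and splitting the sample-size condition into a $\log n$ part handled by $n_0$ and a $\log(1/\delta)$ part handled by $n\ge c_0\log(1/\delta)^3$, and then substitutes $\cN(\lambda_n)\asymp n^{1/3}\log(n)^{2(p-1)/3}$ into Eq.~\ref{eq-rudi-bound}; the only detail you leave implicit is that the fixed factor $\sigma_\mu^2$ in the kernel operator is absorbed into $c_0,c_1,n_0$.
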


\begin{proof}
We will start by showing that the assumed bound on \(T_n\) implies
Eq.~\ref{eq-M-bound-rudi} in Theorem~\ref{thm-rr17-gen}, and then show
that the assumed bound on \(n\) implies Eq.~\ref{eq-n-bound-rudi}. Those
equations, plus the assumption on \(\lambda_n\), are sufficient to apply
Theorem~\ref{thm-rr17-gen} above. By Lemma~\ref{lem-coord-meas} above,
the fixed factor \(\sigma_\mu^2\) in \(\Sigma_\b\) may be absorbed into
\(c_0\), \(c_1\), and \(n_0\), and we therefore proceed without
considering any constant scaling of \(\Sigma\) or \(\lambda_n\).

We first verify the tree-count condition. Since \[
\cF_\infty(\lambda_n) \geq\cN(\lambda_n) \longrightarrow\infty
\] as \(n\to\infty\), where the convergence follows from
\(\lambda_n\to0\) and the assumed effective-dimension rate, the fixed
term \(q_0\) may be absorbed into \(\cF_\infty(\lambda_n)\). Then
substituting the assumed order of \(\lambda_n\) into
Eq.~\ref{eq-M-bound-rudi}, \[
\begin{aligned}
18(q_0+\cF_\infty(\lambda_n)) \log\left(\frac{84}{\lambda_n\delta}\right)
&\lesssim \cF_\infty(\lambda_n) \left(\log(84)
    - \log(n^{-2/3}\log(n)^{2(p-1)/3}) - \log(\delta) \right)\\
&= \cF_\infty(\lambda_n) \left(\frac{2}{3}\log(n) - \frac{2(p-1)}{3}\log\log(n)
    + \log(1/\delta) + \log(84) \right)\\
&\lesssim \cF_\infty(\lambda_n) (\log(n)+\log(1/\delta)) \\
&= \cF_\infty(\lambda_n) \log\left(\frac{n}{\delta}\right).
\end{aligned}
\] The right-hand side is bounded by Eq.~\ref{eq-M-bound} by the choice
of \(c_1\). Thus Eq.~\ref{eq-M-bound} implies Eq.~\ref{eq-M-bound-rudi}
in Theorem~\ref{thm-rr17-gen}.

Next consider the sample-size condition. Substituting \(\lambda_n\) into
the right-hand side of Eq.~\ref{eq-n-bound-rudi}, simplifying as above,
and expanding its logarithm gives \[
\begin{aligned}
18(2+\lambda_n^{-1}) \log\left(\frac{28}{\lambda_n\delta}\right)
&\lesssim n^{2/3}\log(n)^{-2(p-1)/3}
    \log\left(\frac{28 n^{2/3}}{\log(n)^{2(p-1)/3}\delta}\right)\\
&= n^{2/3}\log(n)^{-2(p-1)/3} \left(\log(28)+\frac23\log(n)
    -\frac{2(p-1)}{3}\log\log(n) +\log(1/\delta) \right) \\
&\lesssim n^{2/3}\log(n)^{-2(p-1)/3} \left(\log(n)+\log(1/\delta)\right).
\end{aligned}
\] Let \(C\) denote the fixed constant implicit in the last inequality.
Choose \(n_0\) large enough that, for every \(n\geq n_0>1\),
\(0<\lambda_n\leq\frac34\norm{\Sigma}_{op}\),
\(\cF_\infty(\lambda_n)\geq1\), and \[
C\log(n)\leq \frac12n^{1/3}\log(n)^{2(p-1)/3}.
\] Such a choice is possible because \(\lambda_n\to0\) and the ratio of
the right-hand side to \(\log(n)\) diverges. Choose \(c_0\geq(2C)^3\).
Now Eq.~\ref{eq-n-bound} gives
\(C\log(1/\delta)\leq\frac12n^{1/3}\leq \frac12n^{1/3}\log(n)^{2(p-1)/3}\).
It follows that \[
\begin{aligned}
18(2+\lambda_n^{-1}) \log\left(\frac{28}{\lambda_n\delta}\right)
&\leq C n^{2/3}\log(n)^{-2(p-1)/3} \left(\log(n)+\log(1/\delta)\right)\\
&\leq n^{2/3}\log(n)^{-2(p-1)/3} \left(n^{1/3}\log(n)^{2(p-1)/3}\right)\\
&= n.
\end{aligned}
\] Thus Eq.~\ref{eq-n-bound} implies Eq.~\ref{eq-n-bound-rudi} in
Theorem~\ref{thm-rr17-gen}. It remains to calculate the bound from
Theorem~\ref{thm-rr17-gen}. The assumed effective-dimension rate and the
choice of penalty give \[
\begin{aligned}
\cN(\lambda_n)
&\asymp \lambda_n^{-1/2}\log(1/\lambda_n)^{p-1}\\
&\asymp n^{1/3}\log(n)^{-(p-1)/3}
    \left( \frac23\log(n) -\frac{2(p-1)}{3}\log\log(n) +O(1) \right)^{p-1}\\
&\asymp n^{1/3}\log(n)^{-(p-1)/3}\log(n)^{p-1}\\
&= n^{1/3}\log(n)^{2(p-1)/3}.
\end{aligned}
\] Here the \(O(1)\) term accounts for the fixed constants in the
assumed order of \(\lambda_n\).

Finally, substituting \(\lambda_n\) and the preceding order of
\(\cN(\lambda_n)\) into Eq.~\ref{eq-rudi-bound}, and using \(\kappa=1\),
gives \[
\begin{aligned}
\norm{\hat g_{T,n}-g_0}_X
&\leq 8\left( \frac{B+2R}{n\sqrt{\lambda_n}}
    +(\sigma+R)\sqrt{\frac{\cN(\lambda_n)}{n}} \right)
    \log\left(\frac{14}{\delta}\right) +3R\sqrt{\lambda_n}\\
&\lesssim \left( \frac{1}{n\left(n^{-2/3}\log(n)^{2(p-1)/3}\right)^{1/2}}
    + \sqrt{\frac{n^{1/3}\log(n)^{2(p-1)/3}}{n}} \right)
    \log\left(\frac{14}{\delta}\right)\\
  &\qquad + \left(n^{-2/3}\log(n)^{2(p-1)/3} \right)^{1/2}\\
&\lesssim \left( n^{-2/3}\log(n)^{-(p-1)/3}
    +n^{-1/3}\log(n)^{(p-1)/3} \right)
    \log\left(\frac{14}{\delta}\right)\\
  &\qquad + n^{-1/3}\log(n)^{(p-1)/3}\\
&\lesssim n^{-1/3}\log(n)^{(p-1)/3} \log\left(\frac{1}{\delta}\right),
\end{aligned}
\] as claimed, where the last line uses \(\delta\leq1/2\).
\end{proof}

\subsubsection{Main result}\label{main-result}

Finally, we can integrate over \(\delta\) in Lemma~\ref{lem-prob-bound}
to prove Theorem~\ref{thm-finite-tree-conv}.

\begin{proof}[Proof of Theorem~\ref{thm-finite-tree-conv}]
Let \(a_n:=n^{-2/3}\log(n)^{2(p-1)/3}\) and
\(Z_n:=\norm{\hat g_{T,n}-g_0}_X^2\), so \(\lambda_n\asymp a_n\), and we
aim to prove \(\E[Z_n]\lesssim a_n\).

We first identify a range of confidence levels on which
Lemma~\ref{lem-prob-bound} applies uniformly. Fix \(K>8/3\) and let
\(\delta_*:=n^{-K}\). For every \(\delta\in[\delta_*,1/2]\), \[
\log\left(\frac{n}{\delta}\right)
\leq\log(n^{K+1})
=(K+1)\log(n).
\] Consequently, the tree-count condition in Lemma~\ref{lem-prob-bound}
holds throughout this range whenever
\begin{equation}\protect\phantomsection\label{eq-M-bound-expectation}{
T_n\geq c_1(K+1)\cF_\infty(\lambda_n)\log(n).
}\end{equation} This is the only condition on \(T_n\) used in the
remainder of the proof. The general bound
\(\cF_\infty(\lambda_n)\leq\lambda_n^{-1}\) gives \[
\begin{aligned}
\cF_\infty(\lambda_n)\log(n)
&\leq\lambda_n^{-1}\log(n)\\
&\lesssim
n^{2/3}\log(n)^{-2(p-1)/3}\log(n)\\
&=
n^{2/3}\log(n)^{1-2(p-1)/3}.
\end{aligned}
\] Therefore the assumed lower bound on \(T_n\), with \(c_T\)
sufficiently large, implies Eq.~\ref{eq-M-bound-expectation} in
Lemma~\ref{lem-prob-bound}. If instead \(D\leq d^+<\infty\),
Lemma~\ref{lem-f-inf-bnd} and Lemma~\ref{lem-eff-dim-bart} give \[
\begin{aligned}
\cF_\infty(\lambda_n)\log(n)
&\lesssim \cN(\lambda_n)\log(n)\\
&\asymp \lambda_n^{-1/2}\log(1/\lambda_n)^{p-1}\log(n)\\
&= (n^{-2/3}\log(n)^{2(p-1)/3})^{-1/2}\left(
    \frac{2}{3}\log(n) - \frac{2(p-1)}{3}\log\log(n) + O(1)\right)^{p-1} \log(n) \\
&\asymp n^{1/3}\log(n)^{-(p-1)/3}\log(n)^{p-1}\log(n)\\
&=n^{1/3}\log(n)^{1+2(p-1)/3},
\end{aligned}
\] so the lower bound on \(T_n\) in the second statement likewise
implies Eq.~\ref{eq-M-bound-expectation}.

The sample-size condition in Lemma~\ref{lem-prob-bound} also holds
uniformly over \(\delta\in[\delta_*,1/2]\) for all sufficiently large
\(n\). Indeed, \[
c_0\log(1/\delta)^3 \le c_0K^3\log(n)^3 =o(n).
\] Hence there is a fixed \(n_1\) such that \(n\geq c_0K^3\log(n)^3\)
for every \(n\geq n_1\). We define \(n_0\) for
Theorem~\ref{thm-finite-tree-conv} to be \(\max\{n_1, n_0'\}\), where
\(n_0'\) is the fixed \(n_0\) in Lemma~\ref{lem-prob-bound}.

By Lemma~\ref{lem-prob-bound}, there is a fixed \(C_2>0\) such that \[
\Pr\left(Z_n>C_2a_n\log(1/\delta)^2 \right) \leq\delta
\] for every \(\delta\in[n^{-K},1/2]\). Now change variables from
\(\delta\) to \(t:=C_2a_n\log(1/\delta)^2\), which has inverse \[
\delta(t) := \exp\left(-\sqrt{\frac{t}{C_2a_n}}\right).
\] Under this change of variables, the endpoints \(\delta=1/2\) and
\(\delta=n^{-K}\) become, respectively, \[
t_0:=C_2a_n\log(2)^2 \qand t_*:=C_2a_nK^2\log(n)^2.
\] Thus, for every \(t\in[t_0,t_*]\), the probability bound becomes
\begin{equation}\protect\phantomsection\label{eq-Z-tail-t}{
\Pr(Z_n>t) \le \exp\left(-\sqrt{\frac{t}{C_2a_n}}\right).
}\end{equation}

We can now integrate the tail probability, first up to \(t_*\) using
only Lemma~\ref{lem-prob-bound}, and then past \(t_*\) additionaly
relying on a separate moment bound based on sub-exponentiality. First,
\[
\begin{aligned}
\E[\min\{Z_n, t_*\}] &=\int_0^{t_*}\Pr(Z_n>t)\,dt \\
&\le t_0 + \int_{t_0}^{t_*} \exp\left(-\sqrt{\frac{t}{C_2a_n}}\right)\,dt \\
&\le t_0 + \int_0^\infty \exp\left(-\sqrt{\frac{t}{C_2a_n}}\right)\,dt.
\end{aligned}
\] In the final integral, make the substitution \(s=\sqrt{t/(C_2a_n)}\),
so that \(t=C_2a_ns^2\) and \(dt=2C_2a_ns\,ds\). Then \[
\begin{aligned}
\int_0^\infty \exp\left(-\sqrt{\frac{t}{C_2a_n}}\right)\,dt
&= 2C_2a_n\int_0^\infty se^{-s}\,ds \\
&= 2C_2a_n.
\end{aligned}
\] Since \(t_0\asymp a_n\), we have
\begin{equation}\protect\phantomsection\label{eq-Z-truncated}{
\E[\min\{Z_n, t_*\}]\lesssim a_n.
}\end{equation}

It remains to control the tail probability above \(t_*\). At the
minimizing \(\hat\beta\), the ridge objective clearly upper bounds the
penalty term alone. At the value \(\beta=0\), the ridge objective
simplifies to \(\E_n[Y^2]\), where \(\E_n\) is the empirical mean, and
must be larger than the objective at \(\hat\beta\). Thus \[
\frac{\lambda_n}{\sigma_\mu^2}\norm{\hat\beta}_2^2 \leq \E_n[Y_i^2].
\] By Cauchy--Schwarz, \[
\begin{aligned}
\norm{\hat g_{T,n}}_X^2
&=\E_X\left[ (\phi_T(X)^\top\hat\beta)^2 \right]\\
&\leq \E_X\left[\norm{\phi_T}_2^2\right]\norm{\hat\beta}_2^2\\
&\leq \frac{\sigma_\mu^2}{\lambda_n} \E_n[Y_i^2],
\end{aligned}
\] where in the last line we have also used that \(\norm{\phi_T}_2=1\),
since for every \(\vb x\), exactly one leaf indicator in each tree is
active. Therefore \[
Z_n \leq 2\norm{\hat g_{T,n}}_X^2+2\norm{g_0}_X^2
\leq \frac{2\sigma_\mu^2}{\lambda_n} \E_n[Y_i^2] +2\norm{g_0}_X^2.
\] Because the regression noise is sub-exponential, \(\E[Y_i^2]\) and
\(\E[Y_i^4]\) are uniformly bounded, and so \(\E[\E_n[Y_i^2]^2]\) is
also uniformly bounded. Squaring the bound on \(Z_n\) and using
\(\lambda_n\to0\) then gives
\begin{equation}\protect\phantomsection\label{eq-Z-moment}{
\E[Z_n^2]\lesssim\lambda_n^{-2}.
}\end{equation}

Now, at the upper endpoint, Eq.~\ref{eq-Z-tail-t} gives \[
\begin{aligned}
\Pr(Z_n>t_*)
&\leq \exp\left(-\sqrt{\frac{C_2a_nK^2\log(n)^2}{C_2a_n}}\right) \\
&= \exp(-K\log(n)) \\
&= n^{-K}.
\end{aligned}
\] By Cauchy--Schwarz and Eq.~\ref{eq-Z-moment}, \[
\begin{aligned}
\E[\max\{Z_n-t_*, 0\}]
&\le \E[Z_n\ind\{Z_n>t_*\}]\\
&\le \E[Z_n^2]^{1/2}\Pr(Z_n>t_*)^{1/2}\\
&\lesssim \lambda_n^{-1}n^{-K/2}.
\end{aligned}
\] Comparing this rate to \(a_n\), we find \[
\begin{aligned}
\frac{\lambda_n^{-1}n^{-K/2}}{a_n}
&\asymp \frac{n^{2/3}\log(n)^{-2(p-1)/3}n^{-K/2}}{
    n^{-2/3}\log(n)^{2(p-1)/3}}\\
&= n^{4/3-K/2}\log(n)^{-4(p-1)/3}\\
&\to 0,
\end{aligned}
\] because \(K>8/3\). Thus \(\E[\max\{Z_n-t_*, 0\}]\lesssim a_n\).
Combining this with Eq.~\ref{eq-Z-truncated} yields \[
\E[Z_n]
=
\E[\min\{Z_n, t_*\}]+\E[(Z_n-t_*)_+]
\lesssim a_n,
\] as claimed.
\end{proof}

\newpage{}

\section{Additional simulation
results}\label{additional-simulation-results}

\subsection{Uncertainty
quantification}\label{uncertainty-quantification-1}

Figure~\ref{fig-uncert-app} plots results from the same experiment as in
Figure~\ref{fig-uncert}, but varying the sample size and residual
standard deviation. Both the coverage and width of credible intervals
are plotted for the regression function, which as in the main text is
drawn from a \(\GP(0, k_\b)\) prior. While proper coverage requires
better tuning of the BART hyperparameters, the coverage and width of the
intervals under the random BART features models closely track those of
the full BART model.

\begin{figure}

\centering{

\pandocbounded{\includegraphics[keepaspectratio]{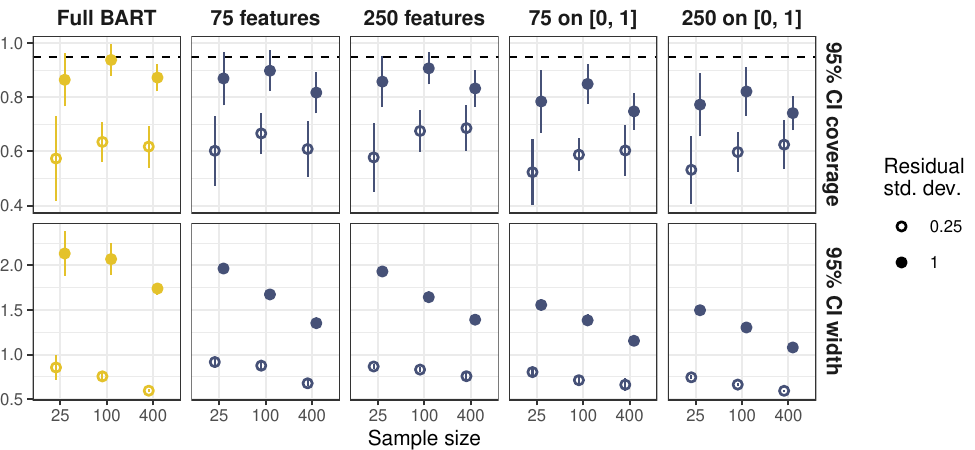}}

}

\caption{\label{fig-uncert-app}\textbf{Coverage and interval width.} For
full BART and the same four variations of random BART features as in the
main text, we plot the coverage and width of 95\% credible intervals of
the regression function, which is drawn from a \(\GP(0, k_\b)\) prior in
reality. Results are plotted for three sample sizes and two different
residual standard deviations.}

\end{figure}%

\end{document}